\def\marginpar#1{\ignorespaces}
\DeclareMathOperator\erf{erf}
\DeclareMathOperator\sgn{sgn}
\DeclareMathOperator\Var{Var}
\DeclareMathOperator\argmin{argmin}
\begin{document}

%%%%%%%%%%%%%%%%%%%%%%%%%%%%%%%%%%%%%%%%%%%%%%%%%%%%%%%%%%%%%%%%%%%
%%                                                               %%
%% No need for \maketitle.                                       %%
%%                                                               %%
%%%%%%%%%%%%%%%%%%%%%%%%%%%%%%%%%%%%%%%%%%%%%%%%%%%%%%%%%%%%%%%%%%%

%%%%%%%%%%%%%%%%%%%%%%%%%%%%%%%%%%%%%%%%%%%%%%%%%%%%%%%%%%%%%%%%%%%
%%                                                               %%
%% Please replace what follows by the body of your article       %%
%% (up to the bibliography):                                     %%
%%                                                               %%
%%%%%%%%%%%%%%%%%%%%%%%%%%%%%%%%%%%%%%%%%%%%%%%%%%%%%%%%%%%%%%%%%%%
%-------------------------------------------------------------------------------------------------
%-----------------------------------------------------------------------------------------------------------------------------------------------------------------------------------------
\section{Introduction and main results}
In this paper we are interested in the {\em argmin process} $(\alpha_t; t \geq 0)$ of standard Brownian motion $(B_t; t \geq 0)$. That is,

\begin{equation}
\label{argminBM}
\alpha_t:=\sup\left\{s \in [0,1]: B_{t+s}=\inf_{u \in [0,1]}B_{t+u} \right\} \quad \mbox{for all}~t \geq 0.
\end{equation}
For each $t \geq 0$, $t+\alpha_t$ is the last time at which the minimum of $B$ on $[t,t+1]$ is achieved.
The argmin process $\alpha$ is c\`adl\`ag, and takes values in $[0,1]$. 
Except possible upward jumps, the process $(\alpha_t; t \geq 0)$ drifts down at unit speed. So $(\alpha_t; t \geq 0)$ can be interpreted as a storage process \cite{CP1,CP2}. See also \cite{Cinlar, BRT, EvansPitman} for other examples of storage processes. The argmin process $\alpha$ also appeared as the hydrodynamic limit of a surface growth model \cite{BF}.

\begin{figure}[ht]
\begin{center}
\includegraphics[width=0.70 \textwidth]{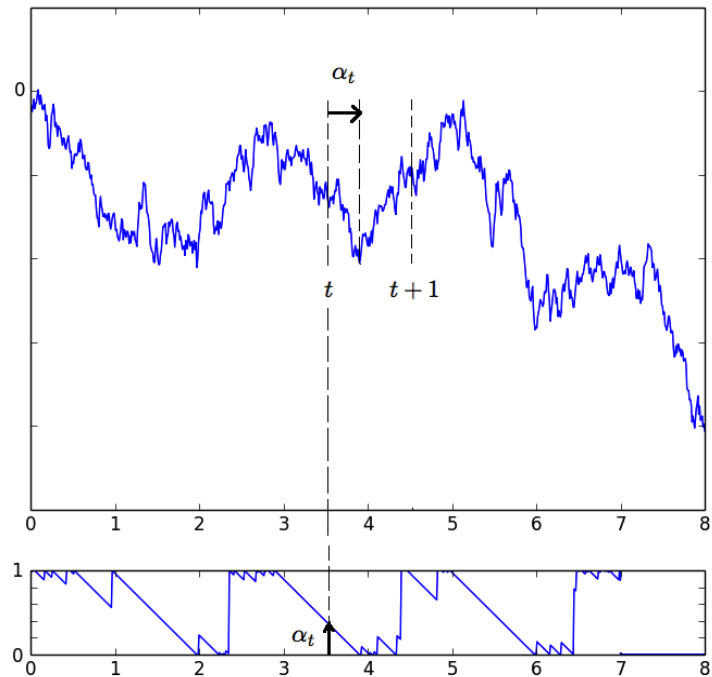}
\end{center}
\caption{TOP: The argmin process embedded in Brownian motion. BOTTOM: The argmin process corresponding to the Brownian path on the top. }
\end{figure}

Let $(\Theta_t; t \geq 0)$ be the $\mathcal{C}[0,\infty)$-valued space-time shift of Brownian motion $B$, defined by
$$\Theta_t: =( B_{t+u} - B_t; u \geq 0) \quad \mbox{for all}~ t \geq 0.$$
In the path space setting, 
\begin{equation}
\label{funcm}
\alpha_t = \alpha_0 \circ \Theta_t \quad \mbox{for all}~t \geq 0.
\end{equation}
By the stationarity of $(\Theta_t; t \geq 0)$, for any measurable function $g: \mathcal{C}[0,\infty) \rightarrow \mathbb{R}$, the process $(g \circ \Theta_t; t \geq 0)$ is stationary. It is a well-known result of L\'{e}vy \cite{Levybook} that the time at which the minimum of Brownian motion on $[0,1]$ is achieved follows the arcsine distribution. As a result, we have the following proposition.
\begin{proposition}
\label{stationary}
The argmin process $(\alpha_t; t \geq 0)$ is stationary. The invariant measure is the arcsine distribution with density 
\begin{equation}
\label{arcsinesta}
f(x)=\frac{1}{\pi \sqrt{x(1-x)}} \quad \mbox{for}~0 < x < 1.
\end{equation}
\end{proposition}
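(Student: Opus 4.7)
The plan is to exploit the functional representation $\alpha_t = \alpha_0 \circ \Theta_t$ given in \eqref{funcm}, which reduces the stationarity of $\alpha$ to stationarity of the shift $(\Theta_t; t \geq 0)$. First I would verify that the map
\[
\alpha_0: \mathcal{C}[0,\infty) \to [0,1], \quad \omega \mapsto \sup\left\{s \in [0,1]: \omega(s) = \inf_{u \in [0,1]} \omega(u)\right\},
\]
is a Borel-measurable functional of the path; this is standard, since the infimum of a continuous function on a compact interval and the level set at that infimum are measurable in the path. Because $B$ has stationary independent increments, the $\mathcal{C}[0,\infty)$-valued process $(\Theta_t; t \geq 0)$ is stationary: for each fixed $s \geq 0$, $(\Theta_{t+s})_{t \geq 0}$ has the same law as $(\Theta_t)_{t \geq 0}$. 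Composing with the measurable functional $\alpha_0$ transfers stationarity from $\Theta$ to $\alpha$.

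For the second step I would identify the invariant measure. Stationarity implies $\alpha_t \stackrel{d}{=} \alpha_0$ for every $t \geq 0$, so it suffices to compute the law of $\alpha_0$. Since Brownian motion almost surely attains its minimum on $[0,1]$ at a unique point (it has no intervals of constancy), the sup in the definition of $\alpha_0$ is a.s.\ redundant and $\alpha_0$ coincides with $\argmin_{u \in [0,1]} B_u$. L\'evy's classical arcsine law \cite{Levybook} then yields that $\alpha_0$ has the density \eqref{arcsinesta}, and this arcsine law is therefore the invariant measure of $\alpha$.

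The argument rests on three standard inputs: measurability of $\alpha_0$ as a functional on path space, stationarity of the shift $\Theta$ from the stationary-increments property of $B$, and L\'evy's arcsine law for the location of the Brownian minimum. I do not foresee any genuine obstacle; the only point meriting care is the almost-sure uniqueness of the minimizer of $B$ on $[0,1]$, which is needed to match $\alpha_0$ with the object to which L\'evy's theorem applies.
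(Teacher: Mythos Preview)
Your proposal is correct and follows exactly the argument the paper gives in the paragraph preceding the proposition: stationarity of $\alpha$ comes from $\alpha_t = \alpha_0 \circ \Theta_t$ together with the stationarity of the shift process $\Theta$, and the invariant law is identified via L\'evy's arcsine theorem for the argmin of $B$ on $[0,1]$. Your additional remarks on measurability of $\alpha_0$ and uniqueness of the minimizer only make explicit what the paper leaves implicit.
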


It is natural to ask whether $(\alpha_t; t \geq 0)$ is a Markov process. Sufficient conditions for a function of a Markov process to be Markov are given by Dynkin \cite{Dynkin}, and Rogers and Pitman \cite{RogersPitman}. But these criteria do not apply to the argmin process, see Section \ref{s32}. Nevertheless, we prove the following theorem.
\begin{theorem}
\label{Ftrans}
The argmin process $(\alpha_t; t \geq 0)$ is a Markov process with Feller transition semigroup $Q_t(x, \cdot)$, $t >0$ and $x \in [0,1]$ where
\begin{equation}
\label{Qtrans}
Q_t(x,dy) =  \left\{ \begin{array}{ccl}        
 \displaystyle \frac{1_{\{0<y<1\}}}{ \pi \sqrt{y(1-y)}}dy  & \mbox{for}~0 \leq x \leq 1 <t, \\
 \displaystyle \sqrt{\frac{1-x}{1-x+t}} \delta_{x-t}(dy) + \frac{\sqrt{(y+t-1)^{+}}}{\pi (y+t-x) \sqrt{1-y}} dy & \mbox{for}~0< t \leq x \leq 1\\ 
 \displaystyle \frac{\sqrt{(1-x)(t-x)} + \sqrt{y(y+t-1)^{+}}}{\pi (y+t-x) \sqrt{y(1-y)}}dy  & \mbox{for}~0 \leq x<t \leq 1.
\end{array}\right.
\end{equation}
\end{theorem}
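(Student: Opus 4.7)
The plan is to derive the one-step transition kernel $Q_t(x,\cdot)$ by explicit computation via Denisov's path decomposition, bootstrap to the Markov property using a structural lemma on the post-minimum piece of the Brownian path, and read off the Feller property from the explicit formula.

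Condition on $\alpha_0=x\in(0,1)$. By Denisov's decomposition of Brownian motion at its minimum on $[0,1]$, the path $(B_{x+u}-B_x;\,u\in[0,1-x])$ is a Brownian meander of length $1-x$, independent of the pre-minimum part; in particular $\Delta_0:=B_1-B_x$ is Rayleigh with density $\delta(1-x)^{-1}e^{-\delta^2/(2(1-x))}\,d\delta$. The forward motion $W_u:=B_{1+u}-B_1$ is a standard Brownian motion independent of everything before time $1$. For $t>1$ the intervals $[0,1]$ and $[t,t+1]$ are disjoint, so $\alpha_t\perp\alpha_0$ and Proposition \ref{stationary} recovers the first line of (\ref{Qtrans}). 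For $0<t\leq x$, $\alpha_t=x-t$ if $\inf_{[0,t]}W\geq-\Delta_0$ and $\alpha_t=1+\argmin_{[0,t]}W-t$ otherwise; integrating the reflection-principle probability $\mathrm{erf}(\delta/\sqrt{2t})$ against the Rayleigh density of $\Delta_0$ produces the atom $\sqrt{(1-x)/(1-x+t)}\,\delta_{x-t}$, while integrating the classical joint density $|m|/(\pi s\sqrt{s(t-s)})\,e^{-m^2/(2s)}$ of $(\argmin,\min)$ of $W$ on $[0,t]$ against the Rayleigh density, after the substitution $y=1+s-t$, yields the absolutely continuous piece. The case $0\leq x<t\leq 1$ is analogous, with the old argmin replaced by $\argmin_{[t,1]}B$, identified by Denisov as a subsegment of the post-meander of $[0,1]$; this contributes the $\sqrt{(1-x)(t-x)}$ term in the third line of (\ref{Qtrans}).

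For the Markov property, observe that $(\alpha_{s+r})_{r\geq0}$ is a deterministic functional of $\alpha_s$, the post-minimum meander $(B_{s+\alpha_s+u}-B_{s+\alpha_s};\,u\in[0,1-\alpha_s])$, and the forward Brownian motion $(B_{s+1+r}-B_{s+1};\,r\geq0)$, where the last is independent of $\mathcal{F}_{s+1}^B\supset\mathcal{F}_s^\alpha$. The key lemma is that, conditional on $\mathcal{F}_s^\alpha$, the post-minimum meander is still a Brownian meander of length $1-\alpha_s$ whose law depends on $\mathcal{F}_s^\alpha$ only through $\alpha_s$. To verify this, consider any past event $\{\alpha_u=y_u\}$ with $u<s$: if the earlier argmin is inherited and located at $s+\alpha_s$, applying Denisov to the longer interval $[u,s+1]$ gives precisely the same post-meander of length $1-\alpha_s$; if the earlier argmin sits strictly before $s+\alpha_s$, then the constraint $B>B_{u+\alpha_u}$ on $[u,u+1]$ is automatically satisfied on $[s+\alpha_s,s+1]\cap[u,u+1]$ since $B>B_{s+\alpha_s}>B_{u+\alpha_u}$ there. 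In either case the past imposes no constraint on the post-meander beyond $\alpha_s$, and the lemma follows. The Markov identity $\mathbb{P}(\alpha_{s+r}\in A\mid\mathcal{F}_s^\alpha)=Q_r(\alpha_s,A)$ then follows by the tower property through $\mathcal{F}_{s+1}^B$.

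The Feller property is obtained by direct inspection of (\ref{Qtrans}): $x\mapsto Q_tf(x)$ is continuous for $f\in C[0,1]$ (the atom weight is continuous, the densities are jointly continuous in $(x,y)$ on each piece, and the two pieces match at the case boundary $x=t$), and $Q_tf\to f$ uniformly as $t\downarrow0$ follows from the bounded state space and the right-continuity of $\alpha$. The main obstacle is the structural lemma in the Markov step: the natural Markovian enrichment of $\alpha$ is the pair $(\alpha_s,\Delta_s)$, and one must verify that conditioning on the full past of $\alpha$ beyond $\alpha_s$ does not refine the law of the post-minimum meander; this conditional sufficiency is precisely the content that rules out the standard intertwining criteria mentioned in the excerpt.
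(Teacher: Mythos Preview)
Your approach is on the right track but diverges from the paper's in two places, and in each the paper's route closes a gap that yours leaves open.

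For the Markov property, you try to show directly that conditioning on $\mathcal{F}_s^\alpha$ does not refine the law of the post-minimum meander beyond its length $1-\alpha_s$. Your case analysis (inherited argmin vs.\ earlier argmin) is correct for a \emph{single} past constraint $\{\alpha_u=y_u\}$, but the passage from ``each such constraint is redundant on $[s+\alpha_s,s+1]$'' to ``the full $\sigma$-field $\mathcal{F}_s^\alpha$ is conditionally irrelevant'' is precisely the step you leave unformalized---as you yourself concede in the last paragraph. The paper avoids this by working in the enlarged filtration $\mathcal{G}_t:=\sigma(B_r;\,r\le t+\alpha_t)$: since $t\mapsto t+\alpha_t$ is increasing, $(\mathcal{G}_t)$ is a filtration, and because $u+\alpha_u\le s+\alpha_s$ forces $\argmin_{[u,u+1]}B=\argmin_{[u,\,s+\alpha_s]}B$, one has $\mathcal{F}_s^\alpha\subset\mathcal{G}_s$. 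Denisov then gives that, conditionally on $\alpha_t=x$, the backward path $(B_{t+x-r}-B_{t+x};\,r\le t+x)$ and the forward path $(B_{t+x+r}-B_{t+x};\,r\ge 0)$ are independent; the past of $\alpha$ is a functional of the former and the future of $\alpha$ a functional of the latter. This yields the Markov property relative to $\mathcal{G}_t$ in one stroke, with no need to analyze the conditional law of the meander given $\mathcal{F}_s^\alpha$.

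For the transition kernel, your direct computation via the joint density of $(\argmin,\min)$ of $W$ on $[0,t]$ is fine for $0<t\le x$ and essentially matches the paper's derivation of the atom mass $s(x,x-t)=\sqrt{(1-x)/(1-x+t)}$. But for $0\le x<t\le 1$ your phrase ``$\argmin_{[t,1]}B$, identified by Denisov as a subsegment of the post-meander'' conceals a real difficulty: you would need the joint law of the location and value of the minimum of a Brownian meander of length $1-x$ restricted to the sub-interval $[t-x,1-x]$, together with its endpoint, and Denisov does not hand you this. The paper sidesteps the problem entirely. It first computes $Q_t(1,dy)$ as a rescaled arcsine law on $[1-t,1]$, then writes, by conditioning on the first jump time $\tau_1^x$ to level $1$,
\[
Q_t(x,dy)=s(x,0)\,Q_{t-x}(0,dy)+\int_0^x \mu^{\uparrow 1}(z)\,s(x,z)\,Q_{t+z-x}(1,dy)\,dz
\]
for $x<t\le 1$. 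The unknown $Q_u(0,dy)$ is then extracted by specializing to $t=1$, where the left side is the arcsine law for every $x$. Everything reduces to explicit one-variable integrals, with no meander sub-segment computation required.
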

See Kallenberg \cite[Chapter 19]{Kallenberg} for background on Feller semigroups of continuous-time Markov processes. The proof of Theorem \ref{Ftrans} is given in Sections \ref{s33} and \ref{s35}. 

Our approach relies on Denisov's decomposition and excursion theory, see Section \ref{s2}. 
We also investigate the law of jumps of $(\alpha_t; t \geq 0)$. In particular, we prove that the argmin process $\alpha$ has local times at levels $0$ and $1$, and provide a L\'evy system of $(\alpha_t; t \geq 0)$. 
These results imply that the argmin process $\alpha$ is a time-homogeneous Markov process with an explicit description in the framework of {\em jumping Markov processes} by Jacod and Skorokhod \cite{JacodS}, following the study of {\em piecewise deterministic Markov processes} by Davis \cite{Davis}.

%\quad It will be seen that the argmin process of Brownian motion has quite an intricate structure, which we prefer to analyze in detail
%before discussing various possible generalizations. In a forthcoming article, we will 
%\begin{itemize}
%\item
%use Millar's decomposition \cite{Millar78} to show that the argmin process of a L\'evy process is Markov;
%\item
%calculate explicitly the semigroup of the argmin process of a stable process;
%\item
%study a discrete analog of the argmin process: the argmin chain of a random walk. 
%\end{itemize}

The motivation for considering the argmin process comes from the study of Brownian extrema with given length. In the second part of this paper we provide further insight into these extrema, following previous works of Neveu and Pitman \cite{NeveuPitman}, and Leuridan \cite{Leuridan}. For $a,b>0$, let
\begin{equation}
\label{abminima}
\mathcal{M}_{a,b}: = \left\{t \geq a: B_t = \inf_{s \in [t-a,t+b]} B_s\right\} = \{T_1^{a,b}, T^{a,b}_2, \cdots\},
\end{equation}
with $ a < T_1^{a,b}<T^{a,b}_2<\cdots$ be the {\em $(a,b)$-minima set} of Brownian motion $B$. 

%\begin{figure}[ht]
%\includegraphics[width=0.5\textwidth]{01.jpg}\\
%\caption{A Brownian path between two $(a,b)$-minima.}
%\end{figure}

The study of Brownian extrema dates back to L\'evy \cite{Levybook}. See \cite[Section 2.9]{KS} for development. Neveu and Pitman \cite{NeveuPitman} proved the renewal property of Brownian local extrema by looking at Brownian extrema of a given depth. They gave the following Palm description of Brownian local extrema. 
\begin{theorem} \cite{NeveuPitman}
\label{NP1}
Let $(\mathcal{C},\mathcal{B})$ be the space of continuous paths on $\mathbb{R}$, equipped with Wiener measure ${\bf W}$, and $(E,\mathcal{E})$ be the space of excursions with lifetime $\zeta$, equipped with It\^{o}'s law ${\bf n}$ (see Section \ref{s22} for discussion).
For $A \in \mathcal{B}$, define the $\sigma$-finite Palm measure of Brownian local minima by
\begin{equation}
\label{nuA}
\nu(A): = \mathbb{E} \#\{0 \leq t \leq 1: t ~\mbox{is a local minimum and}~\theta_t \in A\},
\end{equation}
where $\theta_t: = (b_{t+u} - b_t; t \in \mathbb{R})$ is the space-time shift of a two-sided Brownian motion $b$.
Then for $A \in \mathcal{B}$, 
\begin{equation}
\label{NP2}
\nu(A) = \frac{1}{2} ({\bf n} \otimes {\bf n} \otimes {\bf W})(f^{-1}(A)),
\end{equation}
where $f: E \times E \times \mathcal{C} \ni (e,e',w) \rightarrow  \widetilde{w} \in \mathcal{C}$ is a mapping given by 
\begin{equation}
\label{inversehh}
\widetilde{w}_t =     \left\{ \begin{array}{ccl}
         w_{t+\zeta(e')} & \mbox{if}
         & t \leq -\zeta(e'), \\ e'_{-t}  & \mbox{if} & -\zeta(e') \leq t \leq 0, \\
         e_t & \mbox{if} & 0 \leq t \leq \zeta(e), \\ w_{t-\zeta(e)} & \mbox{if} & t \geq \zeta(e).
                \end{array}\right.
                \end{equation}
\end{theorem}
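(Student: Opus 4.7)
The plan is to realize $\nu$ as the intensity of a marked point process on the set $\mathcal{L}$ of local minima of $b$ in $[0,1]$, to exhibit a pathwise decomposition at each $t \in \mathcal{L}$ that inverts the gluing map $f$, and then to identify the resulting intensity by an $\epsilon$-approximation combined with It\^o's excursion theory and Brownian time reversal.

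For $t \in \mathcal{L}$ set $\zeta^+(t) := \inf\{u > 0 : b_{t+u} = b_t\}$ and $\zeta^-(t) := \inf\{u > 0 : b_{t-u} = b_t\}$, both finite a.s.\ by recurrence. Define the right excursion $e^{(t)}(u) := b_{t+u} - b_t$ for $u \in [0, \zeta^+(t)]$, the time-reversed left excursion $e'^{(t)}(u) := b_{t-u} - b_t$ for $u \in [0, \zeta^-(t)]$, both elements of $E$, and the residual two-sided path $w^{(t)} \in \mathcal{C}$ by $w^{(t)}(v) := b_{t+\zeta^+(t)+v} - b_t$ for $v \ge 0$ and $w^{(t)}(v) := b_{t-\zeta^-(t)+v} - b_t$ for $v \le 0$. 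Comparing with \eqref{inversehh} gives $\theta_t b = f(e^{(t)}, e'^{(t)}, w^{(t)})$, so the theorem reduces to showing that the random measure
\[
M := \sum_{t \in \mathcal{L} \cap [0,1]} \delta_{(e^{(t)}, e'^{(t)}, w^{(t)})}
\]
on $E \times E \times \mathcal{C}$ has intensity $\tfrac12\,{\bf n} \otimes {\bf n} \otimes {\bf W}$.

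Since $\mathcal{L}$ is a.s.\ dense, I would next pass to the a.s.\ locally finite set $\mathcal{L}_\epsilon$ of local minima whose right and left excursions both have height at least $\epsilon$, and enumerate $\mathcal{L}_\epsilon \cap [0,1]$ by a renewal construction based on successive $\epsilon$-oscillations of $b$. The strong Markov property applied forward at the stopping time $t + \zeta^+(t)$ provides the forward Brownian continuation $w^{(t)}|_{[0,\infty)}$, and, together with It\^o's description of the excursions of reflected Brownian motion as a Poisson point process of intensity $\bf n$, identifies the conditional law of $e^{(t)}$ as ${\bf n}$ restricted to $\{\mathrm{height} \ge \epsilon\}$. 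Applying the same argument to the time-reversed Brownian motion $(b_{-s})_{s \ge 0}$ and invoking the independence of the two tails in the two-sided construction yields the analogous statement for $e'^{(t)}$ and $w^{(t)}|_{(-\infty,0]}$. Monotone convergence as $\epsilon \downarrow 0$ then produces the claimed intensity, the factor $\tfrac12$ arising from the fact that $\bf n$ is symmetric in up- versus down-excursions while at a local minimum only the up-orientation occurs on each side.

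The main obstacle is the computation of the intensity at the level of $\mathcal{L}_\epsilon$, in particular the joint independence of the triple $(e^{(t)}, e'^{(t)}, w^{(t)})$ and the correct value of the constant $\tfrac12$. The independence of the forward pair $(e^{(t)}, w^{(t)}|_{[0,\infty)})$ from the past follows directly from the strong Markov property at $t + \zeta^+(t)$, but the corresponding backward statement requires Brownian time reversal and must be fitted together with the Palm computation for $\mathcal{L}_\epsilon$. A convenient rigorization is to apply a Campbell-type identity after enlarging the filtration symmetrically on both sides of $t$, reducing the global problem to the standard Palm formula for the excursions of reflected Brownian motion, from which $\tfrac12\,{\bf n}$ naturally emerges as the two-sided normalization of the one-sided It\^o measure.
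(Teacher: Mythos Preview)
The paper does not prove Theorem~\ref{NP1}; it is quoted from Neveu--Pitman \cite{NeveuPitman} as background. The only argument the paper offers toward it is the remark following Theorem~\ref{minimabetlaw}: once the Palm measure $\nu^a$ of $(a,a)$-minima is identified as $\tfrac12\,1_{\{\zeta'>a,\,\zeta>a\}}\,({\bf n}\otimes{\bf n}\otimes{\bf W})\circ f^{-1}$, letting $a\downarrow 0$ recovers \eqref{NP2} by monotone convergence. So the paper's route is \emph{indirect}: first build the renewal structure of $(a,a)$-minima (Theorems~\ref{Leust} and~\ref{minimabetlaw}), read off $\nu^a$, then pass to the limit.

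Your approach is a direct attack on $\nu$ itself via an $\epsilon$-thinning of local minima, which is closer to Neveu--Pitman's original argument (they thin by excursion \emph{depth} rather than height, which is what makes the renewal structure clean). This is a legitimate alternative, and the pathwise identification $\theta_t b = f(e^{(t)},e'^{(t)},w^{(t)})$ is correct. However, two points need tightening. First, your explanation of the factor $\tfrac12$ is wrong under the paper's convention: here ${\bf n}$ is the It\^o measure of \emph{positive} excursions of $B-\underline{B}$ (see Section~\ref{s22}; it equals $2{\bf n}^+$ in Revuz--Yor), so there is no up/down symmetry to invoke. The $\tfrac12$ is a local-time normalization: with ${\bf n}(\zeta>a)=\sqrt{2/(\pi a)}$, the mean number of $(a,a)$-minima per unit time is $\tfrac12\,{\bf n}(\zeta>a)^2 = 1/(\pi a)$, matching \eqref{mean}. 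Second, the joint independence of $(e^{(t)},e'^{(t)},w^{(t)})$ under the Palm measure is the real content of the theorem, and ``enlarging the filtration symmetrically'' plus a Campbell identity is not yet a proof. The clean way to get it is to work with $\epsilon$-minima defined by depth (so that successive $\epsilon$-minima are separated by genuine stopping times on both time axes), obtain the renewal structure, and then compute the Palm measure of that stationary point process; the paper's $(a,a)$-minima argument is precisely an instance of this.
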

%\begin{figure}[ht]
%\includegraphics[width=0.6\textwidth]{7.png}
%\caption{Structure of a Brownian local minimum.}
%\end{figure}
The quantity $\nu(A)$ is interpreted as the mean number of Brownian local minima of type $A$ per unit time.
See Kallenberg \cite[Chapter 11]{Kallenberg} for background on Palm measures. 

Neveu-Pitman's results were generalized to Brownian motion with drift by Faggionato \cite{Faggionato}. Inspired from \cite{NeveuPitman}, Leuridan \cite{Leuridan} considered Brownian extrema with given length.  
In different directions, Groeneboom \cite{Groeneboom} considered the global extremum of Brownian motion with a parabolic drift, where he gave a density formula in terms of {\em Airy functions}. Tsirelson \cite{Tsirelson} provided an i.i.d. uniform sampling construction of Brownian local extrema under external randomization. Abramson and Evans \cite{AbE} considered Lipschitz minorants of Brownian motion, which is a variant of Brownian extrema.

Leuridan studied the $(a,b)$-minima set $\mathcal{M}'_{a,b}: = \{t \in \mathbb{R}: b_t = \inf_{s \in [t-a, t+b]} b_s\}$ of a two-sided Brownian motion $(b_t; t \in \mathbb{R})$.  He proved that times of the set $\mathcal{M}'_{a,b}$ form a renewal process, and provided the density of the inter-arrival times.  
Observe that
\begin{equation}
\label{stadelay}
\mathcal{M}_{a,b} - a \stackrel{(d)}{=} \mathcal{M}'_{a,b} \cap [0,\infty).
\end{equation}
That is, $\mathcal{M}_{a,b} - a : = \{T_i^{a,b}-a; i \geq 1\}$ is a renewal process with stationary delay. We adapt Leuridan's result to one-sided Brownian motion as follows.

\begin{theorem} \cite{Leuridan}
\label{Leust}
Let $a, b >0$. The times of $(a,b)$-minima of Brownian motion $(B_t; t \geq 0)$ form a delayed renewal process, denoted by $(T_i^{a,b}; i \geq 1)$ so that $a < T_1^{a,b} < T_2^{a,b}< \cdots$. Let 
\begin{equation}
\label{hab}
h_{a,b}(t): =     \left\{ \begin{array}{ccl}
         \displaystyle \frac{1}{\pi t} \left( \sqrt{\frac{(t - b)^{+}}{b}} + \sqrt{\frac{(t - a)^{+}}{a}}\right) & \mbox{for} & 0 < t < a+ b \\ 
         \displaystyle \frac{1}{\pi \sqrt{ab}} & \mbox{for} & t \geq a+b.
                \end{array}\right.
\end{equation}
Then $(\Delta_i^{a,b}: = T_{i+1}^{a,b} - T_i^{a,b}; i \geq 1)$ are independent, with density
\begin{equation}
\label{Gaplaw}
g_{a,b}(t) : = \sum_{n=1}^{\infty} (-1)^{n-1} h_{a,b}^{*n}(t),
\end{equation}
where $h_{a,b}^{*n}$ is the $n^{th}$ convolution of $h_{a,b}$. In addition, $T_1^{a,b}$ is independent of $(\Delta_i^{a,b}; i \geq 1)$, and has density
\begin{equation}
\label{T1law}
f_{a,b}(t) : = \frac{1_{\{t > a\}}}{\pi \sqrt{ab}} \int_{t-a}^{\infty} g_{a,b}(s) ds.
\end{equation}
\end{theorem}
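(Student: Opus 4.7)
The plan is to deduce Theorem \ref{Leust} from Leuridan's two-sided version together with the identity \eqref{stadelay} and the standard fact that a stationary renewal process on $\mathbb{R}$ restricted to $[0,\infty)$ is a delayed renewal process. Let $(b_t;\, t \in \mathbb{R})$ be a two-sided standard Brownian motion. By Leuridan's theorem, $\mathcal{M}'_{a,b}$ is a stationary renewal point process on $\mathbb{R}$ with inter-arrival density $g_{a,b}$ as in \eqref{Gaplaw} and intensity $\lambda := 1/(\pi\sqrt{ab})$, which one reads off from the constant value $h_{a,b}(t) = \lambda$ for $t \geq a+b$.

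Writing the points of $\mathcal{M}'_{a,b} \cap [0,\infty)$ in increasing order as $0 < T'_1 < T'_2 < \cdots$, the renewal property of the full point process on $\mathbb{R}$ immediately yields that the gaps $\Delta'_i := T'_{i+1} - T'_i$ for $i \geq 1$ are i.i.d.\ with density $g_{a,b}$, independent of $T'_1$. Stationarity of $\mathcal{M}'_{a,b}$ identifies the distribution of $T'_1$ as the forward recurrence time of the stationary renewal process: $\mathbb{P}(T'_1 > s) = \lambda \int_s^\infty \overline{G}_{a,b}(u)\,du$ where $\overline{G}_{a,b}(u) := \int_u^\infty g_{a,b}(v)\,dv$, so $T'_1$ has density $\lambda\,\overline{G}_{a,b}$ on $(0,\infty)$. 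Combining with \eqref{stadelay}, namely $(T^{a,b}_i - a)_{i \geq 1} \stackrel{(d)}{=} (T'_i)_{i \geq 1}$, the gaps $\Delta^{a,b}_i$ are i.i.d.\ with density $g_{a,b}$ and independent of $T^{a,b}_1$, while $T^{a,b}_1 \stackrel{(d)}{=} T'_1 + a$ has density $f_{a,b}$ of \eqref{T1law}. The indicator $\mathbf{1}_{\{t > a\}}$ reflects the fact that in the one-sided setting no $(a,b)$-minimum can occur before time $a$.

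The genuine difficulty lies not in the above reduction but in Leuridan's two-sided theorem itself, which is quoted. Its proof rests on three ingredients, all carried out in \cite{Leuridan}: (i) the renewal property of $\mathcal{M}'_{a,b}$, obtained by combining excursion theory with the Markov property of Brownian motion to show that the post-minimum trajectory (shifted so that $b$ vanishes at the minimum) is independent of the past; (ii) an explicit formula for the two-point Palm intensity $h_{a,b}$, where for $t \geq a+b$ the non-overlapping windows $[-a,b]$ and $[t-a,t+b]$ decouple and produce $h_{a,b}(t) = \lambda$, while for $0 < t < a+b$ one performs an arcsine-type calculation on the overlapping windows using the joint law of the infima of Brownian motion on $[t-a,t]$ and $[0,b]$; and (iii) the inversion of the renewal equation $\widehat{h}_{a,b} = \widehat{g}_{a,b}/(1 - \widehat{g}_{a,b})$ in Laplace transforms as a geometric series, giving $\widehat{g}_{a,b} = \widehat{h}_{a,b}/(1 + \widehat{h}_{a,b})$ and hence $g_{a,b} = \sum_{n \geq 1} (-1)^{n-1} h_{a,b}^{*n}$ in the time domain.
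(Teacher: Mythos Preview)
Your reduction is correct and shares its overall skeleton with the paper's argument: both use \eqref{stadelay} to identify $\mathcal{M}_{a,b}-a$ as a stationary delayed renewal process, then invert the renewal equation to pass from the two-point intensity $h_{a,b}$ to the inter-arrival density $g_{a,b}$, and finally read off the law of $T_1^{a,b}$ from the forward recurrence time formula.

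Where you diverge from the paper is in the derivation of $h_{a,b}$ itself. You simply quote Leuridan's two-sided theorem and sketch his direct Palm computation on overlapping windows. The paper's point in Section~\ref{s42}, by contrast, is to \emph{recover} $h_{a,b}$ from the argmin process machinery built in Section~\ref{s3}: Lemma~\ref{Gapmeanlem} identifies $\mathbb{E}[N_{a,b}(ds)N_{a,b}(dt)]$ with $\tfrac{1}{a+b}\,q_{(t-s)/(a+b)}\!\left(\tfrac{a}{a+b},\tfrac{a}{a+b}\right)$ via Brownian scaling and the relation \eqref{Malpha}, and then reads off $h_{a,b}$ directly from the explicit transition density $q_t(x,y)$ of Theorem~\ref{Ftrans}. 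This is the genuine novelty the paper is advertising---that the two-point intensity, and hence Leuridan's whole formula, drops out of the Feller semigroup of $\alpha$---and your proposal bypasses it entirely. Your route is logically sound and shorter if one is willing to black-box Leuridan, but it does not exhibit the connection to the argmin process that the paper is built around.
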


Given a measurable set $A \subset \mathbb{R}^{+}$, let $N_{a,b}(A): = \#(\mathcal{M}_{a,b} \cap A)$ be the counting measure of $(a,b)$-minima in Brownian motion $B$. Leuridan's proof of Theorem \ref{Leust} is based on the formula, for $n \geq 1$ and $0<t_1<\cdots < t_n$,
\begin{equation}
\label{keyleu}
\mathbb{E}(N_{a,b}(dt_1) \cdots N_{a,b}(dt_n)) = \frac{1}{\pi \sqrt{ab}} \prod_{k=1}^{n-1} h_{a,b}(t_k - t_{k-1}) dt_1 \cdots dt_n,
\end{equation}
with convention that $\prod_{\emptyset} := 1$. The case $n=1$ of \eqref{keyleu} follows readily from Theorem \ref{NP1}, since for a generic $(a,b)$-minimum, the left excursion has length larger than $a$ and the right excursion has length larger than $b$. This implies that the mean number of $(a,b)$-minima per unit time is given by
\begin{equation*}
\frac{1}{2} {\bf n}(\zeta(e')>a) {\bf n}(\zeta(e) > b) = \frac{1}{\pi \sqrt{ab}}.
\end{equation*}
In particular,
\begin{equation}
\label{Gapmean}
\mathbb{E}(\Delta_{i}^{a,b} ) = \pi \sqrt{ab} \quad \mbox{for all}~i \geq 1.
\end{equation}
However, to obtain \eqref{keyleu} for $n \geq 2$ requires extra work. 
Observe that for $a+b =1$, the set $\mathcal{M}_{a,b}$ can be viewed as the $a$-level set of the argmin process $\alpha$. So by Brownian scaling,
\begin{align}
\label{Malpha}
\mathcal{M}_{a,b} &\stackrel{(d)}{=} (a+b) \mathcal{M}_{\frac{a}{a+b}, \frac{b}{a+b}} \notag\\
                              & \stackrel{(d)}{=} (a+b) \alpha^{-1}\left( \left\{\frac{a}{a+b}\right\}\right) \quad \mbox{for}~a,b >0.
\end{align}

According to Hoffmann-J{\o}rgensen \cite{HJ}, and Krylov and Ju{\v{s}}kevi{\v{c}} \cite{KJ}, the set $\mathcal{M}_{a,b}$ enjoys regenerative property, and is called a {\em strong Markov set}. See also Kingman \cite{Kingman} for a survey on regenerative phenomena of level sets of Markov processes.
In Section \ref{s42}, we recover Theorem \ref{Leust}, in particular \eqref{keyleu}, by using the properties of the argmin process $\alpha$.

Note that the density $h_{a,b}$ defined by \eqref{hab} is induced by a $\sigma$-finite measure. By Leuridan's formula \eqref{Gaplaw}, the Laplace transform of $\Delta^{a,b}_i$ is given by
\begin{equation}
\label{PbPb}
\Phi_{\Delta^{a,b}}(\lambda) = \frac{\Psi(\lambda)}{1 + \Psi(\lambda)} \quad \mbox{with}~\Psi(\lambda) := \int_{0}^{\infty} e^{-\lambda t} h_{a,b}(t) dt,
\end{equation}
provided that $\Psi(\lambda) < 1$. By analytic continuation, we extend \eqref{PbPb} to all $\lambda>0$.
 But it does not seem obvious how to simplify \eqref{PbPb} analytically. 

While the description of $\mathcal{M}_{a,b}$ is complicated for general $a,b > 0$, the case $a=b$ is simplified. 
For simplicity, we consider $a = b =1$. 
We shall give a construction of the $(1,1)$-minima set
$$\mathcal{M}_{1,1}:= \{T_1, T_2, \cdots\} \quad \mbox{with}~1<T_1<T_2< \cdots,$$
from which we derive simple formulas for the Laplace transforms of $\Delta_i: = T_{i+1} - T_i$ and $T_1$.
 
Let $J$ be the first descending ladder time of Brownian motion, from which starts an excursion above the minimum of length larger than $1$. It is known that the Laplace transform of $J$ is given by 
\begin{equation}
\label{Laplace}
\Phi_J(\lambda) = \frac{1}{\sqrt{\pi \lambda} \erf(\sqrt{\lambda}) + e^{-\lambda}},
\end{equation}
where $\erf(x): = \frac{2}{\sqrt{\pi}} \int_{0}^x e^{-t^2} dt$ is the error function. See Proposition \ref{PropJ} for a derivation of \eqref{Laplace}.

The random variable $J$ plays an important role in our construction of the $(1,1)$-minima set. 
Let $\Delta$ be distributed as the law of the inter-arrival times $\Delta_i$, independent of $J$.
It is a simple consequence of the construction in Section \ref{s43} of the Brownian path over $[0, T_1]$ and over $[T_i,T_{i+1}]$ that
\begin{equation}
\label{001}
T_1 \stackrel{(d)}{=} J + 1_{\{J<1\}} \Delta, \quad \mbox{with}~J~\mbox{independent of}~\Delta.
\end{equation}
Combined with the fact that $T_1 - 1$ is the stationary delay for a renewal process with inter-arrival time distributed according to $\Delta$, this leads to the following result:
\begin{theorem}
\label{minimabetlaw}
Let $(T_i; i \geq 1)$ with $1 < T_1<T_2<\cdots$ be times of the $(1,1)$-minima set $\mathcal{M}_{1,1}$ of Brownian motion $B$. 
\begin{enumerate}
\item
Let $J'$ be an independent copy of $J$, whose Laplace transform is given by \eqref{Laplace}. Then there is the identity in law
\begin{equation}
\label{002}
T_1 - 1 \stackrel{(d)}{=} J+ J'.
\end{equation}
In particular, the Laplace transforms of $T_1$ and $\Delta$ are given by
\begin{equation}
\label{Laplace2}
\Phi_{T_1}(\lambda) =  \frac{e^{-\lambda}}{(\sqrt{\pi \lambda}\erf(\sqrt{\lambda}) + e^{-\lambda})^2},
\end{equation}
and
\begin{equation}
\label{Laplace3}
 \Phi_{\Delta}(\lambda) = 1 -\frac{\pi \lambda}{(\sqrt{\pi \lambda}\erf(\sqrt{\lambda}) + e^{-\lambda})^2}.
\end{equation}
Consequently,
\begin{equation}
\mathbb{E}T_1 = 3 \quad \mbox{and} \quad \mathbb{E} \Delta= \pi.
\end{equation}
\item
The fragments 
$(B_{T_i+t} - B_{T_i}; 0 \leq t \leq \Delta_i)_{i \geq 1}$,
are i.i.d., starting as Brownian meander of length $1$, and then running as Brownian motion until the next $(1,1)$-minima occurs. 

\begin{equation}
\label{mean1level}
\mathbb{E}B_{T_1} = -\sqrt{\frac{\pi}{2}} \quad \mbox{and} \quad \Var B_{T_1} = \frac{4+ \pi}{2},
\end{equation}
and
\begin{equation}
\label{mean2level}
\mathbb{E}(B_{T_{i+1}} - B_{T_i}) = 0 \quad \mbox{and} \quad \Var(B_{T_{i+1}} - B_{T_i}) = \pi~\mbox{for all}~i \geq 1.
\end{equation}
\end{enumerate}
\end{theorem}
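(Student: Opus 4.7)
The plan is to derive part (1) from the identity \eqref{001} (obtained from the path construction of Section 4.3) combined with the observation that $T_1 - 1$ is the stationary delay of the renewal process with inter-arrival law $\Delta$, and to derive part (2) from the Brownian meander structure near each $T_i$ together with optional stopping at the stopping times $T_i + 1$.

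For part (1), I would first note that formula \eqref{T1law} specialized to $a=b=1$ reads $f_{1,1}(t) = \pi^{-1}\mathbb{P}(\Delta>t-1)1_{\{t>1\}}$, which together with $\mathbb{E}\Delta = \pi$ from \eqref{Gapmean} identifies the law of $T_1 - 1$ with the stationary delay density $\mathbb{P}(\Delta > \cdot)/\mathbb{E}\Delta$, yielding the Laplace relation $\Phi_{T_1 - 1}(\lambda) = (1 - \Phi_\Delta(\lambda))/(\pi\lambda)$. Next, taking the Laplace transform of \eqref{001} and using $J \perp \Delta$ gives $\Phi_{T_1}(\lambda) = \mathbb{E}[e^{-\lambda J}1_{\{J \geq 1\}}] + \mathbb{E}[e^{-\lambda J}1_{\{J < 1\}}] \Phi_\Delta(\lambda)$. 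Eliminating $\Phi_\Delta$ between the two identities and substituting the explicit form \eqref{Laplace} of $\Phi_J$, the resulting expression simplifies to $\Phi_{T_1 - 1}(\lambda) = \Phi_J(\lambda)^2$, which is precisely \eqref{002}. Formulas \eqref{Laplace2} and \eqref{Laplace3} then follow by direct substitution, and a Taylor expansion of the denominator in \eqref{Laplace} near $\lambda = 0$ gives $\mathbb{E}J = 1$, whence $\mathbb{E}T_1 = 1 + 2\mathbb{E}J = 3$; the identity $\mathbb{E}\Delta = \pi$ is just \eqref{Gapmean}.

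For part (2), I would deduce the i.i.d.\ fragment structure from the regenerative property of $\mathcal{M}_{1,1}$ (a consequence of Theorem \ref{Leust}). The internal ``meander-then-Brownian-motion'' description follows from Denisov's decomposition applied to $B$ on $[T_i - 1, T_i + 1]$, whose minimum is attained at the midpoint $T_i$: Denisov identifies $(B_{T_i + t} - B_{T_i})_{0 \leq t \leq 1}$ as a standard Brownian meander of length $1$, independent of $(T_i, B_{T_i})$. The strong Markov property at the stopping time $T_i + 1$ then produces a fresh Brownian motion running until the next $(1,1)$-minimum $T_{i+1}$. To extract the moments, I would use Wald: since $T_i + 1$ is a stopping time with $\mathbb{E}(T_i + 1) = 4 + (i-1)\pi < \infty$, optional stopping gives $\mathbb{E}B_{T_i + 1} = 0$ and $\mathbb{E}B_{T_i + 1}^2 = \mathbb{E}(T_i + 1)$. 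Writing $B_{T_i + 1} = B_{T_i} + M_i$ with $M_i$ the Rayleigh-distributed meander value at time $1$ (mean $\sqrt{\pi/2}$, variance $2 - \pi/2$) and $M_i \perp B_{T_i}$ by Denisov, one reads off $\mathbb{E}B_{T_i} = -\sqrt{\pi/2}$ and $\Var B_{T_i} = \mathbb{E}(T_i+1) - (2 - \pi/2) = 2 + (i-1)\pi + \pi/2$; in particular $\Var B_{T_1} = (4+\pi)/2$. For $i \geq 1$, the regenerative independence $(B_{T_{i+1}} - B_{T_i}) \perp B_{T_i}$ gives $\Var(B_{T_{i+1}} - B_{T_i}) = \Var B_{T_{i+1}} - \Var B_{T_i} = \pi$, while $\mathbb{E}(B_{T_{i+1}} - B_{T_i}) = 0$ follows from the fact that $\mathbb{E}B_{T_i}$ is constant in $i$.

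The main obstacle is the algebraic step reducing the two Laplace-transform equations to $\Phi_{T_1 - 1}(\lambda) = \Phi_J(\lambda)^2$: the simplification hinges on the identity $\mathbb{E}[e^{-\lambda J}1_{\{J<1\}}] = \erf(\sqrt{\lambda})/\sqrt{\pi\lambda}$, equivalently that $J$ has density $1/(\pi\sqrt{t})$ on $(0, 1)$, which requires a separate calculation exploiting the descending ladder structure that underlies formula \eqref{Laplace}. A secondary delicate point is the justification of the independences $B_{T_i} \perp M_i$ and $(B_{T_{i+1}} - B_{T_i}) \perp B_{T_i}$ via Denisov-type and regenerative arguments, since $T_i$ is itself not a stopping time in the natural filtration and one must handle the conditioning on $(1,1)$-minima carefully.
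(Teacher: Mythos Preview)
Your approach to part (1) is essentially the paper's: both combine the Laplace transform of \eqref{001} with the stationary-delay relation $\Phi_{T_1-1}(\lambda)=(1-\Phi_\Delta(\lambda))/(\pi\lambda)$ and the identity $\mathbb{E}[e^{-\lambda J}1_{\{J<1\}}]=\erf(\sqrt\lambda)/\sqrt{\pi\lambda}$ (which the paper imports from Proposition~\ref{PropJ}, formula \eqref{firstlaw}) to arrive at $\Phi_{T_1-1}=\Phi_J^2$.

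For part (2), your moment computation is genuinely different from the paper's. The paper exploits the explicit path construction of Section~\ref{s43} to write $B_{T_{i+1}}-B_{T_i}\stackrel{(d)}{=}\sum_{k=1}^N(R_k-\xi_k)$ as a geometric random sum of independent Rayleigh-minus-exponential increments, then applies Wald's identities to this discrete sum; for $B_{T_1}$ it uses the analogous representation $B_{T_1}\stackrel{(d)}{=}-\xi+1_{\{J<1\}}H$. Your route via optional stopping at the Brownian stopping time $T_i+1$ (giving $\mathbb{E}B_{T_i+1}=0$, $\mathbb{E}B_{T_i+1}^2=\mathbb{E}(T_i+1)$) and then peeling off the independent meander value $M_i$ is more direct and avoids the geometric-sum bookkeeping; it also gives $\Var B_{T_i}$ for every $i$ in one stroke. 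The paper's approach, on the other hand, yields the full distributional identity \eqref{interpath}, not just moments.

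One caution: your appeal to ``Denisov's decomposition applied to $B$ on $[T_i-1,T_i+1]$'' is not quite right as stated, since Theorem~\ref{DenisovBM} concerns a \emph{fixed} interval with the argmin location as the conditioning variable, whereas here the interval is random and you are implicitly working under a Palm-type conditioning on $\{T_i\text{ is a }(1,1)\text{-minimum}\}$. The meander structure of $(B_{T_i+t}-B_{T_i};\,0\le t\le 1)$ and its independence from $(T_i,B_{T_i})$ are correct, but the clean justification is the path construction itself (which in turn rests on Theorem~\ref{exmeander}), exactly as the paper does; you should invoke that rather than Denisov. Once the meander independence is in hand, your Wald argument goes through.
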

In Section \ref{s43}, we prove the identity in law \eqref{002} by computing the Laplace transform \eqref{Laplace2} of $T_1$. This identity in law is surprising, and we do not have a simple explanation. Though we are able to compute the first two moments \eqref{mean1level}-\eqref{mean2level}, the laws of $B_{T_1}$, and $B_{T_{i+1}} - B_{T_i}$ seem to be difficult.
We leave these for further investigation.

%\quad Let $\mathcal{M}_{1,1}': = \{\cdots, T'_0, T'_1, \cdots\}$ with $\cdots < T'_0 < 0 \leq T'_1$ be the $(1,1)$-minima set of a two-sided Brownian motion $(b_t; t \in \mathbb{R})$. By \eqref{stadelay}, the identity \eqref{002} is equivalent to
%$$T'_{1} \stackrel{(d)}{=} J + J'.$$
%Note that
%$$B_{T_1} =  B_1 + (B_{T_1} - B_1) \quad \mbox{with}~B_{T_1} - B_1 \stackrel{(d)}{=} b_{T'_1},$$
%and
%$$B_{T_{i+1}} - B_{T_i} \stackrel{(d)}{=} b_{T'_{i+1}} - b_{T'_i}~\mbox{for all}~i \geq 1.$$
%But $B_1$ and $B_{T_1} - B_1$ are not independent. Then \eqref{mean1level} and \eqref{mean2level} imply that
%\begin{equation}
%\mathbb{E}b_{T'_1} = -\sqrt{\frac{\pi}{2}},
%\end{equation}
%and
%\begin{equation}
%\mathbb{E}(b_{T'_{i+1}} - b_{T'_i}) = 0 \quad \mbox{and} \quad \Var(b_{T_{i+1}} - b_{T_i}) = \pi~\mbox{for all}~i \geq 1.
%\end{equation}

Recall the notations in Theorem \ref{NP1}. For $a>0$, let
$$\nu^a(A): = \mathbb{E}\#\{0 \leq t \leq 1: t \in \mathcal{M}'_{a,a}~\mbox{and}~\theta_t \in A\} \quad \mbox{for}~A \in \mathcal{B},$$
be the Palm measure of $(a,a)$-minima of a two-sided Brownian motion. Theorem \ref{minimabetlaw} implies that $\nu^a$ has total mass $\frac{1}{\pi a}$, and
\begin{equation}
\nu^a(A) = \frac{1}{2} 1_{\{\zeta'>a, \zeta>a\}} ({\bf n \otimes n \otimes W})(f^{-1}(A)) \quad \mbox{for}~A \in \mathcal{B},
\end{equation}
where $f$ is the mapping defined by \eqref{inversehh}. By taking $a \downarrow 0$, the Palm measures $\nu^a$ increase to the limit $\nu$ defined by \eqref{NP2}. This recovers Theorem \ref{NP1}.

The set $\mathcal{M}_{1,1}$ is directly related to the argmin process $\alpha$ without scaling. In fact, $T_i$ is the $i^{th}$ time that the process $\alpha$ reaches $0$ by a continuous passage from $1$. So the law of Brownian fragments between $(1,1)$-minima can be derived from the study of $\alpha$. Let 
$$\mathcal{LE}: = \{t \geq 0: B_t < B_s,~ \mbox{for all}~s \in [t,t+1]\},$$
be the left ends of forward meanders of length $1$, and 
$$\mathcal{RE}: = \{t \geq 1: B_t < B_s,~\mbox{for all}~s \in [t-1,t]\},$$
be the right ends of backward meanders of length $1$.
In Lemma \ref{lbefr}, we show that left ends come before right ends between any two consecutive $(1,1)$-minima. So we define for each $i \geq 1$,
\begin{equation}
D_i: = \inf\{t>T_i: t \in \mathcal{RE}\} \quad \mbox{and} \quad G_i:=\sup\{t < D_i: t \in \mathcal{LE}\}.
\end{equation}
For each $i \geq 1$, the triple $(G_i - T_{i}, D_i - G_i, T_{i+1} - D_i)$ gives a decomposition of $\Delta_i$:
\begin{equation}
\label{DecompGDT}
\Delta_i = (G_i - T_{i}) + (D_i - G_i) + (T_{i+1} - D_i).
\end{equation}
By using the L\'evy system of the argmin process, we prove the following theorem which identifies the law of this triple.

\begin{theorem}
\label{ABC}
For each $i \geq 1$, $G_i - T_{i}$, $D_i - G_i$ and $T_{i+1} - D_i$ are mutually independent, with
\begin{itemize}
\item 
$G_i - T_{i} \stackrel{(d)}{=} T_{i+1} - D_i \stackrel{(d)}{=} J$, the Laplace transform of which is given by \eqref{Laplace};
\item
the density of $D_i -G_i$ is given by 
\begin{equation}
\label{lawDG}
\mathbb{P}(D_i -G_i \in dt) =  \frac{2-t}{t^2 \sqrt{t-1}} 1_{\{1<t<2\}}dt.
\end{equation}
Consequently, for each $i \geq 1$,
\begin{equation}
\mathbb{E}(G_i - T_i) = \mathbb{E}(T_{i+1} -D_i) = 1 \quad \mbox{and} \quad \mathbb{E}(D_i-G_i) = \pi -2.
\end{equation}
\end{itemize}
\end{theorem}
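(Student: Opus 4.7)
My plan exploits the i.i.d.\ fragment structure of Theorem \ref{minimabetlaw}(2), time reversal of the canonical fragment, the preceding Lemma \ref{lbefr} (interior left-ends precede interior right-ends between consecutive $(1,1)$-minima), and the L\'evy system of the argmin process $\alpha$.

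By Theorem \ref{minimabetlaw}(2), the fragments $(B_{T_i+s}-B_{T_i};\,0\le s\le\Delta_i)$ are i.i.d., so it suffices to analyze one canonical fragment of length $\Delta=T_{i+1}-T_i$. Lemma \ref{lbefr} partitions the interior $(T_i,T_{i+1})$ into three zones: $(T_i,G_i]$ containing all interior left-ends of forward meanders, a middle zone $(G_i,D_i)$ containing none, and $[D_i,T_{i+1})$ containing all interior right-ends of backward meanders. Writing $g:=G_i-T_i$ and $\delta:=D_i-T_i$, the time reversal $s\mapsto\Delta-s$ preserves the law of the fragment (both endpoints are $(1,1)$-minima, a time-symmetric condition) and interchanges left-ends with right-ends. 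Combined with the three-zone decomposition, the last interior left-end of the reversed fragment before its first interior right-end is $\Delta-\delta$, and that first interior right-end is $\Delta-g$, so distributional invariance gives $(g,\delta-g,\Delta-\delta)\stackrel{(d)}{=}(\Delta-\delta,\delta-g,g)$, and in particular $G_i-T_i\stackrel{(d)}{=}T_{i+1}-D_i$.

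To identify the common law with $J$, I apply the strong Markov property of $B$ at $D_i$: the increments $W_s:=B_{D_i+s}-B_{D_i}$ form a standard Brownian motion. Since $B_{D_i}=\inf_{u\in[D_i-1,D_i]}B_u$, a time $T>D_i$ is the first $(1,1)$-minimum of $B$ after $D_i$ if and only if $W$ has a first descending ladder epoch at $s=T-D_i$ whose subsequent excursion above the new infimum has length at least $1$: the backward condition $B_u\ge B_T$ on $[T-1,T]$ is automatic, either from the right-end property of $D_i$ together with $W_s\le 0$ (for $u<D_i$), or from $W$ not yet having reached a lower level (for $u\in[D_i,T]$). This matches the definition of $J$, giving $T_{i+1}-D_i\stackrel{(d)}{=}J$.

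For the density of $D_i-G_i$, I use the L\'evy system of $\alpha$. At $G_i$, $\alpha$ reaches $0$ and then jumps to a random state drawn from the L\'evy kernel at $0$; on the event of no return to $0$ in $(G_i,D_i)$, $\alpha$ evolves in $(0,1]$ via drift $-1$ and further upward jumps, until a jump lands it at $1$ at time $D_i-1$, after which it descends linearly on $(D_i-1,D_i)$. A direct calculation combining the L\'evy kernel, the transition density $Q_t$ from Theorem \ref{Ftrans}, and the no-return constraint yields \eqref{lawDG}. Joint independence of the three pieces then follows by applying the Markov structure of $\alpha$ around the jump times defining $G_i$ and $D_i-1$, together with the explicit factorization of the resulting joint Laplace transform. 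The main obstacle is the explicit L\'evy-system calculation for the density of $D_i-G_i$, which requires careful tracking of $\alpha$'s trajectory conditional on the no-return event in the middle zone.
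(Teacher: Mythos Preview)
Your approach is close to the paper's and the main ideas---time reversal to match $G_i-T_i$ with $T_{i+1}-D_i$, identification of the latter with $J$, and a L\'evy-system computation for the law of $D_i-G_i$---are the right ones. But two points need repair.

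\textbf{Independence.} Your claim that ``joint independence\dots follows by applying the Markov structure of $\alpha$ around the jump times defining $G_i$ and $D_i-1$'' does not work as stated: $G_i$ is a \emph{last-exit} time from $0$ for $\alpha$, not a stopping time, so you cannot simply invoke the Markov property there. The paper's argument is cleaner and uses only tools you already have in hand: $D_i$ is a stopping time for the argmin filtration $(\mathcal G_t)$, so the strong Markov property gives $T_{i+1}-D_i$ independent of $(G_i-T_i,\,D_i-G_i)$; your own time-reversal of the fragment then yields $G_i-T_i$ independent of $(D_i-G_i,\,T_{i+1}-D_i)$; these two conditional independences together imply mutual independence of the triple. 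You should replace the vague ``factorization of Laplace transforms'' by this two-step argument.

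\textbf{The L\'evy-system computation.} Your description of the middle zone---``$\alpha$ evolves in $(0,1]$ via drift $-1$ and further upward jumps''---overstates the dynamics and leads you to bring in the full transition kernel $Q_t$. By Proposition~\ref{argminBMsp}(2), once $\alpha$ jumps out of $0$ to a point $x\in(0,1)$ it drifts linearly at rate $-1$ and can only jump once more, to $1$; there are no intermediate upward jumps. Conditioning on $G_i$ being the last zero before $\rho_i=D_i-1$ amounts to requiring that after the jump from $0$ to $x$, the jump to $1$ occurs before the drift returns to $0$. Hence $D_i-1-G_i=t$ has density proportional to
\[
\int_t^1 \Pi^{0\uparrow}(dx)\, s(x,x-t)\,\mu^{\uparrow 1}(x-t),
\]
with $\Pi^{0\uparrow}$, $s$ and $\mu^{\uparrow 1}$ from Theorem~\ref{argminls} and Lemma~\ref{jr1}; normalizing yields \eqref{lawDG} directly. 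This is the paper's computation, and it is considerably simpler than tracking $Q_t$ under a no-return constraint.

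On the identification $T_{i+1}-D_i\stackrel{(d)}{=}J$: your direct Brownian argument via the strong Markov property at $D_i$ is valid (note $D_i>T_i+1$ and $T_i+1$ is an $\mathcal F^B$-stopping time, so $D_i$ is as well), and in fact slightly more concrete than the paper's route through Lemma~\ref{lct} and the age process of $\alpha$.
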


\begin{figure}[ht]
\begin{center}
\includegraphics[width=0.75 \textwidth]{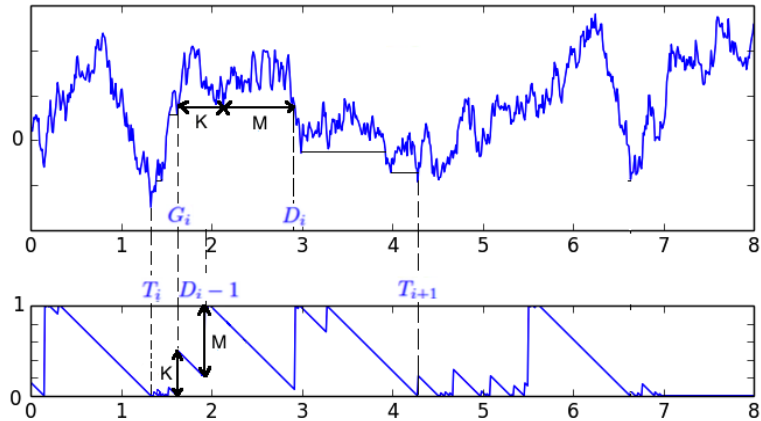}
\end{center}
\caption{$(T_i, G_i, D_i, T_{i+1})$ related to the argmin process $(\alpha_t; t \geq 0)$.}
\end{figure}

For a random variable $X$, let $\Phi_X(\lambda)$ be the Laplace transform of $X$, and $f_X(\cdot)$ be the density of $X$. Theorems \ref{Leust}-\ref{ABC} provide three different descriptions of the inter-arrival time $\Delta$. This leads to some non-trivial identities.
We summarize the results in the following table. 
    \vskip 10 pt
\begin{center}
    \begin{tabular}{| c | c | c |}
    \hline
               & & \\
               & Laplace transform $\Phi_{X}(\lambda)$ & Density $f_{X}(t)$  \\ 
               & &  \\ \hhline{|=|=|=|}
               & & \\
     $X = J$, Prop \ref{PropJ}  & $\displaystyle \frac{1}{\sqrt{\pi \lambda} \erf(\sqrt{\lambda}) + e^{-\lambda}}$ & Given by \eqref{PYdensity} \\ 
     & & \\ \hhline{|=|=|=|}
     & & \\
     $X = D - G$, Th \ref{ABC} & $\displaystyle \pi \lambda  \left[(\erf(\sqrt{\lambda})^2 -1)\right]+ 2 \sqrt{\pi \lambda} e^{-\lambda} \erf(\sqrt{\lambda})+ e^{-2\lambda}$ & $\displaystyle \frac{2-t}{t^2 \sqrt{t-1}} 1_{\{1<t<2\}}$ \\ 
     & & \\ \hhline{|=|=|=|}
     & & \\ 
    $X = \Delta$, Th \ref{Leust} & $\displaystyle \frac{\Psi(\lambda)}{1+ \Psi(\lambda)}$, with  & Given by \eqref{Gaplaw}, \\ 
    & $\displaystyle \Psi(\lambda) = \erf(\sqrt{\lambda})^2 - 1 + \frac{2 e^{-\lambda}}{\sqrt{\pi \lambda}} \erf(\sqrt{\lambda}) + \frac{e^{-2 \lambda}}{\pi \lambda}$ & with $a=b=1$  \\ 
    & & \\ \hline
    & & \\
     $X = \Delta$, Th \ref{minimabetlaw} &  $\displaystyle 1 - \pi \lambda (\Phi_J(\lambda))^2$ & $\displaystyle -\pi \frac{d}{dt} f^{*2}_J(t)$ \\
     & &   \\\hline
     & & \\
     $X = \Delta$, Th \ref{ABC} &  $\displaystyle (\Phi_J(\lambda))^2 \Phi_{D-G}(\lambda)$  &  $(f_J*f_{D-G}*f_J)(t)$   \\
     & & \\ \hhline{|=|=|=|}
     & & \\
     $X = T_1 -1$, Th \ref{Leust} & $\displaystyle \frac{1}{\pi \lambda} (1 - \Phi_{\Delta}(\lambda))$ &  $ \displaystyle \frac{1_{\{t>0\}}}{\pi} \int_{t}^{\infty} f_{\Delta}(s) ds$  \\ 
     & & \\ \hline
     & & \\
     $X = T_1-1 $, Th \ref{minimabetlaw} & $\displaystyle (\Phi_J(\lambda))^2$ & $f_J^{*2}(t)$ \\ 
     & & \\ \hline
    \end{tabular}\\
    \vskip 8 pt
    {TABLE $1$. Laplace transforms and densities.}
\end{center}

Finally, we extend Theorem \ref{Ftrans} to random walks and L\'evy processes. Fix $N \geq 1$. We study the argmin process $(A_N(n); n \geq 0)$ of a random walk $(S_n; n \geq 0)$, defined by
\begin{equation}
\label{argminchain}
A_N(n):=\sup \left\{1 \leq i \leq N; S_{n+i} = \min_{1\leq i \leq N} S_{n+i} \right\} \quad \mbox{for all}~n \geq 0,
\end{equation}
where $S_n:=\sum_{i=1}^n X_i$ is the $n^{th}$ partial sum of $(X_n; n \geq 1)$ (with convention $S_0:=0$), and $(X_n; n \geq 1)$ is a sequence of independent and identically distributed random variables with the cumulative distribution function $F$. This is  the discrete analog of the argmin process of Brownian motion.
A similar argument as in the Brownian case shows that $(A_N(n); n \geq 0)$ is a Markov chain. For $n \geq 1$, let
$$p_n: = \mathbb{P}(S_1 \geq 0, \cdots, S_n \geq 0) \quad \mbox{and} \quad \widetilde{p}_n: = \mathbb{P}(S_1 > 0, \cdots, S_n > 0).$$
Theorem \ref{GF} below recalls the classical theory of how the two sequences of probabilities $p_n$ and $\widetilde{p}_n$ are determined by the sequences of probabilities $\mathbb{P}(S_n \ge 0 )$ and $\mathbb{P}( S_n > 0 )$.
We give the transition matrix of the argmin chain $A_N$ in terms of $(p_n; n \geq 1)$ and $(\widetilde{p}_n; n \geq 1$), which can be made explicit for special choices of $F$. 

\begin{theorem}
\label{discreteth}
Whatever the common distribution $F$ of $(X_n; n \geq 0)$, the argmin chain $(A_N(n); n \geq 0)$ is a stationary and time-homogeneous Markov chain on $\{0,1, \ldots, N\}$. Let $\Pi_N(k)$, $k \in [0,N]$ be the stationary distribution, and $P_N(i,j)$, $i,j \in [0,N]$ be the transition probabilities of the argmin chain $(A_N(n); n \geq 0)$ on $[0,N]$. Then
\begin{equation}
\label{stationaryPi}
\Pi_N(k) = p_k \widetilde{p}_{N-k} \quad \mbox{for}~0 \leq k \leq N;
\end{equation}
\begin{equation}
\label{trans1}
P_N(i,N ) = 1 -\frac{\widetilde{p}_{N+1-i}}{\widetilde{p}_{N-i}} \quad \mbox{and} \quad P_N(i,i-1) = \frac{\widetilde{p}_{N+1-i}}{\widetilde{p}_{N-i}} \quad \mbox{for}~0<i \leq N;
\end{equation}
\begin{equation}
\label{trans2}
P_N(0,j ) = \frac{(p_j-p_{j+1}) \widetilde{p}_{N-j}}{\widetilde{p}_{N}} \quad \mbox{for}~0\leq  j < N \quad \mbox{and} \quad P_N(0,N) = 1 - \sum_{j=0}^{N-1} P_N(0,j).
\end{equation}
Consequently,
\begin{enumerate}
\item 
If $(S_n; n \geq 0)$ is a random walk with continuous distribution and $\mathbb{P}(S_n>0) = \theta \in (0,1)$ for all $n \geq 1$. Let $(\theta)_{n \uparrow}: = \prod_{i=0}^{n-1} (\theta + i)$ be the Pochhammer symbol. Then
\begin{equation}
\label{disarcsine}
\Pi_{N}(k) = \frac{(\theta)_{k \uparrow} (\theta)_{N-k \uparrow}}{k! (N-k)!} \quad \mbox{for}~0 \leq k \leq N;
\end{equation}
\begin{equation}
\label{eq5354}
P_N(i, N) = \frac{1-\theta}{N+1-i} \quad \mbox{and} \quad P_N(i,i-1) = \frac{N+\theta-i}{N+1-i} \quad \mbox{for}~0<i \leq N;
\end{equation}
\begin{equation}
\label{eq55}
P_N(0, j) =  \frac{1 - \theta}{j+1} \binom{N}{j} \frac{(\theta)_{j \uparrow} (\theta)_{N-j \uparrow}}{(\theta)_{N \uparrow}} \quad \mbox{for}~0 \leq j < N;
\end{equation}
and
\begin{equation}
\label{eq56}
P_N(0,N) = \frac{2(1-\theta)}{N+1} - \frac{(1 - 2 \theta)(2 \theta)_{N \uparrow}}{(N+1)(\theta)_{N \uparrow}}.
\end{equation}
\item
If $(S_n; n \geq 0)$ is a simple symmetric random walk. Let $\lfloor x \rfloor$ be the integer part of $x$. Then
\begin{equation}
\label{eq513}
\Pi_N(k) =   \frac{\displaystyle \left(\frac{1}{2}\right)_{ \lfloor{\frac{k+1}{2}}\rfloor \uparrow} \left(\frac{1}{2}\right)_{\lfloor \frac{N-k}{2} \rfloor \uparrow} }{\displaystyle 2 \cdot \left \lfloor \frac{k+1}{2} \right \rfloor ! \left \lfloor \frac{N-k}{2}  \right \rfloor !}  \quad \mbox{for}~0 \leq k \leq N;
\end{equation}
For $0 < i \leq N$,
\begin{equation}
\label{eq514}
P_N(i,N) =     \left\{ \begin{array}{ccl}
         \frac{N-i}{N+1-i} & \mbox{if}~ N-i ~\mbox{is odd}; \\ [8 pt]
         1 & \mbox{if} ~N-i ~\mbox{is even};
                \end{array}\right.
\mbox{and} ~
P_N(i, i-1) = 1 - P_N(i,N);
\end{equation}
for $0 \leq j <N$,
\begin{equation}
\label{eq515}
P_N(0,j) = \left\{ \begin{array}{ccl}
          0 & \mbox{if}~ j ~\mbox{is odd}; \\ [8 pt]
         \frac{\displaystyle \binom{j}{\frac{j}{2}} \binom{2 \lfloor \frac{N}{2} \rfloor-j}{\lfloor \frac{N}{2} \rfloor-\frac{j}{2}}}{\displaystyle(j+2)\binom{2 \lfloor \frac{N}{2} \rfloor}{ \lfloor\frac{N}{2} \rfloor}} & \mbox{if} ~j ~\mbox{is even};
                \end{array}\right.
\end{equation}
and
\begin{equation}
\label{eq516}
P_N(0,N) = \left\{ \begin{array}{ccl}
         \frac{1}{N+1} & \mbox{if}~ N ~\mbox{is odd}; \\ [8 pt]
         \frac{2}{N+2} & \mbox{if} ~N ~\mbox{is even}.
                \end{array}\right.
\end{equation}
\end{enumerate}
\end{theorem}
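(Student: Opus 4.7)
The plan is to exploit the decomposition of the walk at its last argmin in the window $[n,n+N]$, the analog of Denisov's decomposition used in the Brownian case. By exchangeability of the i.i.d.\ increments, the chain $(A_N(n))$ inherits stationarity from the space-time shift, so $\Pi_N(k)=\mathbb{P}(A_N(0)=k)$. I would split $\{A_N(0)=k\}$ at the index $k$ into the pre-argmin event (a condition on $X_1,\ldots,X_k$) and the post-argmin event (a condition on $X_{k+1},\ldots,X_N$), which are independent by the i.i.d.\ structure. Time-reversing the increments in the pre-argmin block identifies its probability with $p_k$ by the classical reversal lemma, while the post-argmin event is precisely a strict ascent of length $N-k$ with probability $\tilde p_{N-k}$; this gives \eqref{stationaryPi}.

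For transitions from state $i>0$, the crucial point is that given $A_N(n)=i$, the post-argmin block $(S_{n+i+\ell}-S_{n+i})_{\ell=0}^{N-i}$ has a conditional law depending only on $N-i$, while the fresh increment $X_{n+N+1}$ is independent of everything before time $n+N$. By the \emph{sup} convention, $A_N(n+1)=i-1$ iff $S_{n+N+1}>S_{n+i}$, and $A_N(n+1)=N$ otherwise. Computing the joint probability $\mathbb{P}(A_N(n)=i,\,A_N(n+1)=i-1)=p_i\,\tilde p_{N+1-i}$ and dividing by $\Pi_N(i)$ yields $P_N(i,i-1)=\tilde p_{N+1-i}/\tilde p_{N-i}$ and the complementary entry in \eqref{trans1}. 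The Markov property reduces to the observation that this conditional computation is unchanged when the entire history $(A_N(m))_{m\leq n}$ is conditioned on, because the post-argmin block and $X_{n+N+1}$ are conditionally independent of the past given $A_N(n)$, by the decomposition above.

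Transitions from state $0$ require a finer analysis. The joint event $\{A_N(n)=0,\,A_N(n+1)=j\}$ for $0\leq j<N$ locates the new argmin at $n+1+j$; since $S_{n+1+j}>S_n$ is automatic from the strict ascent of the old window, the constraints split at the index $n+1+j$ into two independent pieces: (i) a condition on $X_{n+1},\ldots,X_{n+1+j}$ asking that, after reversal, the first $j$ partial sums are non-positive while the $(j{+}1)$th is positive, which by an inclusion-exclusion on the last increment combined with the reversal lemma contributes the factor $p_j-p_{j+1}$; and (ii) a condition on $X_{n+2+j},\ldots,X_{n+N+1}$ that it is a strict ascent of length $N-j$, contributing $\tilde p_{N-j}$. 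Dividing by $\Pi_N(0)=\tilde p_N$ gives \eqref{trans2}, and $P_N(0,N)$ follows by complementation.

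The specializations reduce to substituting the explicit values of $p_n$ and $\tilde p_n$. In part (1), continuity together with $\mathbb{P}(S_n>0)=\theta$ yields $p_n=\tilde p_n=(\theta)_{n\uparrow}/n!$ via Spitzer's identity $\sum_n\tilde p_n z^n=(1-z)^{-\theta}$, from which \eqref{disarcsine}--\eqref{eq55} follow by direct substitution, and \eqref{eq56} is obtained by a Chu--Vandermonde summation on the outgoing transitions from state $0$. In part (2), the simple symmetric random walk admits explicit values of $p_n$ and $\tilde p_n$ from classical ballot-style enumeration (with separate formulas for even and odd $n$), which plugged into \eqref{stationaryPi}--\eqref{trans2} produce \eqref{eq513}--\eqref{eq516}. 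The main obstacle is the Markov property itself: as noted for the Brownian analog in Section \ref{s32}, the Dynkin/Rogers--Pitman lumping criteria do not apply, and one must instead exploit the specific decomposition of the walk at its argmin to show that the conditional law of the fresh increment and the post-argmin excursion depends on the history only through $A_N(n)$.
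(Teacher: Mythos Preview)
Your proposal is correct and follows essentially the same route as the paper: the Markov property is obtained by adapting the Denisov decomposition for random walks (the paper simply invokes its Proposition~\ref{Markov}), the stationary law $\Pi_N(k)=p_k\widetilde p_{N-k}$ is the discrete arcsine law, and the transition probabilities are computed by the same case split at $i>0$ versus $i=0$, using the conditional law of the post-argmin block together with the fresh increment and a time-reversal to identify $t_{j+1}=p_j-p_{j+1}$. The only minor divergence is in closing the formula for $P_N(0,N)$ in the specializations: you invoke a Chu--Vandermonde summation, whereas the paper squares the generating function $p(s)=(1-s)^{-\theta}$ (resp.\ uses $p(s)\widetilde p(s)=(1-s)^{-1}$ for the simple walk) to evaluate $\sum_{j}p_j p_{N-j}$ and then telescope---these are equivalent devices.
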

For the argmin chain $A_N$, the transition probability from $0$ to $N$ is given by \eqref{trans2} in the general case. But this probability is simplified to \eqref{eq56} and \eqref{eq516} in the two special cases. These identities are proved analytically by Lemmas \ref{lem11} and \ref{lem12}. We do not have a simple explanation, and leave combinatorial interpretations for the readers.

Let $(X_t; t \geq 0)$ be a real-valued L\'evy process. We consider the argmin process $(\alpha_t^X; t \geq 0)$ of $X$, defined by
\begin{equation}
\alpha_t^X : = \sup\left\{s \in [0,1]: X_{t+s}=\inf_{u \in [0,1]}X_{t+u} \right\} \quad \mbox{for all}~t \geq 0.
\end{equation}
We are particularly interested in the case where $X$ is a stable L\'evy process. We follow the notations in Bertoin \cite[Chapter VIII]{Bertoin}. Up to a multiple factor, a stable L\'evy process $X$ is entirely determined by a {\em scaling parameter} $\alpha \in (0,2]$, and a {\em skewness parameter} $\beta \in [-1,1]$. The characteristic exponent of a stable L\'evy process $X$ with parameters $(\alpha,\beta)$ is given by
\begin{equation*}
    \Psi(\lambda) : = \left\{ \begin{array}{ccl}        
  |\lambda|^{\alpha} (1 - i \beta \sgn(\lambda) \tan(\pi \alpha/2))  & \mbox{for}~\alpha \neq 1, \\ [8pt]
  |\lambda| (1 + i \frac{2 \beta}{\pi} \sgn(\lambda) \log|\lambda|)  & \mbox{for}~\alpha = 1
\end{array}\right.
\end{equation*}
where $\sgn$ is the sign function. Let $\rho : = \mathbb{P}(X_1>0)$ be the {\em positivity parameter}. Zolotarev \cite[Section 2.6]{Zolo} found that 
\begin{equation}
\label{positivity}
\rho = \frac{1}{2} + (\pi \alpha)^{-1}\arctan(\beta \tan(\pi \alpha/2)) \quad \mbox{for}~\alpha \in (0,2].
\end{equation}
If $X$ (resp. $-X$ ) is a subordinator, then almost surely $\alpha^X_t =0$ (resp. $\alpha^X_t = 1$) for all $t \geq 0$. The following theorem is a generalization of Theorem \ref{Ftrans}.

\begin{theorem}
\label{mainLevy}
~
\begin{enumerate}
\item
Let $(X_t; t \geq 0)$ be a L\'evy process. Then the argmin process $(\alpha^X_t; t \geq 0)$ of $X$ is a stationary and time-homogeneous Markov process.
\item
Let $(X_t; t \geq 0)$ be a stable L\'evy process with parameters $(\alpha,\beta)$, and assume that neither $X$ nor $-X$ is a subordinator. 
Let $\rho$ be defined by \eqref{positivity}. Then the argmin process $(\alpha^X_t; t \geq 0)$ of $X$ has generalized arcsine distributed invariant measure whose density is
\begin{equation}
f(x) : = \frac{\sin \pi \rho}{\pi} x^{-\rho} (1-x)^{\rho-1} \quad \mbox{for}~0<x<1,
\end{equation}
and Feller transition semigroup $Q^X_t(x, \cdot)$, $t >0$ and $x \in [0,1]$ where
\begin{equation}
\label{Qtrans}
Q^X_t(x,dy) =  \left\{ \begin{array}{ccl}        
 1_{\{0<y<1\}} \frac{\sin \pi \rho}{\pi} y^{-\rho} (1-y)^{\rho-1} dy  & \mbox{for}~0 \leq x \leq 1 <t, \\ [8pt]
 \left(\frac{1-x}{1-x+t}\right)^{1-\rho} \delta_{x-t}(dy) +  \frac{\sin \pi \rho}{\pi} \cdot \frac{ (1-y)^{\rho-1} (y+t-1)_{+}^{\rho} }{(y+t-x)}dy & \mbox{for}~0< t \leq x \leq 1,\\ [8pt]
 \frac{\sin \pi \rho}{\pi (y+t-x)} y^{-\rho} (1-y)^{\rho-1} [(t-x)^{\rho}(1-x)^{1-\rho} + y^{\rho}(y+t-1)_{+}^{1-\rho}] dy  & \mbox{for}~0 \leq x<t \leq 1.
\end{array}\right.
\end{equation} 
\end{enumerate}
\end{theorem}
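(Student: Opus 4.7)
The plan is to follow the same strategy as for Brownian motion (Theorem~\ref{Ftrans}), with two modifications: stationarity will require only the invariance of the space-time shift, while the Markov property and the explicit kernel in the stable case will rely on the analogue of Denisov's decomposition at the infimum, combined with Bertoin's generalized arcsine law.

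\textbf{Step 1 (Stationarity and the general Markov property).} For any L\'evy process $X$, the space-time shifts $\Theta_t^X := (X_{t+u} - X_t;\, u \geq 0)$ are stationary in $t$ by the stationary-independent increments, and $\alpha_t^X = \alpha_0^X \circ \Theta_t^X$ exactly as in \eqref{funcm}; this gives the stationarity of $(\alpha_t^X)$ at once. For the Markov property, I would adapt the argument behind Theorem~\ref{Ftrans}: it suffices to check that the conditional law of the shifted path $(X_{t+\alpha_t^X + s} - X_{t+\alpha_t^X};\, s \geq 0)$ given $\mathcal{F}_t^{\alpha}$ depends on the past only through $\alpha_t^X$. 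This is obtained by splitting on whether $\alpha_t^X > 0$ (in which case $t+\alpha_t^X$ is a site of a strict local minimum and the post-minimum path launches the next positive excursion) or $\alpha_t^X = 0$ (a right-endpoint argmin), and applying the strong Markov property of $X$ at the argmin on $[t,t+1]$.

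\textbf{Step 2 (Invariant measure in the stable case).} For stable $(\alpha,\beta)$ processes that are not subordinators, the position of the infimum on $[0,1]$ is almost surely unique and is generalized-arcsine distributed with parameter $\rho$, by Bertoin's extension \cite[Chapter VIII]{Bertoin} of L\'evy's arcsine law. This identifies the invariant density $f(x) = \frac{\sin\pi\rho}{\pi} x^{-\rho}(1-x)^{\rho-1}$ of $(\alpha_t^X)$.

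\textbf{Step 3 (Explicit transition kernel).} I would split into the same three regimes as in the Brownian proof. When $t>1$ the intervals $[0,1]$ and $[t,t+1]$ are disjoint, so by independence of increments $\alpha_t^X$ has the stationary law, yielding the first line. For $0<t\leq x\leq 1$, the old argmin at $t_0 = \alpha_0^X$ still lies inside $[t,t+1]$; either it remains the argmin over $[t,t+1]$ (producing the atom at $x-t$), or a new, deeper minimum arises in $(t,t+1]$, giving the absolutely continuous part. The atom weight $((1-x)/(1-x+t))^{1-\rho}$ will come from the probability that the right-hand positive excursion straddling $t_0$, already known to have length $\geq 1-x$, persists for a further $t$ units, which is $\overline{n}(1-x+t)/\overline{n}(1-x) = ((1-x)/(1-x+t))^{1-\rho}$ by the scaling of It\^o's excursion measure for the stable process. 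In the third regime $0\leq x<t\leq 1$, the new argmin always lies in $(t,t+1]$, and the Denisov-type decomposition of $X$ at its infimum on $[0,t+1]$ splits the joint law of the old argmin, the new argmin and the intervening excursion into independent pieces, whose marginals are governed by the two ladder processes with indices $\rho$ and $1-\rho$. A direct but tedious computation, parallel to Sections~\ref{s33}--\ref{s35} with $\tfrac12$ replaced by $\rho$ wherever the Brownian arcsine exponent appears, yields the claimed density.

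\textbf{Step 4 (Feller property).} Once the kernel is written down explicitly, the Feller property (continuity of $x \mapsto Q_t^X(x,\cdot)$ in the weak topology and $Q_t^X f \to f$ as $t \downarrow 0$ for $f \in C[0,1]$) follows by inspection of the formulas, as in Kallenberg~\cite[Chapter~19]{Kallenberg}.

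The main obstacle will be Step~3: justifying the Denisov decomposition for stable processes and reading off the joint density of the time and overshoot of the infimum with the correct powers of $\rho$ and $1-\rho$. In particular, the scaling and the asymmetry in the excursion measure of $X$ above its infimum (which is no longer symmetric between $X$ and $-X$ when $\rho \neq \tfrac12$) will have to be tracked carefully in order to recover the factor $\sin(\pi\rho)/\pi$ and the asymmetric density in the third line.
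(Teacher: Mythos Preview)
Your overall strategy---stationarity from the shift, Markov from a Denisov-type splitting, then the explicit kernel via excursion scaling---matches the paper's route closely, and Steps~2--4 are essentially what the paper does (the atom weight in Step~3 is exactly the computation of Remark~\ref{jremark} with the Monrad--Silverstein formula ${\bf n}(\zeta>t)\propto t^{\rho-1}$ in place of the Brownian $t^{-1/2}$, after which the kernel is obtained by rerunning Section~\ref{s35} with $\rho$ replacing $\tfrac12$).

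There is, however, a genuine gap in Step~1. You write that the Markov property follows by ``applying the strong Markov property of $X$ at the argmin on $[t,t+1]$''. This fails: the argmin time $t+\alpha_t^X$ is \emph{not} a stopping time for $X$, so the strong Markov property does not apply there. In the Brownian proof of Proposition~\ref{Markov}, the conditional independence of the pre- and post-argmin pieces comes from Denisov's decomposition (Theorem~\ref{DenisovBM}), which is a separate path-decomposition result, not an instance of the strong Markov property. For general L\'evy processes you need the L\'evy analogue of that decomposition. The paper obtains it by invoking Millar's splitting theorem \cite{Millar78} (stated as Theorem~\ref{M78}): for any L\'evy process that is not compound Poisson with drift, conditionally on the a.s.\ unique argmin location $A$, the pre-$A$ and post-$A$ pieces are independent. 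The compound Poisson case is then handled separately via Denisov's decomposition for random walks (Theorem~\ref{DenisovRW}), and the two are combined into Corollary~\ref{haha}, which covers \emph{all} L\'evy processes. Only with this in hand does the argument of Proposition~\ref{Markov} go through verbatim.

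So to repair Step~1: replace the appeal to the strong Markov property by a citation of Millar's decomposition (and note the compound-Poisson case separately). You already invoke a ``Denisov-type decomposition'' in Step~3, so the needed ingredient is present in your plan---it just has to be moved up and named correctly as the tool that gives the Markov property in the first place.
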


\vskip 12 pt
{\bf Organization of the paper:} The layout of the paper is as follows. 
\begin{itemize}
\item
In Section \ref{s2}, we provide background and necessary tools which will be used later. 
\item
In Section \ref{s3}, we study the argmin process $(\alpha_t; t \geq 0)$ of Brownian motion, and prove Theorem \ref{Ftrans}.
\item
In Section \ref{s4}, we study the $(a,b)$-minima of Brownian motion with an emphasis on the case $a=b=1$. There we prove Theorems \ref{Leust}, \ref{minimabetlaw} and \ref{ABC}.
\item 
In Section \ref{s5}, we consider the argmin process of random walks and L\'evy processes, and prove Theorems \ref{discreteth} and \ref{mainLevy}.
\end{itemize}
%-----------------------------------------------------------------------------------------------------------------------------------------------------------------------------------------
\section{Background and tools}
\label{s2}
This section recalls some background of Brownian motion. In Section \ref{s21}, we consider Denisov's decomposition for Brownian motion. In Section \ref{s22}, we recall various results from Brownian excursion theory. 
%---------------------------
\subsection{Path decomposition of Brownian motion}
\label{s21}
Let $(B_t; t \geq 0 )$ be standard Brownian motion. A Brownian meander $(m_t; 0 \leq t \leq 1)$ can be regarded as the weak limit of 
$$\left(B_t; 0 \leq t \leq 1 \Bigg|\inf_{0 \leq s \leq 1}B_s>-\epsilon \right) \quad \mbox{as} ~\epsilon \downarrow 0.$$
We refer to Durrett et al. \cite{DIM} for a proof. A Brownian meander of length $x$, say $(m^x_t; 0 \leq t \leq x)$ is defined as
\begin{equation*}
m^x_t:= \sqrt{x} m_{t / x} \quad \mbox{for}~0 \leq t \leq x.
\end{equation*}
In particular, $m^x_x$ is Rayleigh distributed with scale parameter $\sqrt{x}$. That is,
\begin{equation}
\label{meanderRay}
\mathbb{P}(m^x_x \in dy) = \frac{y}{x} \exp\left(-\frac{y^2}{2x}\right)dy \quad \mbox{for}~y>0.
\end{equation}
Consequently, 
\begin{equation}
\label{RayEV}
\mathbb{E}m^x_x = \sqrt{\frac{\pi x}{2}} \quad \mbox{and} \quad \Var m^x_x = \frac{4-\pi}{2} x.
\end{equation}

The following path decomposition is due to Denisov.
\begin{theorem}[Denisov's decomposition for Brownian motion,  \cite{Denisov}]
\label{DenisovBM} 
Let $A: = \argmin_{u \in [0,1]} B_u$ be the time at which Brownian motion $B$ attains its minimum on $[0,1]$. Given $A$, which is arcsine distributed,
the Brownian path is decomposed into two conditionally independent pieces:
\begin{enumerate}[(a).]
\item
$(B_{A-t}-B_{A}; 0 \leq t \leq A)$ is a Brownian meander of length $A$;
\item
$(B_{A+t}-B_{A}; 0 \leq t \leq 1-A)$ is a Brownian meander of length $1-A$.
\end{enumerate}
\end{theorem}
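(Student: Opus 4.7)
The plan is to follow exactly the same architecture as the proof of Theorem \ref{Ftrans}, replacing each Brownian ingredient (arcsine law, Brownian meander, Denisov's decomposition, self-similarity of exponent $1/2$) by its stable analog (generalized arcsine law with positivity parameter $\rho$, stable meander, Chaumont--Doney-type pre/post-minimum decomposition for Lévy processes, self-similarity of exponent $1/\alpha$).

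For part (1), stationarity is immediate: if $\Theta^X_t:=(X_{t+u}-X_t;\,u\ge 0)$ is the space-time shift of $X$, then the stationarity of the increments of a Lévy process gives $\Theta^X_t \stackrel{d}{=} X$, and since $\alpha^X_t=\alpha^X_0\circ\Theta^X_t$ as in \eqref{funcm}, $(\alpha^X_t;\,t\ge 0)$ is stationary. For the Markov property my approach is to copy the Brownian argument from Sections~\ref{s33}--\ref{s35}. The essential input there is Denisov's decomposition (Theorem~\ref{DenisovBM}) which makes the post-minimum fragment on $[A,1]$ conditionally independent of the pre-minimum fragment given $A$. For a general Lévy process $X$ without fixed discontinuities, the analog statement holds and is due to Chaumont and Doney: conditional on the position $A$ of the a.s.-unique minimum of $X$ on $[0,1]$, the two fragments are independent, and the post-minimum fragment is a ``Lévy meander'' (the process conditioned to stay positive in the sense of $h$-transform). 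With this in hand, the proof that $\alpha^X$ is Markov is the same coupling/pathwise argument as in the Brownian case: one writes $\alpha^X_{t+s}$ as a functional of $\alpha^X_t$ and of the independent fresh increments on $[t+1,t+1+s]$.

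For part (2), I would use scaling to reduce everything to explicit computation. Under the stable scaling $X_{ct}\stackrel{d}{=} c^{1/\alpha}X_t$, the distribution of $A:=\operatorname{argmin}_{u\in[0,1]}X_u$ is the classical generalized arcsine law of Dynkin--Lamperti--Zolotarev with density $\frac{\sin\pi\rho}{\pi}x^{-\rho}(1-x)^{\rho-1}$ on $(0,1)$, so Proposition~\ref{stationary} generalizes to give the claimed invariant measure. The transition kernel $Q^X_t(x,dy)$ is then derived by the same three-case analysis as for Theorem~\ref{Ftrans}:
\begin{itemize}
\item For $t>1$, the windows $[t,t+1]$ and $[0,1]$ are disjoint, so $\alpha^X_t$ is independent of $\alpha^X_0$ and the transition kernel is the invariant law.
\item For $0<t\le x\le 1$, the old argmin at $x-t$ is still inside $[t,t+1]$, and there is an atom at $y=x-t$ corresponding to ``no new record'' on $[1,1+t]$. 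Its mass is the probability that the stable meander of length $1-x$ stays strictly above the increment over $[1,1+t]$ of an independent stable process; by scaling this reduces to the universal constant $\bigl(\tfrac{1-x}{1-x+t}\bigr)^{1-\rho}$, which in the Brownian case is $\sqrt{(1-x)/(1-x+t)}$.
\item For $0\le x<t\le 1$, the old argmin has already fallen out of $[t,t+1]$, and the new argmin is determined by the interaction between the meander on $[x,1]$ and the fresh piece on $[1,1+t]$.
\end{itemize}
The explicit densities are obtained by combining the marginal density of the stable meander's endpoint (the analog of the Rayleigh formula \eqref{meanderRay}) with Denisov's decomposition applied to the fresh piece on $[1,1+t]$; the factor $(y+t-x)^{-1}$ comes from a Jacobian identical to the Brownian computation, and the $\sin\pi\rho/\pi$ prefactors come from the arcsine/generalized arcsine normalizing constant. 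The Feller property follows, as in Kallenberg~\cite[Chapter 19]{Kallenberg}, from continuity in $x$ of the explicit kernel and weak continuity in $t$ at $0$.

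The main obstacle is part (1) of the statement for a \emph{general} Lévy process: one needs the almost-sure uniqueness of $\operatorname{argmin}_{u\in[0,1]}X_u$ (to make $\alpha^X_t$ well defined without a $\sup$ convention causing issues) and a Lévy-process analog of Denisov's decomposition strong enough to supply the conditional independence that drives the Markov property. For Lévy processes without fixed discontinuities and with $0$ regular for both half-lines (which covers the stable case of part (2)), this is in the literature; for general Lévy processes one has to argue more carefully, either via an approximation by ``nice'' Lévy processes or via a direct application of the strong Markov property of $X$ at the first time after $t$ when $X$ hits a new minimum on $[t,t+1]$. A secondary technical point is the scaling argument used to collapse the mass of the $\delta_{x-t}$ atom to the closed-form $\bigl((1-x)/(1-x+t)\bigr)^{1-\rho}$; I would verify it by checking that at $\alpha=2,\beta=0,\rho=1/2$ the formulas of Theorem~\ref{mainLevy} reduce exactly to \eqref{Qtrans} of Theorem~\ref{Ftrans}, which serves both as a sanity check and as a consistent boundary condition.
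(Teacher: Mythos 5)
Your proposal does not address the statement at hand. The statement is Theorem \ref{DenisovBM}, Denisov's path decomposition of Brownian motion at the time $A$ of its minimum on $[0,1]$: conditionally on $A$ (which is arcsine distributed), the time-reversed pre-minimum fragment and the post-minimum fragment are independent Brownian meanders of lengths $A$ and $1-A$. What you have written is instead a proof sketch of Theorem \ref{mainLevy}, the stationarity/Markov/Feller result for the argmin process of a (stable) L\'evy process, organized around that theorem's parts (1) and (2), the generalized arcsine law, and the positivity parameter $\rho$. Nothing in your argument establishes the conditional independence of the two Brownian fragments given $A$, nor identifies either fragment as a meander; those two assertions are precisely the content of the theorem you were asked to prove.

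The argument is also circular relative to its intended target: at several points you invoke ``Denisov's decomposition'' (and its L\'evy analogues due to Millar, Chaumont--Doney and Uribe Bravo) as a known input, both to drive the Markov property and to compute the atom mass $\bigl((1-x)/(1-x+t)\bigr)^{1-\rho}$, so the statement to be proved is being assumed. A genuine proof of Theorem \ref{DenisovBM} would go through the discrete decomposition of a random walk at its last minimum time (the content of Theorem \ref{DenisovRW}), which is an exact identity for finite walks, followed by a weak-convergence step: the walk conditioned to stay positive (resp.\ nonnegative) converges to the Brownian meander in the sense of Bolthausen \cite{Bolmeander} and Durrett--Iglehart--Miller \cite{DIM}, while the argmin of the walk converges to the arcsine-distributed time $A$; alternatively one can argue in continuous time via excursion theory and a Williams-type time reversal. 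Note that the paper itself does not reprove this statement; it is quoted from \cite{Denisov} and used as a tool, e.g.\ in the proofs of Proposition \ref{Markov} and Theorem \ref{Ftrans}.
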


\quad Let\begin{equation}
\label{defPx}
{\bf P}^x : = \overset{\longleftarrow}{{\bf M}^x} \otimes \overset{\longrightarrow}{{\bf M}^{1-x}} \otimes {\bf W} \quad \mbox{for}~0 \leq x \leq 1
\end{equation}
be the law of two independent Brownian meanders of length $x$ and $1-x$ joined back-to-back, concatenated by an independent Brownian path running forever. Denisov's decomposition is equivalent to
\begin{equation}
{\bf W}(\cdot) = \int_0^1 \frac{1}{\pi \sqrt{x(1-x)}} {\bf P}^x(\cdot) dx.
\end{equation}
%It is closely related to Williams' decomposition \cite{Williams} by Brownian scaling, see Pitman and Ross \cite[Section $2$]{PitmanRoss}. Denisov's decomposition can also be obtained by random walks approximation, see Iglehart \cite{Igmeander} and Bolthausen \cite{Bolmeander}.
%---------------------------
\subsection{Brownian excursion theory}
\label{s22}
Let $(B_t; t \geq 0)$ be standard Brownian motion, and $\underline{B}_t:= \inf_{0 \leq s \leq t} B_s$ be the past-minimum process of $B$. For $l>0$, let $T_l:= \inf\{t>0; B_t<-l\}$ be the first time at which $B$ hits below level $-l$. Let 
$$\mathcal{D}:=\{l>0: T_{l-}<T_l\},$$
so that for $l \in \mathcal{D}$, 
$$e_l: = \left\{ \begin{array}{cll}        
 B_{T_{l-}+t} - \underline{B}_{T_{l-}+t}  & \mbox{for}~0 \leq t \leq T_l - T_{l-} \\ 
 0 & \mbox{for}~t > T_l - T_{l-}
\end{array}\right.
$$
is an excursion away from $-l$. Let $E$ be the space of excursions defined by
$$E:=\{\epsilon \in \mathcal{C}[0,\infty); \epsilon_0=0,~\epsilon_t>0~\mbox{for}~t \in (0,\zeta(\epsilon)),~\mbox{and}~\epsilon_t=0~\mbox{for}~t \geq \zeta(\epsilon)\},$$
where $\zeta(\epsilon):=\inf\{t>0; \epsilon_t=0\} \in (0,\infty)$ is the lifetime of the excursion $\epsilon \in E$. The following theorem is a special case of It\^{o}'s excursion theory.

\begin{theorem} \cite{Itoex}
\label{Itoexth}
The point measure
$$\sum_{l \in \mathcal{D}} \delta_{(l,e_l)}(dsd\epsilon)$$
is a Poisson point process on $\mathbb{R}^{+} \times E$ with intensity $ds \times {\bf n}(d \epsilon)$, where ${\bf n}(d \epsilon)$, called It\^{o}'s excursion law, is a $\sigma$-finite measure on $E$.
\end{theorem}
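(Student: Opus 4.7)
The plan is to deduce this It\^o excursion theorem from the strong Markov property of Brownian motion at the first-passage times $T_l$, by recognizing $(T_l; l \geq 0)$ as a subordinator and then lifting the L\'evy--It\^o decomposition of its jumps to full excursion paths.

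First I would verify that $(T_l; l \geq 0)$ is a c\`adl\`ag subordinator. Right-continuity in $l$ follows from the continuity of $B$, and independence and stationarity of increments follow from the strong Markov property at $T_l$: conditional on $\mathcal{F}_{T_l}$, the shifted process $(B_{T_l + s} + l; s \geq 0)$ is an independent standard Brownian motion, so $T_{l+h} - T_l$ is independent of $\mathcal{F}_{T_l}$ and distributed as $T_h$. The L\'evy--It\^o decomposition then represents the jump point measure of $T$ as a Poisson point process on $\mathbb{R}^{+} \times (0, \infty)$ with intensity $ds \times \mu(d\tau)$ for some L\'evy measure $\mu$ on $(0, \infty)$; moreover $T$ has zero drift because its right-continuous inverse is $(-\underline{B}_t)$, a process which increases only on a Lebesgue-null set of times.

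Next I would enrich each jump $(l, T_l - T_{l-})$ with the full excursion $e_l$. Fix $l \in \mathcal{D}$. The strong Markov property at the stopping time $T_{l-}$ ensures that the shifted process $(B_{T_{l-} + u} + l; u \geq 0)$ is an independent standard Brownian motion, and $T_l - T_{l-}$ is the first time that this Brownian motion goes strictly below $0$. Conditioning on $T_l - T_{l-} = \tau$ gives a Brownian path starting at $0$, remaining nonnegative on $[0, \tau]$ and strictly positive on $(0, \tau)$ with value $0$ at time $\tau$ --- this is the classical description of a Brownian excursion of length $\tau$, whose law I denote $\mathbb{P}_\tau^{\mathrm{exc}}$. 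Applying the strong Markov property inductively at successive jump times, the excursions $(e_l)_{l \in \mathcal{D}}$ are, conditional on the subordinator $T$, mutually independent with $e_l \sim \mathbb{P}_{T_l - T_{l-}}^{\mathrm{exc}}$.

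To assemble the PPP, I would truncate and take limits. For $\delta > 0$ set $\mathcal{D}_\delta := \{l \in \mathcal{D} : T_l - T_{l-} > \delta\}$, which is locally finite with rate $\mu((\delta, \infty)) < \infty$. Computing the Laplace functional of $\sum_{l \in \mathcal{D}_\delta} \delta_{(l, e_l)}$ applied to a bounded nonnegative test function, one splits over disjoint $l$-intervals using the independent increments of $T$ and applies the conditional excursion law from the previous step; this produces the Laplace functional of a Poisson point process on $\mathbb{R}^{+} \times E$ with intensity $ds \times {\bf n}_\delta$, where ${\bf n}_\delta(A) := \int_{\delta}^{\infty} \mu(d\tau)\, \mathbb{P}_\tau^{\mathrm{exc}}(A)$ for measurable $A \subset E$. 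Monotone convergence as $\delta \downarrow 0$ defines the $\sigma$-finite measure ${\bf n} := \int_0^\infty \mu(d\tau)\, \mathbb{P}_\tau^{\mathrm{exc}}$ on $E$ and yields the stated PPP. The principal technical hurdle is precisely this limiting step: verifying $\sigma$-finiteness of ${\bf n}$ (which amounts to $\mu((\delta, \infty)) < \infty$, automatic for any L\'evy measure, together with $\mathbb{P}_\tau^{\mathrm{exc}}$ being a probability measure for each $\tau > 0$) and justifying the passage of the Laplace functional identity through the monotone limit.
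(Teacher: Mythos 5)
The paper does not prove this statement; it is quoted from It\^o's 1971 paper, so your proposal has to stand on its own. Your overall strategy --- realize $(T_l;\, l\ge 0)$ as a driftless subordinator and then mark its jumps with the excursion paths --- is a recognized route, and your first step is sound: for \emph{fixed} $l$ the times $T_l$ and $T_{l-}=\inf\{t: B_t=-l\}$ are stopping times, the strong Markov property gives stationary independent increments, and the zero-drift argument via the inverse $-\underline{B}$ is correct.

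The gap is in the second step. When you ``fix $l\in\mathcal{D}$'' and invoke the strong Markov property at $T_{l-}$, that time is no longer a stopping time: the event $\{l\in\mathcal{D}\}$, i.e.\ that an excursion of positive length begins at $T_{l-}$, is determined by the path \emph{after} $T_{l-}$. (For deterministic $l$ one has $T_l=T_{l-}$ a.s., so the levels in $\mathcal{D}$ are exceptional random levels selected by peeking into the future.) The same objection applies to the ``inductive'' application at the successive jumps of $\mathcal{D}_\delta$: the right endpoints and the times at which the age process reaches $\delta$ are stopping times, but the left endpoints $T_{l-}$, $l\in\mathcal{D}_\delta$, are not, and it is exactly the independence of an excursion's shape from its starting level and from the pre-excursion path --- a splitting of the trajectory at these non-stopping times --- that constitutes the real content of It\^o's theorem. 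Your Laplace-functional computation cannot be ``split over disjoint $l$-intervals'' without it, and the conditioning on the null event $\{T_l-T_{l-}=\tau\}$ to produce $\mathbb{P}^{\mathrm{exc}}_\tau$ carries the same circularity. The standard proof sidesteps all of this: for fixed $l<l'$ the restriction of the point measure to $(0,l]\times E$ is $\sigma(B_s;\, s\le T_l)$-measurable, while its restriction to $(l,l']\times E$ is a functional of $(B_{T_l+s}-B_{T_l};\, s\ge 0)$ and hence independent of the former with a law depending only on $l'-l$; a simple point process with stationary independent increments and no fixed atoms is a Poisson random measure with intensity $ds\times {\bf n}$, and ${\bf n}$ is $\sigma$-finite because only finitely many excursions of length $>\delta$ occur before $T_l$. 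The $\delta\downarrow 0$ limit you single out as the principal hurdle is in fact the routine part; the difficulty sits where your sketch passes over it.
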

Here we consider positive excursions of the reflected process $B - \underline{B}$. So the measure ${\bf n}(d\epsilon)$ corresponds to $2 {\bf n}^{+}(d\epsilon)$ defined in Revuz and Yor \cite[Chapter XII]{RY}.

Let $\Lambda(dx)$ be the L\'{e}vy measure of a $\frac{1}{2}-$stable subordinator such that
\begin{equation}
\label{halfss}
\Lambda(dx) = \frac{dx}{\sqrt{2 \pi x^3}} \quad \mbox{and} \quad \Lambda(x,\infty) = \sqrt{\frac{2}{\pi x}}\quad \mbox{for}~x>0.
\end{equation}
By applying the master formula of Poisson point processes, we know that
\begin{equation}
\label{Itolength}
{\bf n} (\zeta \in dx) = \Lambda(dx).
\end{equation}
See Revuz and Yor \cite[Chapter XII]{RY} for development of Brownian excursion theory. 
Let 
\begin{equation}
\label{age}
A_t: = \sum_{l} 1_{\{T_{l-} \leq t \leq T_l\}} (t - T_{l-}),
\end{equation}
be the {\em age process} of excursions of $B-\underline{B}$, or equivalently of a $\frac{1}{2}$-stable subordinator.
The following proposition gathers useful results of $J$, defined by
$$J + 1:= \inf\{t > 0: A_t = 1\}.$$
That is, $J$ is the first descending ladder time of Brownian motion, from which starts an excursion above the minimum of length exceeding $1$. 
For completeness, we include a proof.
\begin{proposition}
\cite{PYbis}
\label{PropJ}
Let $J$ be the first descending ladder time of Brownian motion, from which starts an excursion above the minimum of length exceeding $1$. 
\begin{enumerate}
\item
The random variable $\frac{1}{1+J}$ has the same distribution as the longest interval of Poisson-Dirichlet $ (\frac{1}{2},0)$ distribution.
The Laplace transform of $J$ is given by \eqref{Laplace}, and
\begin{equation}
\label{meanfirst}
\mathbb{E}J = 1.
\end{equation}
\item
The density of $J$ is given by 
\begin{equation}
\label{PYdensity}
\frac{\mathbb{P}(J \in dt)}{dt} = \sum_{n=1}^{\infty} (-1)^{n+1}\frac{c_n}{\sqrt{t}} I_n\left(\frac{1}{1+t}\right) \quad \mbox{for}~t>0,
\end{equation}
where for each $n \geq 1$,
\begin{equation*}
c_n : = \frac{(n-1)!}{2^{n-1} \pi ^{n/2} \Gamma(n/2)},
\end{equation*}
and $I_n$ is a function supported on $(0,\frac{1}{n}]$ defined by
\begin{equation*}
I_n(u_{n}): =  \int \prod_{i=1}^{n-1} \sqrt{\frac{(1-u_i)^{n-1-i}}{u_i^{3}}} 1_{\left\{\frac{u_{i+1}}{1-u_{i+1}} \leq u_i \leq \frac{1}{i}\right\}} du_i \quad \mbox{for}~u_n \in \left(0, \frac{1}{n}\right],
\end{equation*}
with convention that $I_1(u_1) : = 1$ for $u_1 \in (0,1]$. 
Consequently,
\begin{equation}
\label{firstlaw}
\mathbb{P}(J \in dt) = \frac{dt}{\pi \sqrt{t}} \quad \mbox{for}~0<t \leq 1.
\end{equation}
%and
%\begin{equation}
%\label{sdlaw}
%\mathbb{P}(J \in dt) = \frac{1}{\pi}\left(\frac{2}{\sqrt{t}} - 1 \right) dt \quad \mbox{for}~1 <t \leq 2,
%\end{equation}
\end{enumerate}
\end{proposition}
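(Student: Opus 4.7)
\emph{Plan.} Part (1) will follow from Poisson thinning of It\^{o}'s excursion decomposition, which writes $J$ as the value of a truncated $\tfrac{1}{2}$-stable subordinator at an independent exponential time; a short Laplace-exponent computation then yields \eqref{Laplace} and $\mathbb{E}J = 1$. The Poisson--Dirichlet identification is the Pitman--Yor result from \cite{PYbis}. Part (2) will then follow by explicit Laplace inversion, or equivalently by translating Pitman--Yor's formula for the density of the largest atom of $\mathrm{PD}(\tfrac{1}{2}, 0)$ through the correspondence $V_1 \stackrel{(d)}{=} (1 + J)^{-1}$.

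\emph{Part (1).} By Theorem \ref{Itoexth}, the excursions $(e_l)_{l \in \mathcal{D}}$ of $B - \underline{B}$ form a Poisson point process of intensity $dl \otimes {\bf n}(d\epsilon)$, and \eqref{halfss}--\eqref{Itolength} give ${\bf n}(\zeta > 1) = \Lambda((1, \infty)) = \sqrt{2/\pi}$. Thinning into ``short'' ($\zeta \leq 1$) and ``long'' ($\zeta > 1$) components, the local time $L$ at which the first long excursion appears is exponential with rate $\sqrt{2/\pi}$, independent of the short-excursion Poisson point process, so $J = \sigma_L$, where $\sigma$ is an independent driftless subordinator with L\'evy measure $1_{\{0 < x \leq 1\}}\Lambda(dx)$. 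Using $\int_0^1 x^{-1/2} e^{-\lambda x}\,dx = \sqrt{\pi/\lambda}\,\erf(\sqrt{\lambda})$ (via $u = \sqrt{\lambda x}$) together with an integration by parts,
\[
\Phi(\lambda) := \int_0^1 (1 - e^{-\lambda x})\,\Lambda(dx) = \sqrt{2/\pi}\bigl[\sqrt{\pi \lambda}\,\erf(\sqrt{\lambda}) + e^{-\lambda} - 1\bigr].
\]
Conditioning on the exponential $L$ then gives
\[
\Phi_J(\lambda) = \frac{\sqrt{2/\pi}}{\sqrt{2/\pi} + \Phi(\lambda)} = \frac{1}{\sqrt{\pi \lambda}\,\erf(\sqrt{\lambda}) + e^{-\lambda}},
\]
which is \eqref{Laplace}; the Taylor expansion $\sqrt{\pi \lambda}\,\erf(\sqrt{\lambda}) + e^{-\lambda} = 1 + \lambda + O(\lambda^2)$ at $\lambda = 0$ yields $\mathbb{E}J = 1$.

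\emph{Part (2).} To obtain \eqref{PYdensity}, write $\Phi_J(\lambda) = 1/(1 + \psi(\lambda))$ where
\[
\psi(\lambda) = \sqrt{\pi \lambda}\,\erf(\sqrt{\lambda}) + e^{-\lambda} - 1 = \int_0^1 (1 - e^{-\lambda x})\,\frac{dx}{2 x^{3/2}},
\]
and expand geometrically as $\Phi_J(\lambda) = \sum_{n \geq 0} (-1)^n \psi(\lambda)^n$. Each $\psi(\lambda)^n$ is an $n$-fold integral against the $\tfrac{1}{2}$-stable L\'evy measure restricted to $(0, 1]^n$, and inverting term by term can be carried out via the substitution $u_i = x_i/\sum_{j \leq i} x_j$ suggested by the size-biased construction of $\mathrm{PD}(\tfrac{1}{2}, 0)$. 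This converts the integrand into $\prod_i \sqrt{(1-u_i)^{n-1-i}/u_i^3}$; the ranking of the ordered atoms translates into $u_i \leq 1/i$ (``at most $i$ Poisson--Dirichlet atoms can exceed $1/i$''), and the residual-mass feasibility translates into $u_{i+1}/(1 - u_{i+1}) \leq u_i$. The constant $c_n = (n-1)!/(2^{n-1} \pi^{n/2} \Gamma(n/2))$ collects the resulting Gamma-function normalisations, and the alternating sign $(-1)^{n+1}$ is the geometric-series coefficient. Finally, \eqref{firstlaw} is immediate: $I_n$ is supported on $(0, 1/n]$, so for $t \in (0, 1]$ with $(1+t)^{-1} \geq 1/2 > 1/n$ for every $n \geq 2$, only the $n = 1$ term survives, giving $c_1/\sqrt{t} = 1/(\pi \sqrt{t})$.

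\emph{Main obstacle.} The Laplace transform and the mean in Part (1) are routine. The real work lies in Part (2): justifying the term-by-term Laplace inversion (the formal coefficients in the expansion of $1/(1 + \psi)$ involve the infinite-mass $\tfrac{1}{2}$-stable L\'evy measure, so care is needed), identifying the correct substitution that simultaneously encodes the ranking, the residual-mass feasibility, and the stable L\'evy density, and matching the Gamma-function products to the stated $c_n$.
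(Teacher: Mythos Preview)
Your Part (1) is correct and essentially identical to the paper's argument: both use Poisson thinning of the excursion point process to write $J = \sigma_L$ for a truncated $\tfrac12$-stable subordinator $\sigma$ evaluated at an independent exponential time $L$ of rate $\sqrt{2/\pi}$, and both compute the Laplace exponent the same way. The only cosmetic difference is that the paper obtains $\mathbb{E}J = 1$ directly as $\mathbb{E}\sigma_1\cdot\mathbb{E}L = \sqrt{2/\pi}\cdot\sqrt{\pi/2}$, whereas you read it off from the Taylor expansion of $\Phi_J$; both are fine.

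For Part (2) the paper does not attempt a proof at all: it simply specializes Pitman--Yor \cite[Proposition~20]{PYbis} to $\alpha=\tfrac12$, $\theta=0$. Your proposal to expand $\Phi_J(\lambda) = 1/(1+\psi(\lambda))$ geometrically and invert term by term is a reasonable route toward \emph{rederiving} that result (and is close in spirit to how such formulae are obtained in \cite{PYbis}), but as you yourself note, it is only a sketch: the geometric series diverges for large $\lambda$ since $\psi(\lambda)\to\infty$, so the inversion has to be justified by other means (e.g.\ working with the measure $dx/(2x^{3/2})$ on $(0,1]$ directly, or via the size-biased sampling of $\mathrm{PD}(\tfrac12,0)$), and the change of variables producing the $I_n$ and $c_n$ is asserted rather than carried out. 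For the purposes of this proposition in this paper, citing \cite{PYbis} is exactly what is done; if you want a self-contained argument you would need to fill in those steps. Your deduction of \eqref{firstlaw} from \eqref{PYdensity} --- that only the $n=1$ term survives for $t\le 1$ because $(1+t)^{-1}\ge\tfrac12$ lies outside the support of $I_n$ for $n\ge 2$ --- is clean and correct.
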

\begin{proof}
The part $(1)$ is essentially from Pitman and Yor \cite[Corollary 12]{PYbis} with $\alpha = \frac{1}{2}$. Alternatively, let
$\tau := \inf\{l \in D: T_l - T_{l-}>1\}$ be the first level above which an excursion has length larger than $1$ so that $J = T_{\tau-}$.
As in \cite{GreenPit}, we deduce from Theorem \ref{Itoexth} that $\tau$ is exponentially distributed with rate $\Lambda(1,\infty)=\sqrt{2/\pi}$, independent of  $(B-\underline{B})[I_{\tau}]: = (B_t - \underline{B}_t; t \in I_{\tau})$ and that 
\begin{equation*}
J \stackrel{(d)}{=} \sigma_{\xi},
\end{equation*}
where 
\begin{itemize}
\item
$(\sigma_t; t \geq 0)$ is a $\frac{1}{2}-$stable subordinator with all jumps of size larger than $1$ deleted, so the Laplace exponent of $(\sigma_t; t \geq 0)$ is given by
$$\phi(\lambda) := \int_0^1 (1-e^{-\lambda x}) \Lambda(dx) = \sqrt{2 \lambda} \erf(\sqrt{\lambda}) - \sqrt{\frac{2}{\pi}}(1-e^{-\lambda}) \quad \mbox{for}~\lambda \geq 0.$$
\item
$\xi$ is exponentially distributed with rate $\sqrt{2/\pi}$, independent of $(\sigma_t; t \geq 0)$. 
\end{itemize}
So 
\begin{equation*}
\mathbb{E}J = \mathbb{E}\sigma_1 \mathbb{E}\xi = \sqrt{\frac{2}{\pi}} \cdot \sqrt{\frac{\pi}{2}} = 1,
\end{equation*}
and the Laplace transform of $J$ is given by
\begin{equation*}
\Phi_{J}(\lambda) = \frac{\sqrt{2/\pi}}{\sqrt{2/\pi} + \phi(\lambda)} = \frac{1}{\sqrt{\pi \lambda} \erf(\sqrt{\lambda}) + e^{-\lambda}}.
\end{equation*}
The part $(2)$ is obtained by specializing Pitman and Yor \cite[Proposition 20]{PYbis} to $\alpha = \frac{1}{2}$ and $\theta = 0$. 
\end{proof}

The following result can be read from Maisonneuve \cite[Section 8]{Maison} and Bolthausen \cite{Bolmeander}. An alternative approach was provided by Greenwood and Pitman \cite{GreenPit}, and Pitman \cite[Sections 4 and 5]{PitmanLevy}. 
\begin{theorem} \cite{Maison, Bolmeander}
\label{exmeander}
The process 
$$(B-\underline{B})[J, J+1]: = (B_{J+t} - \underline{B}_{J+t}; 0 \leq t \leq 1)$$
is a Brownian meander of length $1$, independent of $(B_u; 0 \leq u \leq J)$.
\end{theorem}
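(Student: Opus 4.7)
The plan is to prove the statement directly from It\^o's excursion theory, exploiting the Poisson point process structure of Theorem \ref{Itoexth}. Let $\tau := \inf\{l \in \mathcal{D} : \zeta(e_l) > 1\}$ be the first local-time level at which an excursion of length exceeding $1$ appears; by \eqref{halfss}--\eqref{Itolength} one has ${\bf n}(\zeta > 1) = \sqrt{2/\pi}$, so $\tau$ is a.s.\ finite, $T_{\tau-} = J$, and $e_\tau$ is the excursion straddling $J$.

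Standard Poisson thinning applied to the point process of Theorem \ref{Itoexth} yields three facts: (i) $\tau$ is exponentially distributed with rate $\sqrt{2/\pi}$; (ii) the truncated configuration $\{(l, e_l) : l < \tau\}$ is independent of $e_\tau$; and (iii) conditionally, $e_\tau$ has law ${\bf n}(\,\cdot \mid \zeta > 1)$. Since $(B_u; 0 \leq u \leq J)$ is a measurable functional of $\tau$ together with $\{(l, e_l) : l < \tau\}$, it is independent of $e_\tau$. Moreover, $J$ is a descending ladder time and $\zeta(e_\tau) > 1$, so $\underline{B}_{J+t} = \underline{B}_J = B_J$ for every $t \in [0, 1]$; consequently $(B - \underline{B})[J, J+1]$ coincides with $((e_\tau)_t)_{0 \leq t \leq 1}$. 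It therefore remains only to identify the law of the first unit of $e_\tau$ under ${\bf n}(\,\cdot \mid \zeta > 1)$ as that of a Brownian meander of length $1$.

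This last identification is the main technical step. I would carry it out using the entrance law and Markov structure of It\^o's measure: for $0 < t < 1$ and a Borel set $A \subset (0, \infty)$,
\begin{equation*}
{\bf n}(\epsilon_t \in A,\, \zeta > 1) \;=\; \int_A \frac{y}{\sqrt{2\pi t^3}}\, e^{-y^2/(2t)}\, \mathbb{P}_y(\tau_0 > 1 - t)\, dy,
\end{equation*}
where $\tau_0$ is the hitting time of $0$ by a Brownian motion started at $y > 0$. Combined with the fact that the semigroup of $\epsilon$ on $(0, \zeta)$ under ${\bf n}$ is the Brownian semigroup killed at $0$, this identifies the restriction of $\epsilon$ to $[0,1]$ under ${\bf n}(\,\cdot \mid \zeta > 1)$ with Brownian motion started at $0$ and conditioned to remain positive on $(0, 1]$, which is one standard construction of the Brownian meander of length $1$. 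A consistency check is that, after normalization by $\sqrt{2/\pi}$, the endpoint density at $t = 1$ reduces to the Rayleigh density in \eqref{meanderRay} with $x = 1$. The main obstacle I anticipate is precisely this identification: one must reconcile the entrance-law description of the conditioned It\^o excursion with the Durrett--Iglehart--Miller definition of the meander as the weak limit of $(B_t; 0 \leq t \leq 1)$ conditioned on $\underline{B}_1 > -\varepsilon$ as $\varepsilon \downarrow 0$. One route is to express both laws as absolutely continuous perturbations of Brownian motion killed at $0$ and then verify equality of the two Radon--Nikodym densities.
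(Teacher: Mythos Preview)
The paper does not supply its own proof of this theorem; it is quoted from Maisonneuve and Bolthausen, with the remark that ``an alternative approach was provided by Greenwood and Pitman \cite{GreenPit}, and Pitman \cite[Sections 4 and 5]{PitmanLevy}.'' Your argument via It\^o's excursion theory and Poisson thinning is exactly this alternative route, and it is correct: the first-record decomposition of the Poisson point process gives the independence of $e_\tau$ from $(\tau, \{(l,e_l): l<\tau\})$, the path $(B_u; 0\le u\le J)$ is measurable with respect to the latter, and the identification of the first unit of an excursion conditioned on $\{\zeta>1\}$ with the Brownian meander is the standard description of the meander under It\^o's measure.

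One small correction: with the paper's normalization, ${\bf n}(\zeta>t)=\sqrt{2/(\pi t)}$, so the entrance density should be $\sqrt{2/(\pi t^3)}\, y\, e^{-y^2/(2t)}$, twice what you wrote. This is harmless for the conditional law ${\bf n}(\,\cdot\mid\zeta>1)$ and for the Rayleigh consistency check, but it is worth getting right so that the displayed integral actually normalizes to ${\bf n}(\zeta>1)=\sqrt{2/\pi}$.
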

%-----------------------------------------------------------------------------------------------------------------------------------------------------------------------------------------
\section{The argmin process of Brownian motion}
\label{s3}
In this section, we study the argmin process $\alpha$ of Brownian motion defined by \eqref{argminBM}. In Section \ref{s31}, we deal with the sample path properties of $\alpha$. In Section \ref{s33}, we provide a conceptual proof that the argmin process $\alpha$ is a Markov process with the Feller property. In Section \ref{s34}, we study the jumps of $\alpha$ by means of a L\'evy system. In Section \ref{s35}, we compute the transition kernel of $\alpha$, and prove Theorem \ref{Ftrans}. Finally in Section \ref{s32}, we explain why Dynkin's criterion and the Rogers-Pitman criterion do not apply to the argmin process $\alpha$.
%---------------------------
\subsection{Sample path properties}
\label{s31}
We have mentioned in the introduction that the argmin process $(\alpha_t; t \geq 0)$ takes values in $[0,1]$, and drifts down at unit speed except for positive jumps. More precisely, we provide the following proposition.
\begin{proposition} 
\label{argminBMsp}
Let $(\alpha_t; t \geq 0)$ be the argmin process of Brownian motion. Then a.s.
 \begin{enumerate}
 \item \label{boundedincb} 
 $\alpha_t \in [0,1]$ for all $t \geq 0$, and $(t+\alpha_t; t \geq 0)$ is increasing;
 \item \label{jumptypb}
$(\alpha_t; t \geq 0)$ decreases at unit speed except for

$(i)$ jumps from $0$ to some $x \in (0,1)$;

$(ii)$ jumps from some $x \in (0,1)$ to $1$.
 \end{enumerate}
 \end{proposition}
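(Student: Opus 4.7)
My approach is to reformulate everything through the auxiliary process $u^\ast_t := t + \alpha_t$, namely the last location of the minimum of $B$ on $[t, t+1]$.

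For part~(1), $\alpha_t \in [0,1]$ follows directly from the definition \eqref{argminBM}. To prove $(u^\ast_t)_{t \geq 0}$ is non-decreasing, fix $t < s$. If $s > t+1$ then $u^\ast_s \geq s > t+1 \geq u^\ast_t$. Otherwise split $[s, s+1] = [s, t+1] \cup (t+1, s+1]$. If $u^\ast_t \in [s, t+1]$, then $u^\ast_t$ remains the last minimizer on $[s, s+1]$ provided $\inf_{(t+1, s+1]} B > B_{u^\ast_t}$, giving $u^\ast_s = u^\ast_t$; otherwise a later minimizer in $(t+1, s+1]$ takes over, yielding $u^\ast_s \geq t+1 > u^\ast_t$. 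If $u^\ast_t < s$ then $u^\ast_s \geq s > u^\ast_t$ trivially.

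For part~(2), the same case analysis shows that while the non-increasing, right-continuous function $s \mapsto \inf_{u \in (t+1, s+1]} B_u$ stays strictly above $B_{u^\ast_t}$, the value $u^\ast_s = u^\ast_t$ is constant and $\alpha_s = u^\ast_t - s$ decreases at unit speed. Jumps of $u^\ast$ (equivalently of $\alpha$) then split into two types. If $\alpha_{\tau-} > 0$, the jump closes such a unit-speed phase with pre-jump minimizer $u^\ast_{\tau-} \in (\tau, \tau+1)$; the first time the right-end infimum drops to $B_{u^\ast_{\tau-}}$ forces $B_{\tau+1} = B_{u^\ast_{\tau-}}$ by continuity of $B$, so the new last minimizer is $\tau+1$ and $\alpha_\tau = 1$, which is type~(ii). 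If $\alpha_{\tau-} = 0$, then $\alpha$ has been identically $0$ on some left neighbourhood $[\sigma, \tau)$, during which $B_s$ was the strict minimizer on $[s, s+1]$; passing $s \uparrow \tau$ shows $B_\tau$ is a minimizer on $[\tau, \tau+1]$, and the jump arises from a second minimizer $u^\ast_\tau \in (\tau, \tau+1]$ at time $\tau$, so $\alpha_\tau = u^\ast_\tau - \tau > 0$, which is type~(i).

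Finally, the boundary endpoints are excluded almost surely by Brownian regularity: $\alpha_\tau = 1$ in type~(i) would require the coincidence $B_{\tau+1} = B_\tau$ together with the interior multi-minimizer condition at $\tau$, while $\alpha_{\tau-} = 0$ in type~(ii) would similarly entail a non-generic two-sided coincidence. Both are eliminated by applying the strong Markov property at the natural filtration-adapted times and using that the minimum of an independent Brownian increment on a unit interval is almost surely uniquely attained at an interior point. The main obstacle of a rigorous write-up is this a.s.\ exclusion of boundary cases and a careful definition of \emph{between jumps}; both are handled cleanly once everything is phrased in terms of the monotone auxiliary process $u^\ast$.
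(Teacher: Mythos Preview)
Your approach via the monotone auxiliary process $u^\ast_t = t + \alpha_t$ is essentially the same as the paper's, and parts (1) and the case split in (2) are handled correctly and in the same spirit.

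The one genuine gap is in your final paragraph, where you exclude the boundary case $\alpha_\tau = 1$ in type~(i) (equivalently, a jump directly from $0$ to $1$). Your proposed justification---strong Markov at ``natural filtration-adapted times'' together with a.s.\ uniqueness of the interior minimizer---does not go through as stated: the jump time $\tau$ is defined by a condition on $B$ over $[\tau,\tau+1]$, so it is \emph{not} a stopping time of the Brownian filtration, and you cannot invoke strong Markov at $\tau$ to make the increment on $[\tau,\tau+1]$ independent of the past. What actually needs to be ruled out is the existence of some $\tau$ with $B_\tau = B_{\tau+1} = \min_{[\tau,\tau+1]} B$, i.e.\ an excursion above level $B_\tau$ of length exactly $1$ (or two local minima at the same level, which is already standard). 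The paper handles this by citing the result of Pitman--Tang that no space-time shift of Brownian motion contains an excursion of length exactly $1$; an equivalent route is via It\^o excursion theory, since excursion lengths are the jumps of a $\tfrac{1}{2}$-stable subordinator and almost surely none of them equals $1$. You should replace the strong Markov sentence with one of these arguments.
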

\begin{proof}
\eqref{boundedincb} The fact $\alpha_t \in [0,1]$ is straightforward from the definition. Let $0 \leq t < t'$. 
\begin{itemize}
\item
If $t'> t+ \alpha_t$, then $t'+\alpha_{t'} >  t+ \alpha_t$. 
\item
If $t' \leq t+ \alpha_t$, then $B_{t+\alpha_t} \leq  B_u$ for all $u \in [t', t+\alpha_t]$. This implies that $\alpha_{t'} \geq t+\alpha_t-t'$. 
\end{itemize}
\eqref{jumptypb} Observe that $(\alpha_t; t \geq 0)$ is a c\`{a}dl\`{a}g process with only positive jumps. We first check $(i)$. If $\alpha_{t-}=0$ for some $t>0$, then $B_u \geq B_t$ for all $u \in [t,t+1]$. We distinguish two cases. In the first case, $B_u > B_t$ for all $u \in [t,t+1]$, which implies that $\alpha_t = 0$. In the second case, $B_u = B_t$ for some $u \in (t,t+1]$, and let $x: = \sup\{u \in (0,1]: B_{t+u} = B_t\} $. If $x=1$, then there exists an excursion of length $1$ in Brownian motion by a space-time shift. But this is excluded by Pitman and Tang \cite[Theorem 4]{PTpattern}. Thus, $\alpha_t = x \in (0,1)$.
\begin{figure}[ht]
\includegraphics[width=1\textwidth]{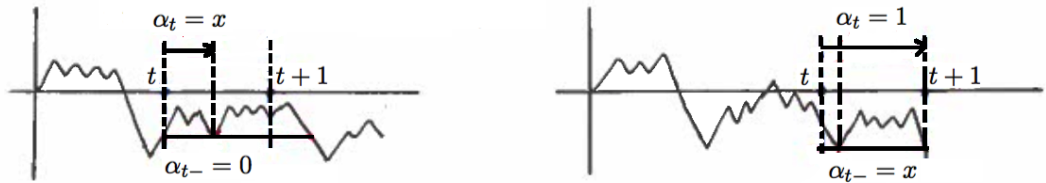}
\caption{LEFT: A jump from $0$ to some $x \in (0,1)$ in the argmin process. RIGHT:  A jump from some $x \in (0,1)$ to $1$ in the argmin process.}
\end{figure}

It remains to check $(ii)$. If $\alpha_{t-}=x \in (0,1)$ for some $t > 0$, then $B_{t+x} < B_u$ for all $u \in (t+x, t+1)$. We also distinguish two cases. In the first case, $B_{t+1} > B_{t+x}$ which implies that $\alpha_t = x$. In the second case, $B_{t+1} = B_{t+x}$ which yields $\alpha_t = 1$.
\end{proof}

Next we prove a time reversal property of the argmin process $\alpha$. By convention, $\alpha_{0-} = \alpha_0$.
\begin{proposition}
\label{timereversal}
For each fixed $T>0$, $(1-\alpha_{(T-t) -}; 0 \leq t \leq T)$ has the same distribution as $(\alpha_t; 0 \leq t \leq T)$.
\end{proposition}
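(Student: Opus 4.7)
The plan is to realize the time-reversed argmin process as the argmin process of a time-reversed Brownian motion, and then identify the left-limit version of $\alpha$ with the ``first argmin'' process.

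First I would introduce the time-reversed Brownian motion on the relevant horizon. Since $(\alpha_t;\,0\le t\le T)$ depends on $(B_u;\,0\le u\le T+1)$, I set
\begin{equation*}
\widetilde B_s := B_{T+1-s}-B_{T+1}, \qquad 0\le s\le T+1,
\end{equation*}
which is again a standard Brownian motion (restricted to $[0,T+1]$). A direct change of variable $r=1-s$ then gives, for $t\in[0,T]$,
\begin{equation*}
\widetilde\alpha_t := \sup\!\left\{s\in[0,1]:\widetilde B_{t+s}=\inf_{u\in[0,1]}\widetilde B_{t+u}\right\}
= 1-\inf\!\left\{r\in[0,1]: B_{T-t+r}=\inf_{v\in[T-t,\,T+1-t]}B_v\right\}.
\end{equation*}
In other words, $\widetilde\alpha_t=1-\alpha^{\mathrm{first}}_{T-t}$, where
$\alpha^{\mathrm{first}}_s:=\inf\{r\in[0,1]:B_{s+r}=\inf_{u\in[0,1]}B_{s+u}\}$
is the \emph{first} location at which $B$ attains its minimum over $[s,s+1]$.

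The next step, which is the only non-cosmetic one, is to show that $\alpha^{\mathrm{first}}_s=\alpha_{s-}$ for all $s>0$, almost surely. At any $s$ where the minimum on $[s,s+1]$ is attained uniquely, both sides trivially equal $\alpha_s$ and $\alpha$ is continuous at $s$. The jump times of $\alpha$ are, by Proposition~\ref{argminBMsp}(\ref{jumptypb}), of two types. At a type~(i) jump $\alpha_{s-}=0\to\alpha_s=y\in(0,1)$, the minimum on $[s,s+1]$ is attained at both $s$ and $s+y$, so $\alpha^{\mathrm{first}}_s=0=\alpha_{s-}$; at a type~(ii) jump $\alpha_{s-}=x\in(0,1)\to\alpha_s=1$, the minimum is attained at $s+x$ and $s+1$, so $\alpha^{\mathrm{first}}_s=x=\alpha_{s-}$. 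In each case $\alpha^{\mathrm{first}}_s=\alpha_{s-}$. (This reasoning also covers the boundary values with the convention $\alpha_{0-}=\alpha_0$.)

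Combining these two observations,
\begin{equation*}
\widetilde\alpha_t = 1-\alpha_{(T-t)-}\qquad\text{for every }t\in[0,T].
\end{equation*}
Since $\widetilde B$ is a standard Brownian motion, its argmin process $(\widetilde\alpha_t;\,0\le t\le T)$ has the same finite-dimensional distributions as $(\alpha_t;\,0\le t\le T)$ in the path-space sense, and the conclusion follows. The only delicate point is the identification $\alpha^{\mathrm{first}}=\alpha_{-}$, which is why the proposition is stated with left limits $\alpha_{(T-t)-}$ rather than $\alpha_{T-t}$.
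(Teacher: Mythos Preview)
Your proof is correct and takes the same route as the paper: reverse $B$ on $[0,T+1]$ and identify $1-\alpha_{(T-t)-}$ with the argmin process $\widetilde\alpha$ of the reversed path. The paper simply asserts the identity $\widetilde\alpha_t=1-\alpha_{(T-t)-}$ without details, whereas you justify it via $\alpha^{\mathrm{first}}=\alpha_{-}$; the only point to tighten is the type~(ii) case, where you should also rule out an argmin in $[0,x)$ (this follows since distinct Brownian local minima occur at distinct levels, or alternatively because two c\`adl\`ag processes agreeing Lebesgue-a.e.\ must agree everywhere).
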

\begin{proof}
Observe that $(1 -\alpha_{(T-t) -}; 0 \leq t \leq T)$ is also a c\`adl\`ag process. Let 
\begin{equation*}
\widetilde{B} :=(B_{T+1-u}-B_{T+1}; 0 \leq u \leq T+1) \stackrel{(d)}{=} (B_u; 0 \leq u \leq T+1).
 \end{equation*}
Let $\widetilde{\alpha}$ be the argmin process of $\widetilde{B}$ on $[0,T]$. Hence,
$$(1-\alpha_{(T-t) -}; 0 \leq t \leq T) = (\widetilde{\alpha}_t; 0 \leq t \leq T) \stackrel{(d)}{=} (\alpha_t; 0 \leq t \leq T).$$
\end{proof} 

%\begin{remark}
%{\em Let $T>0$. Note that $\alpha_{T-t} = \alpha_{(T-t)-}$ a.s. for each fixed $t \leq T$, but not simultaneously for all $t \leq T$. This implies that the c\`agl\`ad $(1 - \alpha_{T-t}; 0 \leq t \leq T)$ has the same finite dimensional distributions as the c\`adl\`ag $(\alpha_t; 0 \leq t \leq T)$.
%}
%\end{remark}
%---------------------------
\subsection{Markov and Feller property}
\label{s33}
We provide a soft argument to prove that $(\alpha_t; t \geq 0)$ is a Markov process, and enjoys the Feller property. 

For each $t \geq 0$, let
\begin{equation}
\mathcal{G}_t: = \sigma(B_{s}; 0 \leq s \leq t + \alpha_t),
\end{equation}
be the $\sigma$-field generated by the path $B$ killed at time $t+ \alpha_t$. 
By Proposition \ref{argminBMsp} \eqref{boundedincb}, $t \mapsto t+\alpha_t$ is increasing.
It is not hard to see that for any $s<t$, $s + \alpha_s$ is a measurable function of the path $B$ killed at $t + \alpha_t$. So $(\mathcal{G}_t)_{t \geq 0}$ is a filtration. Now we show that
\begin{proposition}
\label{Markov}
The argmin process $(\alpha_t; t \geq 0)$ is time-homogeneous Markov with respect to $(\mathcal{G}_t)_{t \geq 0}$. 
\end{proposition}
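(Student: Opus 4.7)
The plan is to combine Denisov's decomposition (Theorem~\ref{DenisovBM}) with the ordinary Markov property of Brownian motion at the deterministic time $t+1$, and to show that, conditional on $\mathcal{G}_t$, the shifted future path $(B_{t+\alpha_t+s}-B_{t+\alpha_t};\,s\ge 0)$ has a law depending only on $\alpha_t$. Since $\alpha_{t+u}$ will then be seen to be a measurable functional of $\alpha_t$ together with this future, the Markov property follows, and time-homogeneity is automatic from the stationarity of Brownian increments.

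To establish this conditional law of the future, first note that $(B_{t+s}-B_t;\,s\ge 0)$ is a Brownian motion independent of $\mathcal{F}_t:=\sigma(B_r;\,r\le t)$. Applying Theorem~\ref{DenisovBM} on the window $[0,1]$, conditional on $\alpha_t$ the backward meander of length $\alpha_t$ and the forward meander of length $1-\alpha_t$ (both rooted at $t+\alpha_t$) are conditionally independent. Applying the Markov property of $B$ at the deterministic time $t+1$ further shows that the post-$(t+1)$ Brownian continuation is independent of $\mathcal{F}_{t+1}$, hence of these two meanders. Since $\mathcal{G}_t=\mathcal{F}_t\vee\sigma(\alpha_t,\text{backward meander})$, combining these independences gives that, given $\mathcal{G}_t$, the pair (forward meander, post-$(t+1)$ Brownian continuation) has a joint law depending only on $\alpha_t$: a meander of length $1-\alpha_t$ followed by an independent Brownian motion.

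Finally, for each $u\ge 0$ I express $\alpha_{t+u}$ as a measurable function of $\alpha_t$ and this future. When $u\ge\alpha_t$ the window $[t+u,t+u+1]$ sits inside $[t+\alpha_t,\infty)$ and the claim is immediate. When $u<\alpha_t$ the window overlaps the backward-meander segment on $[t+u,t+\alpha_t]$; the main obstacle here is resolved by the strict positivity of meander excursions, which forces the backward segment to lie strictly above $B_{t+\alpha_t}$ off its right endpoint, and therefore to contribute neither to the minimum nor to the last time it is attained on $[t+u,t+u+1]$. Concretely, on the event $\{\min_{s\in[t+1,t+u+1]}B_s>B_{t+\alpha_t}\}$ one has $\alpha_{t+u}=\alpha_t-u$, while on the complementary event $\alpha_{t+u}$ is the last argmin of $B$ on $[t+1,t+u+1]$; both descriptions are measurable in $\alpha_t$ and the conditional future. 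Extending this description to finitely many times $u_1\le\cdots\le u_n$, the conditional joint law of $(\alpha_{t+u_1},\ldots,\alpha_{t+u_n})$ given $\mathcal{G}_t$ depends only on $\alpha_t$ and is translation-invariant in $t$, which is precisely the time-homogeneous Markov property.
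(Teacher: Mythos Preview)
Your proposal is correct and takes essentially the same approach as the paper: apply Denisov's decomposition on the window $[t,t+1]$ together with the Markov property of $B$ at time $t+1$ to see that, given $\alpha_t$, the post-$(t+\alpha_t)$ path (forward meander of length $1-\alpha_t$ followed by independent Brownian motion) is conditionally independent of $\mathcal{G}_t$, and then observe that for $s>t$ the argmin $\alpha_s$ is a measurable functional of this post-$(t+\alpha_t)$ path alone. Your case analysis for $u<\alpha_t$ versus $u\ge\alpha_t$ is a slightly more explicit version of the paper's one-line remark that for $s>t$ the minimum of $B$ on $[s,s+1]$ cannot be attained on $[s,t+\alpha_t)$.
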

\begin{proof}
Fix $t>0$. By Denisov's decomposition (Theorem \ref{DenisovBM}), given $\alpha_t=x$ (i.e. the minimum of $B$ on $[t,t+1]$ is attained at $t+x$), the Brownian path is decomposed into four independent components:
\begin{itemize}
\item
$(B_{t-s}-B_t; 0 \leq s \leq t)$ is Brownian motion of length $t$;
\item
$(B_{t+x-s}-B_{t+x}; 0 \leq s \leq x)$ is a Brownian meander of length $x$;
\item
$(B_{t+x+s}-B_{t+x}; 0 \leq s \leq 1-x)$ is a Brownian meander of length $1-x$;
\item
$(B_{t+1+s}-B_{t+1}; s \geq 0)$ is Brownian motion running forever.
\end{itemize}
\begin{figure}[ht]
\begin{center}
\includegraphics[width=0.65 \textwidth]{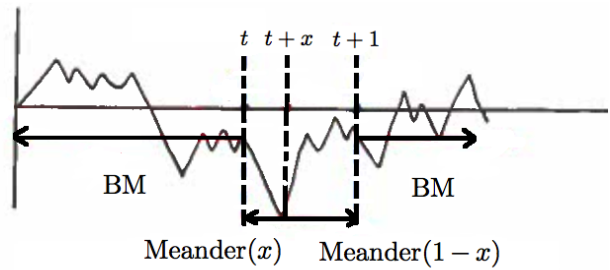}
\end{center}
\caption{Decomposition of Brownian motion given $\alpha_t=x$.}
\end{figure}
As a consequence, $(B_{t+x+s}-B_{t+x}; s \geq 0)$ and $(B_{t+x-s}-B_{t+x}; 0 \leq s \leq t+x)$ are conditionally independent. Observe that given $\alpha_t=x$,
\begin{itemize}
\item
for $s>t$, the minimum of $B$ on $[s,s+1]$ cannot be attained on $[s,t+x)$. So $(\alpha_s; s>t)$ is entirely determined by the path $(B_{t+x+s}-B_{t+x}; s \geq 0)$.
\item
for $s<t$, the minimum of $B$ on $[s,s+1]$ cannot be attained on $(t+x,s+1]$. So $(\alpha_s; s<t)$ is entirely determined by the path $(B_{t+x-s}-B_{t+x}; 0 \leq s \leq t+x)$.
\end{itemize}
These observations imply that $(\alpha_t; t \geq 0)$ is Markov relative to $(\mathcal{G}_t)_{t \geq 0}$. The time-homogeneity follows from the fact that given $\alpha_t=x$, the law of $(B_{t+x+s}-B_{t+x}; s \geq 0)$ does not involve the time parameter $t$.
\end{proof}

We now investigate the Feller property of the argmin process $(\alpha_t; t \geq 0)$. Recall the definition of ${\bf P}^x$ from \eqref{defPx}. Let
\begin{equation}
\label{argmincd}
\alpha_t^x : =\alpha_t(B) \quad \mbox{for}~t \geq 0,
\end{equation}
 be the argmin process of $(B_t; t \geq 0)$ under ${\bf P}^x$, which makes $\alpha_0=x \in [0,1]$. By Denisov's decomposition (Theorem \ref{DenisovBM}), for all $f: \mathcal{C}[0,\infty) \rightarrow \mathbb{R}$ bounded and continuous, 
\begin{equation*} 
\mathbb{E}^{\bf W} f(\alpha_t; t \geq 0) = \int_0^1 \frac{dx}{ \pi \sqrt{ x (1-x)}} \mathbb{E} f(\alpha^x_t; t \geq 0).
\end{equation*}
where $\mathbb{E}^{\bf W}$ is the expectation relative to ${\bf W}$.

The Feller property of $(\alpha_t; t \geq 0)$ follows from a direct computation of the transition semigroup $Q_t(x,\cdot)$ of $(\alpha^x_t; t \geq 0)$, which will be given in Section \ref{s35}. But here we provide a conceptual proof.

\begin{proposition}
 \label{Fellerconcept}
The argmin process $(\alpha_t; t \geq 0)$ enjoys the Feller property, and is a strong Markov process.
\end{proposition}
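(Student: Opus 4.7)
The plan is to verify the two Feller axioms: (i) for every $t > 0$ and $f \in C([0,1])$, the map $x \mapsto Q_t f(x) := \mathbb{E} f(\alpha_t^x)$ is continuous on $[0,1]$; and (ii) $Q_t f(x) \to f(x)$ as $t \downarrow 0$ for each $f \in C([0,1])$ and $x \in [0,1]$. Since Proposition \ref{Markov} already supplies the time-homogeneous Markov property, the strong Markov property will then follow from the general Feller theory \cite[Chapter 19]{Kallenberg}.

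The main device is a pathwise coupling of the measures $\{{\bf P}^x\}_{x \in [0,1]}$ on a single probability space. Let $m$ and $m'$ be two independent standard Brownian meanders of length $1$ and let $W$ be an independent standard Brownian motion. For each $x \in [0,1]$ set
\begin{equation*}
B^x_u := \left\{ \begin{array}{ll} \sqrt{x}\, m'_{(x-u)/x} & \text{for}~ 0 \leq u \leq x, \\ \sqrt{1-x}\, m_{(u-x)/(1-x)} & \text{for}~ x \leq u \leq 1, \\ B^x_1 + W_{u-1} & \text{for}~ u \geq 1, \end{array}\right.
\end{equation*}
with the obvious degenerate readings at $x \in \{0,1\}$. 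By Brownian scaling of the meander and \eqref{defPx}, $B^x \sim {\bf P}^x$ for every $x$, and since $m$, $m'$, $W$ have almost surely uniformly continuous paths on bounded intervals, the map $x \mapsto B^x$ is almost surely continuous in the topology of uniform convergence on compact subintervals of $[0,\infty)$. Let $\alpha_t^x$ denote the argmin process of $B^x$ as in \eqref{argmincd}.

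To prove (i), fix $t > 0$, $x_0 \in [0,1]$ and $f \in C([0,1])$. The rightmost-argmin functional $w \mapsto \sup\{s \in [0,1]: w_{t+s} = \inf_{u \in [0,1]} w_{t+u}\}$ is continuous under uniform convergence on $[t, t+1]$ at every path whose minimum on $[t,t+1]$ is uniquely attained. Granted that under ${\bf P}^{x_0}$ the path $B^{x_0}$ almost surely has a unique argmin on $[t, t+1]$ (see next paragraph), combining this with $B^x \to B^{x_0}$ uniformly on $[0, t+1]$ yields $\alpha_t^x \to \alpha_t^{x_0}$ almost surely, and bounded convergence then gives $Q_t f(x) \to Q_t f(x_0)$. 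Claim (ii) is immediate from Proposition \ref{argminBMsp}, which guarantees c\`adl\`ag sample paths with $\alpha_0^x = x$, so that $\alpha_t^x \to x$ almost surely as $t \downarrow 0$, and bounded convergence again yields $Q_t f(x) \to f(x)$.

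The main technical point will be the almost-sure uniqueness of the argmin of $B^{x_0}$ on $[t,t+1]$. For $t \geq 1$ this is the classical uniqueness of the argmin of Brownian motion on a fixed interval. For $0 \leq t < 1$ the interval $[t,t+1]$ may straddle the Denisov breakpoint $x_0$ and the meander/Brownian transition at $u = 1$, but on each of the (at most three) sub-intervals cut out by these breakpoints the path $B^{x_0}$ is a scaled Brownian meander (possibly time-reversed) or a Brownian motion with fixed left endpoint, and in each case the minimum has a diffuse density and is almost surely uniquely attained; patching these pieces, the global minimum on $[t, t+1]$ is almost surely uniquely attained as well.
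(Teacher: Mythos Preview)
Your proposal is correct and follows essentially the same approach as the paper's proof: both reduce the Feller property to Kallenberg's two-condition criterion, use the explicit scaling construction of Brownian meanders to obtain continuity of ${\bf P}^x$ in $x$, and then appeal to the a.s.\ continuity of the argmin functional at paths with a unique minimizer. Your version makes the coupling $x \mapsto B^x$ explicit and upgrades the paper's weak continuity to a.s.\ convergence, and for condition (ii) you invoke right-continuity of the sample paths rather than the paper's more hands-on observation that $\alpha_t^x = x - t$ for small $t$ (with $x=1$ treated separately); both are valid, and the differences are cosmetic.
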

\begin{proof}
 According to Kallenberg \cite[Lemma 19.3]{Kallenberg}, it suffices to show that
\begin{enumerate}
\item
for each $t \geq 0$, $\alpha_t^x \rightarrow \alpha_t^y$ in distribution as $x \rightarrow y$;
\item
for each $x \in [0,1]$, $\alpha_t^x \rightarrow x$ in probability as $t \rightarrow 0$.
\end{enumerate}
We first prove $(1)$. For $t \geq 1$, $\alpha_t^x$ and $\alpha_t^y$ are both arcsine distributed regardless of $x,y \in [0,1]$. Consider the case $t \in (0,1)$. By Denisov's decomposition (Theorem \ref{DenisovBM}), for all $f: \mathcal{C}[0,2] \rightarrow \mathbb{R}$ bounded and continuous,
\begin{equation*}
\mathbb{E}^{\bf W} f(B_u; 0 \leq u \leq 2) = \int_0^1 \frac{dx}{\pi \sqrt{x(1-x)}} \mathbb{E}^{{\bf P}^x} f(B_u; 0 \leq u \leq 2).
\end{equation*}
By the explicit scaling construction of Brownian meanders, the law of $(B_u; 0 \leq u \leq 2)$ under ${\bf P}^x$ is weakly continuous in $x$. Moreover, for each $t \in (0,1)$, $\alpha_t(w)$ is an a.s. continuous functional of $(w_u; 0 \leq u \leq 2)$, from which follows $(1)$.

It remains to prove $(2)$. Observe that for $t<1$, $1-t \leq \alpha_t^1 \leq 1$. So $(2)$ is proved in case of $x=1$. For $x \in [0,1)$, by Denisov's decomposition (Theorem \ref{DenisovBM}),
\begin{equation*}
\inf\{u \geq 1; w_u \leq w_x\}>1 \quad {\bf P}^x\mbox{-}a.s.
\end{equation*}
Therefore, $\mathbb{P}(\alpha_t^x=x-t~ \mbox{for}~t~\mbox{close to}~ 0)=1$, which leads to the desired result.
\end{proof}
%---------------------------
\subsection{Jumps and L\'evy system}
\label{s34}
We study the jumps of the argmin process $\alpha$. Recall the definitions of a $\frac{1}{2}-$stable subordinator, and the age process of a $\frac{1}{2}$-stable subordinator from \eqref{halfss} and \eqref{age}. We begin with the following observation.
\begin{lemma}
\label{lct}
Let $\rho: =\inf\{t>0; \alpha_t=1\}$ and $\tau:=\inf\{t>\rho; \alpha_t=0\}$. Then $(1-\alpha_{\rho+t}; 0 \leq t \leq \tau - \rho)$ has the same distribution as the age process of a $\frac{1}{2}-$stable subordinator until it first reaches $1$.
\end{lemma}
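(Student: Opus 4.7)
The plan is to reduce the problem to a direct reading of the post-$(\rho+1)$ Brownian fragment. I first note that $\rho + 1$ is a stopping time of the natural filtration $(\mathcal{F}_t)_{t \ge 0}$: since $\alpha_s$ is $\mathcal{F}_{s+1}$-measurable, one has $\{\rho + 1 \le t\} = \{\rho \le t-1\} \in \mathcal{F}_t$. Almost sure finiteness of $\rho$ and the fact that $\alpha_\rho = 1$ (rather than $\alpha$ ``missing'' the value $1$) follow from the jump classification in Proposition~\ref{argminBMsp}, together with the eventual occurrence of a Brownian excursion above the running minimum of length exceeding $1$ (cf. Proposition~\ref{PropJ}). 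By the strong Markov property, $B^*(s) := B_{\rho+1+s} - B_{\rho+1}$, $s \ge 0$, is a standard Brownian motion, independent of $\mathcal{F}_{\rho+1}$.

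Next I would prove the key identity
\[
1 - \alpha_{\rho + s} \;=\; s - u^*(s), \qquad s \in [0, \tau - \rho],
\]
where $\underline{B^*}(s) := \inf_{r \le s} B^*(r)$ and $u^*(s) := \sup\{u \in [0, s] : B^*(u) = \underline{B^*}(u)\}$. To verify this, I split the sliding window $[\rho+s, \rho+s+1]$ into its pre- and post-$(\rho+1)$ portions. On the pre-$(\rho+1)$ portion the minimum of $B$ equals $B_{\rho+1}$, attained at $\rho+1$ (and also at $\rho + \alpha_{\rho-}$ when $s \le \alpha_{\rho-}$); on the post-$(\rho+1)$ portion it equals $B_{\rho+1} + \underline{B^*}(s)$. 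Because $\underline{B^*}(s) \le 0$, the overall minimum is $B_{\rho+1} + \underline{B^*}(s)$, and the rightmost argmin lies at $\rho + 1 + u^*(s)$, whence $\alpha_{\rho+s} = 1 + u^*(s) - s$. The hypothesis $s < \tau - \rho$ guarantees $\alpha_{\rho+s} > 0$, so the rightmost argmin is never the left endpoint $\rho + s$, and the formula persists throughout the interval.

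The final step is to identify $(s - u^*(s))_{s \ge 0}$ with the age process of a $\tfrac{1}{2}$-stable subordinator. By classical first-passage theory, $T^*_l := \inf\{s : B^*(s) < -l\}$ is a $\tfrac{1}{2}$-stable subordinator with L\'evy measure $\Lambda$ from \eqref{halfss}; its range is the closure of $\{s : B^*(s) = \underline{B^*}(s)\}$ and its jump intervals $(T^*_{l-}, T^*_l)$ are precisely the excursion intervals of $B^* - \underline{B^*}$ above $0$. Hence $s - u^*(s)$ coincides with the age $A_s$ of $T^*$ in the sense of \eqref{age}, and $\tau - \rho$ is by definition the first time this age reaches $1$. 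The main obstacle will be the careful handling of the key identity in the boundary case $s \in [0, \alpha_{\rho-}]$, where one must verify that the sup in the definition of $\alpha$ selects $\rho + 1 + u^*(s)$ over the older equal-valued candidate $\rho + \alpha_{\rho-}$; this is immediate because $\alpha_{\rho-} < 1 \le 1 + u^*(s)$.
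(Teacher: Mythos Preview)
Your approach is the same as the paper's: invoke the strong Markov property at the stopping time $\rho+1$ and identify $1-\alpha_{\rho+\cdot}$ with the age process of the excursions of $B^*-\underline{B^*}$ above $0$. The paper compresses this into two sentences; you have written out the mechanism.

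There is, however, one real gap. Your window--splitting argument for the identity $1-\alpha_{\rho+s}=s-u^*(s)$ tacitly assumes that $\rho+1$ lies in the window $[\rho+s,\rho+s+1]$, i.e.\ $s\le 1$. But $\tau-\rho>1$ almost surely (at $s=1$ the age equals $1-u^*(1)<1$), so you must also treat $s\in(1,\tau-\rho]$, where the window lies entirely to the right of $\rho+1$ and its minimum is $\min_{r\in[s-1,s]}B^*(r)$, not $\underline{B^*}(s)$ in general. The repair is short: set $\sigma:=\inf\{s\ge0:s-u^*(s)=1\}$ and first show the identity on $[0,\sigma)$. For $s\in(1,\sigma)$ one has $u^*(s)>s-1$, hence $u^*(s)\in(s-1,s]$ and $B^*(u^*(s))=\underline{B^*}(s)\le\min_{[s-1,s]}B^*$; this forces $u^*(s)$ to be the last argmin of $B^*$ on $[s-1,s]$, giving $\alpha_{\rho+s}=1+u^*(s)-s>0$ and hence $s<\tau-\rho$. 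At $s=\sigma$ the same reasoning yields $\alpha_{\rho+\sigma}=0$, so $\tau-\rho=\sigma$. Incidentally, the boundary case $s\le\alpha_{\rho-}$ you single out is not the obstacle: for $s>0$ one has $\underline{B^*}(s)<0$ a.s., so the post-$(\rho+1)$ minimum is strictly below $B_{\rho+1}=B_{\rho+\alpha_{\rho-}}$ and the old candidate is never in competition.
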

\begin{proof}
By the strong Markov property of Brownian motion, $(B_{\rho+1+u}-B_{\rho+1}; u \geq 0)$ is still Brownian motion. It is not hard to see that $(1-\alpha_{\rho+t}; 0 \leq t \leq \tau - \rho)$ is the age process derived from excursions above the past minimum of $(B_{\rho+1+u}-B_{\rho+1}; u \geq 0)$ until this post$-(\rho+1)$ Brownian motion escapes its past minimum by time $1$. This yields the desired result.
\end{proof}

By Lemma \ref{lct}, let $(l^1_t; t \geq 0)$ be the local times of $\alpha$ at level $1$, normalized to match the $\frac{1}{2}-$stable subordinator. By time-reversal of $\alpha$ (Proposition \ref{timereversal}), define similarly $(l^0_t; t \geq 0)$ to be the local times of $\alpha$ at level $0$. By stationarity of $\alpha$,  
\begin{equation*}
\mathbb{E}l^1_t / t  = \mathbb{E}l^0_t / t = c \quad \mbox{for all}~t > 0.
\end{equation*}
We will prove in Corollary \ref{lctrate} that the constant $c=1/\sqrt{2 \pi}$. These stationary local times also appeared in the work of Leuridan \cite{Leuridan}. 

Before proceeding further, we need the following terminology. Let $(X_t; t \geq 0)$ be a Hunt process on a suitably nice state space $E$, e.g. locally compact and separable metric space. The pair $(\Pi,C)$ constituted of a kernel $\Pi$ on $E$ and a continuous additive functional $C$ is said to be a {\em L\'evy system} for $X$ if for all bounded and measurable function $f$ on $E \times E$,
\begin{equation}
\label{Levysystemdef}
\mathbb{E}\left(\sum_{0<s \leq t} f(X_{s-},X_s)1_{\{X_{s-} \neq X_s\}}\right) = \mathbb{E}\left(\int_0^t dC_s \int_E \Pi(X_{s-},dy) f(X_{s-},y)\right).
\end{equation}
The kernel $\Pi$ is called the {\em L\'evy measure} of the additive functional $C$. The notion of a L\'evy system was formulated by Watanabe \cite{Wata}, the existence of which was proved for a Hunt process under additional assumptions. The proof was simplified by Beneviste and Jacod \cite{BJ}. See also Meyer \cite{Meyer71}, Pitman \cite{PitmanLevy} and Sharpe \cite[Chapter VIII]{Sharpe} for development.

By Proposition \ref{Fellerconcept}, the argmin process $(\alpha_t; t \geq 0)$ is a Hunt process. Also define a continuous additive functional $C$ by
\begin{equation}
\label{argminaf}
C_t = t + l_t^0 \quad \mbox{for}~t \geq 0.
\end{equation}
The main result is stated as follows, the proof of which relies on Lemmas \ref{jr1} and \ref{jr0}.
\begin{theorem}
\label{argminls}
Let $(\alpha_t; t \geq 0)$ be the argmin process of Brownian motion, and $(C_t; t \geq 0)$ be the additive functional as in \eqref{argminaf}. Define a kernel $\Pi$ on $[0,1]$ by
\begin{equation}
\label{argminlm}
\Pi(x,dy) =     \left\{ \begin{array}{ccl}
         \Pi^{0 \uparrow}(dy) & \mbox{for} & x=0,\\
         \mu^{\uparrow 1}(x) \delta_1 & \mbox{for}
         & x \in (0,1), \\
         0 & \mbox{for} & x=1.
                \end{array}\right.
\end{equation}
where $\delta_1$ is the point mass at $1$,
\begin{equation}
\label{jrt0}
\Pi^{0 \uparrow}(dy): = \frac{dy}{\sqrt{2 \pi y^3 (1-y)}}  \quad \mbox{for}~0<y<1,
\end{equation}
and 
\begin{equation}
\label{jrt1}
\mu^{\uparrow 1}(x): = \frac{1}{2(1-x)} \quad \mbox{for}~0<x<1.
\end{equation}
Then $(C,\Pi)$ is a L\'evy system of $(\alpha_t; t \geq 0)$.
\end{theorem}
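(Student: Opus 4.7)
The plan is to separately analyze the two jump types established by Proposition \ref{argminBMsp}---type (ii) jumps from $x \in (0,1)$ up to $1$, and type (i) jumps from $0$ up to $y \in (0,1)$---and then assemble the pieces into the L\'evy system identity \eqref{Levysystemdef} via a monotone class argument. The Markov property (Proposition \ref{Markov}) and the Hunt structure of $\alpha$ (Proposition \ref{Fellerconcept}) reduce the rate computations to the initial laws ${\bf P}^x$ from \eqref{defPx}.

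For the type (ii) rate, I would work under ${\bf P}^x$ and apply Denisov's decomposition (Theorem \ref{DenisovBM}) to see that $(B_{1+s} - B_1;\, s \geq 0)$ is standard Brownian motion independent of $M := B_1 - B_x$, where $M$ is Rayleigh with scale $\sqrt{1-x}$ by \eqref{meanderRay}. A jump of $\alpha$ up to $1$ at time $t \in (0, x)$ corresponds to the first hitting of $-M$ by this post-$1$ Brownian motion. Integrating the reflection-principle density $\frac{m}{\sqrt{2\pi s^3}}e^{-m^2/(2s)}$ against the Rayleigh density for $M$ produces
\begin{equation*}
{\bf P}^x\bigl(\alpha \text{ jumps to } 1 \text{ within } [0,t]\bigr) \;=\; \int_0^t \frac{\sqrt{1-x}}{2(s+1-x)^{3/2}}\,ds,
\end{equation*}
whose derivative at $t = 0^+$ is the instantaneous rate $\mu^{\uparrow 1}(x) = 1/(2(1-x))$ per unit time, matching \eqref{jrt1}. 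This is the content I expect Lemma \ref{jr1} to package.

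For the type (i) kernel, the direct approach is delicate because $\{t: \alpha_t = 0\}$ has Lebesgue measure zero, so I would exploit the time-reversal identity of Proposition \ref{timereversal} instead. Applying the candidate L\'evy system identity to test functions $f(x,y) = g(y) 1_{\{x=0\}}$ and pushing through time reversal turns jumps from $0$ to $y$ into jumps from $1-y$ to $1$; combining this with the arcsine invariant distribution \eqref{arcsinesta} and the rate $\mu^{\uparrow 1}$ just established gives
\begin{equation*}
c \int_0^1 g(y)\, \Pi^{0 \uparrow}(dy) \;=\; \int_0^1 \frac{g(1-x)}{2(1-x)} \cdot \frac{dx}{\pi \sqrt{x(1-x)}} \;=\; \int_0^1 \frac{g(y)\,dy}{2 \pi\, y^{3/2} (1-y)^{1/2}},
\end{equation*}
where $c := \mathbb{E} l_1^0$. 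Lemma \ref{lct} together with the $\tfrac{1}{2}$-stable age computation based on \eqref{halfss} pins down $c = 1/\sqrt{2\pi}$ (as will be recorded in Corollary \ref{lctrate}), yielding $\Pi^{0 \uparrow}(dy) = dy/\sqrt{2\pi y^3(1-y)}$ in agreement with \eqref{jrt0}; this is what I expect Lemma \ref{jr0} to package.

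With $\mu^{\uparrow 1}$ and $\Pi^{0 \uparrow}$ identified, the additive functional $C_t = t + l_t^0$ is forced by the dynamics (unit downward drift while $\alpha \in (0,1]$ together with local-time accumulation at $0$, and with no time spent at $1$ by Lemma \ref{lct}), and \eqref{Levysystemdef} for general bounded measurable $f$ follows from a monotone class argument once it is verified on the two spanning classes $f(x,y) = g(x) 1_{\{y=1\}}$ and $f(x,y) = g(y) 1_{\{x=0\}}$. The main obstacle is justifying that time reversal swaps local time at $0$ with local time at $1$ at matching normalization; it is essential to pin down the constant $c$ via the explicit age-process representation near $1$ granted by Lemma \ref{lct}, rather than just up to scale.
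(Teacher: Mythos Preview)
Your proposal is correct and follows essentially the same route as the paper: Lemma~\ref{jr1} computes $\mu^{\uparrow 1}(x)$ under ${\bf P}^x$ via Denisov's decomposition, the Rayleigh law of $B_1-B_x$, and the reflection principle (the paper phrases it through the survival probability $s(x,y)=\sqrt{(1-x)/(1-y)}$ rather than the hitting density, but these are the same calculation), and Lemma~\ref{jr0} obtains $\Pi^{0\uparrow}$ by the time-reversal balance you describe. One small correction to your sketch: the constant $c=\mathbb{E}l_1^0=1/\sqrt{2\pi}$ is not extracted from Lemma~\ref{lct} and \eqref{halfss} alone---the paper's proof of Corollary~\ref{lctrate} combines Lemma~\ref{lct} with the renewal identity $\mathbb{E}\Delta=\pi$ from \eqref{Gapmean}, so the forward reference you flag leans on the Palm/renewal computation rather than a purely local age-process argument.
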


Recall from Proposition \ref{argminBMsp} \eqref{jumptypb} that $(\alpha_t; t \geq 0)$ can only have $(i).$ jumps from $0$ to some $x \in (0,1)$, and $(ii).$ jumps from some $x \in (0,1)$ to $1$. We start by computing the jump rate of $\alpha$ from $x \in (0,1)$ to 1.

\begin{lemma}
\label{jr1}
Let $(\alpha_t; t \geq 0)$ be the argmin process of Brownian motion. 
\begin{enumerate}
\item
Let $x > y \geq 0$. The probability that $\alpha^x$ decreases at unit speed from $x$ to $y$ with no jumps is given by
\begin{equation}
\label{sxy}
s(x,y)  = \sqrt{\frac{1-x}{1-y}}.
\end{equation}
\item
For $x \in (0,1)$, the jump rate of $\alpha$ per unit time from $x$ to $1$ is $\mu^{\uparrow 1}(x)$ defined by \eqref{jrt1}.
\end{enumerate}
\end{lemma}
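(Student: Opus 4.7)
\textbf{Proof proposal for Lemma \ref{jr1}.}
For part (1), my plan is to translate the no-jump event into a condition on the underlying Brownian path. Under ${\bf P}^x$, the minimum of $(B_u; 0 \leq u \leq 1)$ is attained at time $x$, and by Denisov's decomposition (Theorem \ref{DenisovBM}) one has $B_u > B_x$ almost surely on $(x,1]$. Hence $\alpha^x_t = x - t$ for every $t \in [0, x-y]$ holds if and only if, for each such $t$, $B_x$ remains the unique last-attained minimum on the window $[t, t+1]$; this collapses to the single requirement that $B_u > B_x$ for all $u \in (1, 1 + (x-y)]$. Writing $M := B_1 - B_x$ and $W_v := B_{1+v} - B_1$, this is the event $\{\min_{v \in [0, x-y]} W_v > -M\}$. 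By Denisov's decomposition and \eqref{meanderRay}, $M$ is Rayleigh distributed with scale $\sqrt{1-x}$, and $W$ is an independent standard Brownian motion.

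Next I compute $s(x,y) = \mathbb{P}(\tau_M > x - y)$, where $\tau_m := \inf\{v \geq 0 : m + W_v = 0\}$. By the reflection principle, $\mathbb{P}(\tau_m > z) = \erf(m/\sqrt{2z})$. Integrating against the Rayleigh density of $M$ (or, equivalently, mixing the standard first-passage density $\frac{m}{\sqrt{2\pi t^3}} e^{-m^2/(2t)}$ against the Rayleigh density to obtain the marginal $f_{\tau_M}(t) = \sqrt{1-x}\big/\bigl[2(1-x+t)^{3/2}\bigr]$ and integrating from $x-y$ to $\infty$) gives
\[
s(x,y) \;=\; \sqrt{\frac{1-x}{(1-x) + (x-y)}} \;=\; \sqrt{\frac{1-x}{1-y}},
\]
which is \eqref{sxy}. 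This Gaussian integral is the only real computation in the whole lemma.

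For part (2), I combine the Markov property of $\alpha$ (Proposition \ref{Markov}) with the path description from Proposition \ref{argminBMsp}: between jumps the argmin descends deterministically at unit speed, and from $x \in (0,1)$ the only possible jump target is $1$. So $s(\cdot, \cdot)$ is multiplicative, $s(x,z) = s(x,y) s(y,z)$ for $z < y < x$, and a deterministic time change recasts the first jump time as an inhomogeneous exponential clock with instantaneous rate $\mu^{\uparrow 1}(\alpha_t)$ against the Lebesgue time the process spends at each level. This yields
\[
s(x,y) \;=\; \exp\!\Bigl(-\!\int_y^x \mu^{\uparrow 1}(u)\, du\Bigr),
\]
so that $\mu^{\uparrow 1}(y) = \partial_y \log s(x,y) = 1/[2(1-y)]$, which is \eqref{jrt1}.

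The main obstacle is simply evaluating the Rayleigh/first-passage mixture in step one; once $s(x,y) = \sqrt{(1-x)/(1-y)}$ is in hand, the identification of the jump rate in part (2) is automatic from the sample-path structure already established.
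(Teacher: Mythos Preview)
Your proof is correct and follows essentially the same approach as the paper's. For part~(1), both you and the paper translate the no-jump event to $\inf_{v\le x-y}(B_{1+v}-B_1)>-(B_1-B_x)$, with $B_1-B_x$ Rayleigh of scale $\sqrt{1-x}$ independent of the post-$1$ Brownian motion; the paper evaluates this as $\mathbb{P}(|Z_{x-y}|<R^{1-x})$ via the reflection principle, while you compute the mixed first-passage density $f_{\tau_M}(t)=\tfrac{\sqrt{1-x}}{2(1-x+t)^{3/2}}$ and integrate---the same Gaussian integral in slightly different clothing. For part~(2), both arguments differentiate $\log s(x,y)$ to extract the rate $\mu^{\uparrow 1}$.
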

\begin{proof}
$(1)$ Let $\widetilde{x}:=1-x$ and $\widetilde{y}:=1-y$. By Denisov's decomposition (Theorem \ref{DenisovBM}), under ${\bf P}^x$, $(B_{x+t} - B_x; 0 \leq t \leq \widetilde{x})$ is a Brownian meander of length $\widetilde{x}$, independent of Brownian motion $(B_{1+t} - B_1; t \geq 0)$. 

Let $Z_{x-y}$ be normally distributed with mean $0$ and variance $x-y$. Let $R^{\widetilde{x}}$ be Rayleigh distributed with parameter $\widetilde{x}$, independent of $Z_{x-y}$. We have
\begin{align*}
    s(x,y) & = {\bf P}^x\left(\inf_{t \leq x-y} (B_{1+t} - B_1) > B_x - B_1 \right) \\
           & =\mathbb{P}(|Z_{x-y}| < R^{\widetilde{x}})\\
           & = 2 \int_0^{\infty} \mathbb{P}(R^{\widetilde{x}} > z) \cdot \frac{1}{\sqrt{2 \pi (x-y)}} \exp\left(-\frac{z^2}{2(x-y)} \right) dz = \sqrt{\frac{\widetilde{x}}{\widetilde{y}}},
\end{align*}
where the second equality follows from the reflection principle of Brownian motion, and the fact that a Brownian meander of length $\widetilde{x}$ evaluated at time $\widetilde{x}$ is Rayleigh distributed with parameter $\widetilde{x}$, whose density is given by \eqref{meanderRay}.

$(2)$ Note that   
\begin{equation*}
    s(x,y) : = \exp \left(-\int_y^x dz \mu^{\uparrow 1}(z) \right).
\end{equation*}
We obtain the jump rate \eqref{jrt1} by taking derivative of \eqref{sxy} with respect to $x$.
 \end{proof}

\begin{remark}
\label{jremark}
{\em We provide an alternative approach to Lemma \ref{jr1}. Consider the excursions above the past minimum of $(B_t; t \geq 0)$. Given $\alpha_0=x$, it must be a ladder time; that is the starting time of an excursion. Thus, the probability that $\alpha$ jumps to $1$ on $(0,dt]$ given $\alpha_0=x$ is the same as that of an excursion terminates in $dt$ given that it has reached length $\widetilde{x}$.

Let $\zeta$ be the length of such an excursion. By \eqref{Itolength}, the aforementioned probability is given by 
\begin{equation*}
{\bf n}(\zeta \in \widetilde{x}+dt | \zeta>\widetilde{x}) = \frac{\Lambda(d \widetilde{x})/d\widetilde{x}}{\Lambda(\widetilde{x}, \infty)}dt =  \frac{1}{2 \widetilde{x}} dt,
\end{equation*}
where $\Lambda(dx)$ is the L\'evy measure of a $\frac{1}{2}$-stable subordinator as in \eqref{halfss}.
This gives the jump rate \eqref{jrt1}.
}
\end{remark}

To conclude this subsection, we compute the L\'{e}vy measure of jumps of $\alpha$ in from $0$.
\begin{lemma}
\label{jr0}
Let $(\alpha_t; t \geq 0)$ be the argmin process of Brownian motion. For $y \in (0,1)$, the L\'{e}vy measure of jumps of $\alpha$ per unit local time in from $0$ is $\Pi^{0 \uparrow}(dy)$ defined by \eqref{jrt0}.
\end{lemma}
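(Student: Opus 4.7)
The plan is to extract $\Pi^{0\uparrow}$ from the description of jumps to $1$ given in Lemma \ref{jr1}, by exploiting the time-reversal symmetry of Proposition \ref{timereversal}. Fix $T>0$ and set $\tilde\alpha_t:=1-\alpha_{(T-t)-}$ on $[0,T]$. A direct inspection of left and right limits shows that $\alpha$ has a jump at time $s$ from $0$ to $y\in(0,1)$ if and only if $\tilde\alpha$ has a jump at time $T-s$ from $1-y$ to $1$. Since $\tilde\alpha\stackrel{(d)}{=}\alpha$ on $[0,T]$, the expected counting measures of these two jump types coincide.

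For any Borel $A\subset(\delta,1-\delta)$ with $\delta>0$, which keeps the jump counts finite, I would invoke the L\'evy-system identity for jumps to $1$ (Lemma \ref{jr1}) together with the stationary arcsine density $f(x)=1/(\pi\sqrt{x(1-x)})$ to write
\begin{equation*}
\mathbb{E}\#\{s\in[0,T]:\tilde\alpha_{s-}\in 1-A,\ \tilde\alpha_s=1\}=T\int_{1-A}\mu^{\uparrow 1}(x)\,f(x)\,dx=T\int_A\frac{dy}{2\pi\,y^{3/2}\sqrt{1-y}},
\end{equation*}
after the substitution $y=1-x$ using $\mu^{\uparrow 1}(1-y)=1/(2y)$. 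Transferring this along the time-reversal bijection gives the same value for $\mathbb{E}\#\{s\in[0,T]:\alpha_{s-}=0,\ \alpha_s\in A\}$. On the other hand, the L\'evy-system identity applied at state $0$ reads
\begin{equation*}
\mathbb{E}\#\{s\in[0,T]:\alpha_{s-}=0,\ \alpha_s\in A\}=\mathbb{E}[l^0_T]\cdot\Pi^{0\uparrow}(A),
\end{equation*}
since the Lebesgue part of $dC_s=ds+dl^0_s$ contributes nothing at state $0$, the set $\{s:\alpha_s=0\}$ being Lebesgue-null in time. By stationarity, $\mathbb{E}[l^0_T]=cT$ for some $c>0$, and the normalization of $l^0$ inherited from the $\frac{1}{2}$-stable subordinator (the time-reversed analogue of Lemma \ref{lct}) pins $c=1/\sqrt{2\pi}$. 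Equating the two displays and extending by monotone convergence to all Borel $A\subset(0,1)$ yields $\Pi^{0\uparrow}(dy)=dy/\sqrt{2\pi y^3(1-y)}$, matching \eqref{jrt0}.

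The main delicacy is the interplay between time-reversal and the local-time normalization: one must either independently identify $c=1/\sqrt{2\pi}$ from the $\frac{1}{2}$-stable subordinator correspondence at $0$ (mirroring Lemma \ref{lct}), or absorb this ambiguity at this step and defer fixing $c$ to Corollary \ref{lctrate}. The restriction to sets $A$ bounded away from $\{0,1\}$ is essential as well, since jumps of $\alpha$ accumulate near both boundary states at infinite rate; the extension to the full interval then follows from monotone convergence, both on the Lebesgue-rate side and on the L\'evy-measure side.
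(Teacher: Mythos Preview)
Your proposal is correct and follows essentially the same approach as the paper: use the time-reversal identity (Proposition \ref{timereversal}) to equate the mean number of jumps from $0$ into $dy$ with the mean number of jumps from $1-y$ to $1$, compute the latter via the arcsine stationary density and the rate $\mu^{\uparrow 1}$ from Lemma \ref{jr1}, and read off $\Pi^{0\uparrow}$ after dividing by $\mathbb{E}l^0_1$. You are more explicit than the paper about the potential circularity in the value $c=1/\sqrt{2\pi}$, and your two suggested resolutions (fixing $c$ directly from the $\tfrac12$-stable normalization, or deferring it to Corollary \ref{lctrate}) are both in the spirit of how the paper handles it.
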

\begin{proof}
On one hand, the mean number of jumps per unit time from $0$ to $dy$ near $y$ is $\Pi^{0 \uparrow} (dy) \mathbb{E}l^0_1 = \Pi^{0 \uparrow} (dy) / \sqrt{2 \pi}$. On the other hand, the mean number of jumps per unit time from $dy$ near $1-y$ to $1$ is given by
$$\frac{dy}{\pi \sqrt{y (1-y)}} \mu^{\uparrow 1}(1-y) = \frac{dy}{2 \pi \sqrt{y^3 (1-y)}}.$$
By Proposition \ref{timereversal}, we identify these two quantities and obtain the L\'{e}vy measure \eqref{jrt0}.
\end{proof}
%---------------------------
\subsection{Transition kernel}
\label{s35}
We complete the proof of Theorem \ref{Ftrans}. Recall the definition of $(\alpha_t^x; t \geq 0)$ from \eqref{argmincd}, which is viewed as the argmin process $\alpha$ conditioned on $\alpha_0 = x$. For $0 \leq b \leq 1$, let 
$$\tau_b^x : = \inf\{t>0; \alpha_t^x = b\} $$
to be the first time at which $(\alpha_t^x; t \geq 0)$ hits level $b$. Also recall the definition of $\mu^{\uparrow 1}(x)$ from \eqref{jrt1}. We start with a lemma whose proof is straightforward.
\begin{lemma}
\label{interh}
For $0 < x <1$,
\begin{equation}
\label{ttt}
\mathbb{P}(\tau_1^x \in dt) = \mu^{\uparrow 1}(x-t) s(x,x-t) dt \quad \mbox{if}~0 < t <x,
\end{equation}
where $\mu^{\uparrow 1}(x)$ is given by \eqref{jrt1} and $s(x,y)$ is given by \eqref{sxy}.
\end{lemma}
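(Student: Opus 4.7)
The plan is a short differentiation argument building on Lemma \ref{jr1}. First I would recall from Proposition \ref{argminBMsp}(\ref{jumptypb}) that the only way $\alpha^x$ can reach $1$ while starting from $x\in(0,1)$ is by a jump from some level in $(0,1)$ to $1$; the process cannot pass through $0$ before hitting $1$, since reaching $0$ requires either a jump (only allowed from $0$ itself, not to $0$) or a continuous decrease of duration $x$, after which $\alpha^x$ is at $0$ and the next move is a jump into $(0,1)$, not to $1$. Consequently, for any $t\in(0,x)$, the event $\{\tau_1^x>t\}$ is exactly the event that $\alpha^x$ decreases at unit speed from $x$ to $x-t$ with no jumps.

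By Lemma \ref{jr1}(1) this event has probability $s(x,x-t)=\sqrt{(1-x)/(1-x+t)}$. Hence, for $0<t<x$,
\begin{equation*}
\mathbb{P}(\tau_1^x\le t) = 1 - s(x,x-t),\qquad
\mathbb{P}(\tau_1^x\in dt) = -\frac{d}{dt}s(x,x-t)\,dt.
\end{equation*}
Using the identity $s(x,y)=\exp\!\bigl(-\int_y^x \mu^{\uparrow 1}(z)\,dz\bigr)$ noted in the proof of Lemma \ref{jr1}(2), one gets
\begin{equation*}
-\frac{d}{dt}s(x,x-t) = \mu^{\uparrow 1}(x-t)\,s(x,x-t),
\end{equation*}
which is precisely \eqref{ttt}. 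The only subtle point is the pathwise description justifying the equality $\{\tau_1^x>t\}=\{\text{no jump on }[0,t]\}$ for $t<x$; once this is in place the formula follows by inspection. No additional estimate is required, since the jump rate $\mu^{\uparrow 1}$ and the survival function $s(x,\cdot)$ are tied together by construction.
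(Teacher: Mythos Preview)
Your proposal is correct and is precisely the ``straightforward'' argument the paper leaves to the reader; the paper provides no proof of Lemma \ref{interh} beyond that remark, and your differentiation of the survival probability $s(x,x-t)$ from Lemma \ref{jr1} is the intended route. One small phrasing issue: the claim ``the process cannot pass through $0$ before hitting $1$'' is stronger than needed and not true in general (if no jump occurs on $[0,x]$ the process does reach $0$ at time $x$ and only later hits $1$), but for $t<x$ reaching $0$ is impossible simply because the continuous descent requires time $x>t$ and there are no jumps into $0$; this weaker observation is all that is required to identify $\{\tau_1^x>t\}$ with $\{\text{no jump on }[0,t]\}$.
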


\begin{proof}[Proof of Theorem \ref{Ftrans}]
The first part of Theorem \ref{Ftrans} has been proved as Proposition \ref{Fellerconcept}. Now we compute the transition kernel $Q_t(x,dy)$ for $t >0$ and $x \in [0,1]$ of $(\alpha_t; t \geq 0)$. 

Observe that $\alpha_{t+s}$ and $\alpha_s$ are independent for all $t \geq 1$. By Proposition \ref{stationary},
\begin{equation}
Q_t(x, dy) = \frac{1_{\{0<y<1\}}}{ \pi \sqrt{y(1-y)}}dy  \quad \mbox{for}~t \geq 1~\mbox{and}~x \in [0,1],
\end{equation}
which is the invariant measure of the argmin process $\alpha$.

Given $\alpha_0=1$, we have $B_u \geq B_1$ for all $u \in [0,1]$. So for $0 < t \leq 1$,
$$ t + \alpha^1_t\stackrel{(d)}{=} \sup \left\{s \in [1,1+t]; B_s = \max_{u \in [1,1+t]} B_u \right\}.$$
Consequently, $\alpha^1_t$ is the arcsine distribution rescaled linearly into $[1-t, 1]$. That is, 
\begin{equation}
\label{Case2}
Q_t(1,dy)=  \frac{1_{\{1-t < y < 1\}}}{\pi \sqrt{(1-y)(y+t-1)}} dy \quad \mbox{for}~0<t \leq 1.
\end{equation}
By conditioning on $\tau^x_1$ with $\tau^x_1 \leq x$, we have for $0 < t \leq x \leq 1$,
\begin{equation}
\label{Case3}
Q_t(x, dy) = s(x,x-t) \delta_{x-t}(dy) + \int_{x-t}^{x} dz \mu^{\uparrow 1}(z) s(x,z) Q_{t+z-x}(1,dy),
\end{equation}
while for $0 <x < t  \leq 1$,
\begin{equation}
\label{Case4}
Q_t(x,dy) = s(x,0) Q_{t-x}(0,dy) + \int_0^x dz \mu^{\uparrow 1}(z) s(x,z) Q_{t+z-x}(1,dy).
\end{equation}
\quad In the case $t \leq x$ there is an atom of probability $s(x,x-t)$ at $x-t$, whereas in the case $t>x$ this atom is replaced by probability $s(x,0)$ redistributed according to $Q_{t-x}(0,dy)$. For $t=1$, we know that $Q_1(x, dy)$ is arcsine distributed, whatever $x$. So this case gives a formula for $Q_u(0,dy)$ for any $0<u<1$ with $u:= 1-x$. That is,
\begin{equation}
\label{Case5}
Q_u(0,dy) = \frac{1}{s(1-u, 0)} \left[ \frac{1_{\{0<y<1\}}dy}{\pi \sqrt{y(1-y)}} - \int_0^{1-u} dz  \mu^{\uparrow 1}(z) s(1-u,z) Q_{u+z}(1,dy)  \right].
\end{equation}
It remains to evaluate the r.h.s. of \eqref{Case3}-\eqref{Case5}. By \eqref{sxy} and \eqref{Case2}, we get
$$s(x,x-t) = \sqrt{\frac{1-x}{1-x+t}},$$
and 
$$
\int_{x-t}^{x} dz \mu^{\uparrow 1}(z) s(x,z) Q_{t+z-x}(1,dy)  =  \frac{\sqrt{(y+t-1)^{+}}}{\pi (y+t-x)\sqrt{1-y}} dy .
$$
By injecting these expressions into \eqref{Case3}, we obtain
\begin{equation}
\label{tx1}
Q_t(x, dy)  = \sqrt{\frac{1-x}{1-x+t}} \delta_{x-t}(dy) + \frac{\sqrt{(y+t-1)^{+}}}{\pi (y+t-x) \sqrt{1-y}} dy \quad \mbox{for}~t \leq x \leq 1.
\end{equation}
Similarly, we get from \eqref{Case4} that for $x<t \leq 1$,
\begin{equation*}
Q_t(x,dy) = \sqrt{1-x} Q_{t-x}(0,dy) + \frac{dy \sqrt{1-x}}{\pi (y+t-x) \sqrt{1-y}} \left[\sqrt{\frac{(y+t-1)^{+}}{1-x}} - \sqrt{(y+t-1-x)^{+}} \right];
\end{equation*}
and from \eqref{Case5} that for $0<u<1$,
\begin{equation*}
Q_u(0,dy) = \frac{\sqrt{u} + \sqrt{y(y+u-1)^{+}}}{\pi (y+u) \sqrt{y(1-y)}} dy.
\end{equation*}
By combining the above expressions, we obtain
\begin{equation}
\label{xt1}
Q_t(x,dy) = \frac{\sqrt{(1-x)(t-x)} + \sqrt{y(y+t-1)^{+}}}{\pi (y+t-x) \sqrt{y(1-y)}} dy \quad \mbox{for}~x<t \leq 1.
\end{equation}
\end{proof}
%---------------------------
\subsection{Breakdown of Dynkin's and Rogers-Pitman criterion}
\label{s32}
In this part, we explain why Dynkin's criterion, and the Rogers-Pitman criterion fail to prove that $(\alpha_t; t \geq 0)$ is Markov. Before proceeding further, we recall these sufficient conditions for a function of a Markov process to be Markov.

Let $(X_t; t \geq 0)$ be a continuous-time Markov process on a measurable state space $(E, \mathcal{E})$, with initial distribution $\lambda$ and transition semigroup $(P_t; t \geq 0)$. Let $(E',\mathcal{E}')$ be a second measurable space, and $\phi: (E,\mathcal{E}) \rightarrow (E',\mathcal{E}')$ be a measurable function.

Dynkin \cite{Dynkin} initiated the study of Markov functions, and gave a condition for $(\phi (X_t); t \geq 0)$ to be Markov for all initial distributions $\lambda$. Later Rogers and Pitman \cite{RogersPitman} made a simple observation: if there exists a Markov kernel $\Lambda: E' \times \mathcal{E} \ni (y, A) \mapsto \Lambda(y,A) \in \mathbb{R}_{+}$ such that for all $t \geq 0$ and $A \in \mathcal{E}$,
\begin{equation}
\label{inversef}
\mathbb{P}(X_t \in A | \phi(X_s), 0 \leq s \leq t) = \Lambda\left(\phi(X_t), A\right)\quad a.s.,
\end{equation}
then $(\phi(X_t); t \geq 0)$ is Markov with transition kernels 
$$Q_t = \Lambda P_t \Phi \quad \mbox{for all}~ t \geq 0,$$
where $\Phi$ is the Markov kernel from $E'$ to $E$ induced by $\phi$: 
$\Phi(x, B) = \delta_{\phi(x)}(B)$ for $x \in E$ and $B \in \mathcal{E}'$. 
The following theorem provides a sufficient condition for \eqref{inversef} to hold.
\begin{theorem}[Rogers-Pitman criterion] \cite{RogersPitman}
\label{RogPit}
Let $\Phi$ be derived from $\phi: E \rightarrow E'$ as above. Assume that there exists a Markov kernel $\Lambda$ from $E'$ to $E$ such that 
\begin{enumerate}
\item[(i).]
$\Lambda \Phi = I$, the identity kernel;
\item[(ii).] \label{intertw}
for each $t \geq 0$, the Markov kernel $Q_t = \Lambda P_t \Phi$ satisfies the intertwining relation $\Lambda P_t = Q_t \Lambda$.
\end{enumerate}
Let $(X_t); t \geq 0$ be Markov with initial distribution $\lambda = \Lambda(y, \cdot)$ for some $y \in E'$ and semigroup $(P_t; t \geq 0)$.
Then \eqref{inversef} holds, and $(\phi(X_t); t \geq 0)$ is Markov with starting state $y$ and transition semigroup $(Q_t; t \geq 0)$.
\end{theorem}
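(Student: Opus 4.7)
I would establish both conclusions simultaneously by induction on the number of time points. The core statement to prove, call it $(\star_n)$, is: for $0 = t_0 < t_1 < \cdots < t_n$, bounded measurable $F \colon (E')^{n+1} \to \mathbb{R}$, and bounded measurable $g \colon E \to \mathbb{R}$,
\[
\mathbb{E}_{\Lambda(y,\cdot)}\bigl[F(\phi(X_{t_0}), \ldots, \phi(X_{t_n}))\, g(X_{t_n})\bigr]
= \int Q_{s_1}(y, dy_1) \cdots Q_{s_n}(y_{n-1}, dy_n)\, F(y, y_1, \ldots, y_n)\, \Lambda g(y_n),
\]
where $s_i := t_i - t_{i-1}$. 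Taking $g \equiv 1$ gives the finite-dimensional distributions of $(\phi(X_t))$ matching those of a Markov process started at $y$ with semigroup $(Q_t)$, while a monotone class extension of the general-$g$ case yields $\mathbb{P}(X_{t_n} \in \cdot \mid \phi(X_{t_0}), \ldots, \phi(X_{t_n})) = \Lambda(\phi(X_{t_n}), \cdot)$, from which \eqref{inversef} follows after passing to a separable version of the natural filtration of $(\phi(X_s))_{s \le t}$.

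The base case $n = 0$ uses condition $(i)$: $\Lambda \Phi = I$ forces $\Lambda(y, \cdot)$ to be concentrated on $\phi^{-1}(\{y\})$, so $\phi(X_0) = y$ almost surely and both sides of $(\star_0)$ collapse to $F(y)\, \Lambda g(y)$. For the inductive step at $t_{n+1}$, I would condition on the natural filtration of $X$ at time $t_n$ and invoke the Markov property of $X$ with semigroup $(P_t)$ to rewrite the left side of $(\star_{n+1})$ as $\mathbb{E}_{\Lambda(y,\cdot)}[H(\phi(X_{t_0}), \ldots, \phi(X_{t_{n-1}}), X_{t_n})]$, where $H$ packages slots $n$ and $n{+}1$ of $F$ together with $g$ through the kernel $P_{s_{n+1}}$. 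A routine monotone class extension of $(\star_n)$ to joint integrands then replaces $X_{t_n}$-integration by $\Lambda(\phi(X_{t_n}), \cdot)$-integration. Condition $(i)$, applied inside each $\Lambda$-integration, forces every inner $\phi(x)$ occurrence to equal the corresponding integration variable; condition $(ii)$ then transports the remaining $P_{s_{n+1}}$ across $\Lambda$ via $\Lambda P_{s_{n+1}} = Q_{s_{n+1}} \Lambda$, producing exactly the additional $Q_{s_{n+1}}$-step demanded by $(\star_{n+1})$.

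The main obstacle is bookkeeping rather than depth: at time $t_n$ the integrand simultaneously carries a function of $\phi(X_{t_n})$ (inherited from the $(n{+}1)$-th slot of $F$) and a function of $X_{t_n}$ itself (produced by conditioning on the natural filtration of $X$ at $t_n$), and both hypotheses of the theorem are needed to reconcile them. Condition $(i)$ is what makes the two copies of the integration variable agree after the $\Lambda$-integration, and condition $(ii)$ is what transports the semigroup $P_t$ through $\Lambda$ to yield the asserted $Q_t$-semigroup for $(\phi(X_t))$. No subtlety beyond the usual regularity (Polish state spaces, existence of regular conditional distributions) should be required to complete the argument and derive \eqref{inversef} for the full past filtration $\sigma(\phi(X_s),\, 0 \le s \le t)$.
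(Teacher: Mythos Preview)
The paper does not supply its own proof of this theorem; it is quoted from \cite{RogersPitman} as background before showing (in Propositions~\ref{notRP} and the preceding one) that neither Dynkin's nor the Rogers--Pitman criterion applies to the argmin process. So there is no paper proof to compare against.

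That said, your outline is correct and is essentially the argument in the original paper \cite{RogersPitman}. The inductive statement $(\star_n)$ is exactly the right strengthening: carrying the extra test function $g$ on $X_{t_n}$ is what makes the induction close, since after conditioning at $t_n$ one has a functional of $X_{t_n}$ (not merely of $\phi(X_{t_n})$), and the inductive hypothesis must be able to absorb that. Your identification of where each hypothesis enters is accurate: $(i)$ forces $\Lambda(y',\cdot)$ to concentrate on $\phi^{-1}(\{y'\})$, so that after $\Lambda$-integration the variable $\phi(x)$ may be replaced by the outer integration variable, and $(ii)$ is precisely what converts $\Lambda P_{s_{n+1}}$ into $Q_{s_{n+1}}\Lambda$ to produce the next step of the $Q$-chain. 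The passage from the finite-dimensional statement to \eqref{inversef} for the full filtration $\sigma(\phi(X_s),\,0\le s\le t)$ is a standard monotone-class argument and needs no additional hypotheses beyond measurability. One cosmetic point: you do not need Polish state spaces or regular conditional distributions here, since \eqref{inversef} asserts equality with an explicit kernel $\Lambda$ rather than existence of some regular version; the argument is purely algebraic in the kernels.
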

Note that if instead of $(ii)$, 
$$P_t \Phi = \Phi Q_t  \quad \mbox{for all}~t \geq 0,$$
for a Markov kernel $Q_t$ on $E'$, then $(\phi(X_t); t \geq 0)$ is Markov for all initial distributions $\lambda$. This recovers Dynkin's criterion \cite{Dynkin}. 

As shown by \eqref{funcm}, the argmin process $\alpha$ is a measurable function of the space-time shift process $(\Theta_t; t \geq 0)$ whose transition kernel is given by 
\begin{equation}
\label{mwsg}
P_t(w,d\widetilde{w}) =   \delta_{\Theta_t w}(d\widetilde{w}),
\end{equation}
where $\Theta_t w : = (w_{t+u} - w_t; t \geq 0)$ is the space-time shift of $w$ on $\mathcal{C}[0,\infty)$. 
For $w \in \mathcal{C}[0,\infty)$ and $t \geq 0$, let
\begin{equation}
\label{alphatwhaha}
\alpha_t(w) : = \sup\left\{s \in [0,1]: w_{t+s} = \inf_{u \in [0,1]} w_{t+u}\right\}.
\end{equation}
The Markov kernel $\Phi$ induced by the argmin function is given by
\begin{equation}
\label{phihaha}
\Phi(w,\cdot) = \delta_{\alpha_0(w)}(\cdot) \quad \mbox{for all}~w \in \mathcal{C}[0,\infty).
\end{equation}
We first show that Dynkin's criterion does not hold.
\begin{proposition}
Let $(P_t; t \geq 0)$ be the semigroup of the space-time shift process $\Theta$ given by \eqref{mwsg}, $(Q_t; t \geq 0)$ be the semigroup of the argmin process $\alpha$ given by \eqref{Qtrans}, and $\Phi$ be the Markov kernel defined by \eqref{phihaha}. Then
\begin{equation*}
    P_t \Phi (w,\cdot) \neq \Phi Q_t (w,\cdot) \quad \mbox{for all}~t>0.
\end{equation*}
\end{proposition}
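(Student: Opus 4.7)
The plan is to evaluate both kernels in closed form from their defining expressions and observe that they have incompatible structure: $P_t \Phi(w,\cdot)$ is always a Dirac mass, whereas $\Phi Q_t(w,\cdot)$ never is.

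First I would unwind $P_t \Phi(w,\cdot)$. Using $P_t(w,\cdot) = \delta_{\Theta_t w}$ from \eqref{mwsg} and $\Phi(w,\cdot) = \delta_{\alpha_0(w)}$ from \eqref{phihaha}, composition gives $P_t \Phi(w,\cdot) = \Phi(\Theta_t w,\cdot) = \delta_{\alpha_0(\Theta_t w)}(\cdot) = \delta_{\alpha_t(w)}(\cdot)$, where the last equality is the identity \eqref{funcm}. Thus this kernel puts full mass on a single deterministic point of $[0,1]$. On the other hand, $\Phi Q_t(w,\cdot) = Q_t(\alpha_0(w),\cdot)$, the transition law of the argmin process started at $\alpha_0(w)$.

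The one step requiring verification is that $Q_t(x,\cdot)$, as given by \eqref{Qtrans}, is never a Dirac mass for any $x \in [0,1]$ and any $t>0$. I would check the three regimes in turn: (i) for $t>1$, $Q_t(x,\cdot)$ is the diffuse arcsine law on $(0,1)$ of \eqref{arcsinesta}; (ii) for $0 \leq x < t \leq 1$, it is absolutely continuous on $(0,1)$ with no atoms; (iii) for $0 < t \leq x \leq 1$ there is an atom at $x-t$ of mass $\sqrt{(1-x)/(1-x+t)}$, but this mass is strictly less than $1$ whenever $t>0$, so a nontrivial absolutely continuous component on $(1-t,1)$ remains. In none of the cases is the measure concentrated at a single point, so $Q_t(\alpha_0(w),\cdot)$ cannot equal the Dirac measure $\delta_{\alpha_t(w)}(\cdot)$. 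Consequently $P_t \Phi(w,\cdot) \neq \Phi Q_t(w,\cdot)$ for every $w$ and every $t>0$.

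No real obstacle arises; the content is conceptual. The future path $(w_{t+u}-w_t;\,u \geq 0)$ carries strictly more information than the number $\alpha_0(w)$, and this extra randomness produces genuine dispersion in $Q_t(\alpha_0(w),\cdot)$ that the pointwise shift $P_t$, acting on a single path $w$, cannot reproduce.
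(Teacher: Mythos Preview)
Your proposal is correct and follows the same route as the paper: both you and the paper compute $P_t\Phi(w,\cdot)=\delta_{\alpha_t(w)}$ and $\Phi Q_t(w,\cdot)=Q_t(\alpha_0(w),\cdot)$ and conclude from the mismatch. The paper's proof stops at ``From this follows the result,'' whereas you explicitly verify across the three regimes of \eqref{Qtrans} that $Q_t(x,\cdot)$ is never a Dirac mass for $t>0$; this is exactly the detail the paper leaves implicit.
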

\begin{proof}
Observe that for all $t \geq 0$,
\begin{equation*}
    P_t \Phi (w,\cdot) = \delta_{\alpha_t(w)}(\cdot) \quad \mbox{and} \quad \Phi Q_t (w,\cdot) = Q_t(\alpha_0(w),\cdot),
\end{equation*}
where $\alpha_t(w)$ is given by \eqref{alphatwhaha}. From this follows the result.
\end{proof}

Recall the definition of ${\bf P}^x$ from \eqref{defPx}. By Denisov's decomposition (Theorem \ref{DenisovBM}), the condition \eqref{inversef} amounts to
\begin{equation}
\label{kernelmd}
\Lambda(x , \cdot) = {\bf P}^x \quad \mbox{for all}~x \in [0,1].
\end{equation}
The following result shows that Rogers-Pitman intertwining criterion does not hold.
\begin{proposition}
\label{notRP}
Let $(P_t; t \geq 0)$ be the semigroup of the space-time shift process $\Theta$ given by \eqref{mwsg}, $(Q_t; t \geq 0)$ be the semigroup of the argmin process $\alpha$ given by \eqref{Qtrans}, and $\Lambda$ be the Markov kernel defined by \eqref{kernelmd}.
Then for each $t \in (0,1]$,
$$\Lambda P_t(t,\cdot)  \neq Q_t \Lambda(t,\cdot).$$
\end{proposition}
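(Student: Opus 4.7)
My plan is to find a single Borel set $A \subset \mathcal{C}[0,\infty)$ with $\Lambda P_t(t,A) \neq Q_t\Lambda(t,A)$; by the Rogers--Pitman criterion (Theorem \ref{RogPit}), this inequality at the single initial state $t \in (0,1)$ already suffices to rule out hypothesis $(ii)$. I will probe the sign of the shifted path at the boundary time $1-t$ and take
\[
A := \{\widetilde{w} \in \mathcal{C}[0,\infty) : \widetilde{w}_{1-t} < 0\}.
\]

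For the $\Lambda P_t(t,\cdot)$ side, I sample $w \sim {\bf P}^t$ and output $\widetilde{w}_u = w_{t+u} - w_t$. The definition \eqref{defPx} of ${\bf P}^t$ makes the segment $(w_{t+s} - w_t;\,0 \leq s \leq 1-t)$ a forward Brownian meander of length $1-t$, so $\widetilde{w}_{1-t}$ is Rayleigh distributed with scale $\sqrt{1-t}$ by \eqref{meanderRay}, strictly positive a.s., and $\Lambda P_t(t,A) = 0$.

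For the $Q_t\Lambda(t,\cdot)$ side, I plug in the explicit kernel from Theorem \ref{Ftrans} at the state $x=t$,
\[
Q_t(t,dy) = \sqrt{1-t}\,\delta_0(dy) + \frac{\sqrt{(y+t-1)^{+}}}{\pi y \sqrt{1-y}}\,dy,
\]
whose absolutely continuous part carries strictly positive mass on $(1-t,1]$. For any such $y$, the decomposition \eqref{defPx} of ${\bf P}^y$ gives $\widetilde{w}_0 = 0$, $\widetilde{w}_y = -m^y_y$, and $\widetilde{w}_u = \mu_{y-u} - \mu_y$ on $[0,y]$ for a forward meander $(\mu_s)_{0\leq s \leq y}$ of length $y$. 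Evaluating at $u = 1-t < y$ rewrites $A$ as $\{\mu_{y-1+t} < \mu_y\}$, which has strictly positive probability because a Brownian meander is not monotone and the joint law of $(\mu_{y-1+t},\mu_y)$ has a positive density on $\mathbb{R}_+^2$. Integrating against the absolutely continuous part of $Q_t(t,dy)$ yields $Q_t\Lambda(t,A) > 0$, and the claim follows.

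The only step going beyond a direct unpacking of definitions is the positivity $\mathbb{P}(\mu_{y-1+t} < \mu_y) > 0$, which I expect to be immediate from standard support properties of the Brownian meander (for example via its Brownian bridge representation conditional on the endpoint, or the scaling identity $\mu_s = \sqrt{y}\,\rho_{s/y}$ with $\rho$ a normalized meander whose law charges every open subset of the positive continuous-path cone). No other obstacle is in sight; the argument reduces to comparing two explicit path-space laws at a single coordinate.
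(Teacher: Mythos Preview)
Your argument is correct for $t\in(0,1)$ and takes a genuinely different route from the paper. The paper distinguishes the two measures by comparing the mean of the single coordinate $w_1$: under $\Lambda P_t(t,\cdot)=\overset{\longrightarrow}{{\bf M}^{1-t}}\otimes{\bf W}$ one has $\mathbb{E}[w_1]=\sqrt{\pi(1-t)/2}$, while an explicit integration against $Q_t(t,dy)$ shows the mean under $Q_t\Lambda(t,\cdot)$ is strictly larger. Your approach instead probes the sign at the boundary time $1-t$, producing the sharp contrast $\Lambda P_t(t,A)=0$ versus $Q_t\Lambda(t,A)>0$, with the only analytic input being the qualitative fact that a Brownian meander is not a.s.\ below its terminal value at any fixed interior time. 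This avoids the explicit computation of the integral $\int_{1-t}^{1}\frac{\sqrt{y+t-1}}{\pi y\sqrt{1-y}}\,\mathbb{E}^{{\bf P}^y}[w_1]\,dy$ that the paper carries out, at the cost of invoking a support property of the meander rather than a closed-form moment.

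One boundary point you should flag: at $t=1$ your test set degenerates to $A=\{\widetilde w_0<0\}$, which has probability zero under both measures (all paths start at $0$), so the argument as written does not cover $t=1$. You correctly observe that, for the purpose of refuting the Rogers--Pitman intertwining, a single $t\in(0,1)$ already suffices; but since the proposition is stated for every $t\in(0,1]$, you should either handle $t=1$ separately (e.g.\ replace $A$ by $\{\widetilde w_s<0\}$ for some fixed $s\in(0,1)$, which works uniformly) or restrict the claim to $(0,1)$ and note that this is enough for the intended application.
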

\begin{proof}
Observe that $\Lambda P_t(t,\cdot) = \overset{\longrightarrow}{{\bf M}^{1-t}} \otimes {\bf W}$: the law of a Brownian meander of length $1-t$ concatenated by Brownian motion. Thus,
$$\mathbb{E}^{\Lambda P_t(t,\cdot)}[w_1] = \sqrt{\frac{\pi}{2}(1-t)}.$$
Next by \eqref{Qtrans},
$$Q_t \Lambda(t,\cdot) =  \int Q_t(t,dy) {\bf P}^y = \sqrt{1-t} {\bf P}^0 + \int_{1-t}^1\frac{\sqrt{y+t-1}}{\pi y \sqrt{1-y}}{\bf P}^y dy,$$
which implies that
\begin{align*}
\mathbb{E}^{Q_t \Lambda(t,\cdot)}[w_1] &=  \sqrt{\frac{\pi}{2}(1-t)} +\frac{1}{\sqrt{2 \pi}} \int_{1-t}^1 \frac{\sqrt{y+t-1}}{\sqrt{y(1-y)}} dy \\
 & > \mathbb{E}^{\Lambda P_t(t,\cdot)}[w_1] \quad \mbox{for}~t \in (0.1].
 \end{align*}
This yields the desired result.
\end{proof}

%\begin{remark}
%{\em
%Let $(X_t; t \geq 0)$ be the $\mathcal{C}[0,1]$-valued moving-window process of Brownian motion $B$ defined by
%$$X_t: = (B_{t+u} - B_t; 0 \leq u \leq 1) \quad \mbox{for all}~t \geq 0.$$
%So $X$ is the projection of $\Theta$ from $\mathcal{C}[0,\infty)$ to $\mathcal{C}[0,1]$. The fact that $X$ is a Markov process follows from applying the Rogers-Pitman criterion with 
%$$\Lambda(w,d\widetilde{w}) = 1(\widetilde{w} = w \otimes w') {\bf W}(dw'),$$
%where $\otimes$ is the usual path concatenation. Note that the argmin process
%$$\alpha_t = \widehat{\alpha}(X_t) \quad \mbox{for all}~t>0,$$
%where
%$$\widehat{\alpha}(w): = \sup \left\{s \in [0,1]: w_s = \min_{u \in [0,1]} w_u \right\}~\mbox{for}~w \in \mathcal{C}[0,1].$$
%The same argument as in Proposition \ref{notRP} shows that as a function of the $\mathcal{C}[0,1]$-valued process $X$, the argmin process $\alpha$ does not satisfy the Rogers-Pitman criterion.
%}
%\end{remark}
%-----------------------------------------------------------------------------------------------------------------------------------------------------------------------------------------
\section{The $(a,b)$-minima set of Brownian motion}
\label{s4}
In this section, we study the $(a,b)$-minima set $\mathcal{M}_{a,b}$ of Brownian motion defined by \eqref{abminima}. In Section \ref{s42}, we consider the renewal property of the set $\mathcal{M}_{a,b}$, and provide an alternative proof of Theorem \ref{Leust}.
In Section \ref{s43}, we give an explicit construction for times of the set $\mathcal{M}_{1,1}$, which implies Theorem \ref{minimabetlaw}.
Finally in Section \ref{s41}, we deal with the sample path of Brownian motion between two $(1,1)$-minima. There Theorem \ref{ABC} is proved. 
%--------------------------------------------
\subsection{Renewal structure of $(a,b)$-minima}
\label{s42}
We provide an alternative proof of Theorem \ref{Leust}. Recall that the argmin process $\alpha$ is a stationary Markov process, whose
\begin{itemize}
\item 
invariant measure has density $f(x)$, $0 <x <1$ given by \eqref{arcsinesta};
\item
transition kernel $Q_t(x,\cdot)$, $t>0$ and $x \in [0,1]$ is given by \eqref{Qtrans}.
\end{itemize}
For $y \neq x-t$, write $Q_t(x,dy) = q_t(x,y) dy$.
The following lemma, which is crucial in Leuridan's proof of Theorem \ref{Leust}, can be derived from Proposition \ref{stationary} and Theorem \ref{Ftrans}.

\begin{lemma} \cite{NeveuPitman, Leuridan}
\label{Gapmeanlem}
Given a measurable set $A \subset \mathbb{R}^{+}$, let $N_{a,b}(A): = \#(\mathcal{M}_{a,b} \cap A)$ be the counting measure of $(a,b)$-minima. Then 
\begin{equation}
\label{mean}
\mathbb{E}(N_{a,b}(dt)) = \frac{dt}{\pi \sqrt{ab}}
\end{equation}
and for $0 \leq s < t$,
\begin{equation}
\label{correlation}
\mathbb{E}(N_{a,b}(ds)N_{a,b}(dt)) = \frac{1}{\pi \sqrt{ab}}h_{a,b}(t-s)dsdt,
\end{equation}
where $h_{a,b}$ is defined by \eqref{hab}.
\end{lemma}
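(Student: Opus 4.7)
The strategy is to encode $N_{a,b}$ as the counting measure of level hits of the argmin process $\alpha$, then read off both the intensity and the two-point correlation directly from the arcsine law (Proposition \ref{stationary}) and the transition kernel (Theorem \ref{Ftrans}). By Brownian scaling, as in \eqref{Malpha}, I reduce to the case $a+b=1$, in which the defining relation yields the equivalence
\[
t\in\mathcal{M}_{a,b}\ \Longleftrightarrow\ \alpha_{t-a}=a
\]
(the Brownian argmin on $[t-a,t+b]$ being a.s.\ unique), so that $\mathcal{M}_{a,b}-a=\alpha^{-1}(\{a\})$.

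The crucial structural input is Proposition \ref{argminBMsp}: $\alpha$ drifts downward at unit speed between jumps, and the only jumps originate at $0$ or terminate at $1$, so a level $a\in(0,1)$ is never traversed discontinuously. Consequently the number of hits of $a$ in a small window $[r,r+\delta r]$ is, up to $o(\delta r)$, the indicator $\mathbf{1}_{\{\alpha_r\in(a,a+\delta r)\}}$. Taking expectations and inserting the stationary arcsine density yields intensity $\frac{1}{\pi\sqrt{a(1-a)}}=\frac{1}{\pi\sqrt{ab}}$, and the shift $t=r+a$ gives \eqref{mean}. The same unit-speed argument combined with the Markov property of $\alpha$ yields, for $0\le s<t$,
\[
\mathbb{E}(N_{a,b}(ds)\,N_{a,b}(dt)) = f(a)\,q_{t-s}(a,a)\,ds\,dt,
\]
where $f$ is the arcsine density \eqref{arcsinesta} and $q_u(x,\cdot)$ denotes the density part of $Q_u(x,\cdot)$.

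What remains is the algebraic verification that $q_u(a,a)=h_{a,b}(u)$ for every $u>0$, by direct substitution into \eqref{Qtrans} with $x=y=a$ and $1-a=b$: the point mass at $x-u$ contributes nothing for $u>0$, and the three density expressions simplify to $\frac{1}{\pi u}\sqrt{(u-b)^{+}/b}$ on $(0,a]$, to $\frac{1}{\pi u}\bigl(\sqrt{(u-a)/a}+\sqrt{(u-b)^{+}/b}\bigr)$ on $(a,1]$, and to $\frac{1}{\pi\sqrt{ab}}$ on $(1,\infty)$, matching the three cases of \eqref{hab} in turn. Scaling back then handles general $a,b$. The step I expect to require the most care is not the algebra but the justification of the heuristic \emph{``expected hit count equals marginal density times $\delta r$''} as an identity of Radon measures; the cleanest route is to test $N_{a,b}$ against a bounded continuous $\varphi$ and, using that jumps of $\alpha$ cannot cross an interior level, to pair each $(a,b)$-minimum with the $\alpha$-crossing that produced it, then apply Fubini against the one- and two-dimensional marginals of $\alpha$ evaluated at $a$.
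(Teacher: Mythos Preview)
Your proposal is correct and follows essentially the same route as the paper: both identify $\mathcal{M}_{a,b}$ with the level-$a/(a+b)$ set of the argmin process via Brownian scaling, read off the intensity from the arcsine stationary law, and obtain the two-point function from the transition density $q_u(a,a)$, which one then checks equals $h_{a,b}(u)$ by substitution into \eqref{Qtrans}. The paper's version is terser (it does not spell out the unit-speed crossing justification or the case-by-case algebra you outline), but the argument is the same.
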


\begin{proof}
Observe that 
$$N_{a,b}(dt) =1 \Longleftrightarrow \mbox{the minimum of}~B~\mbox{on}~[t-a,t+b]~\mbox{is achieved in}~dt.$$
By Brownian scaling, the latter has the same probability as that of $\displaystyle \left\{(a+b) \alpha_{\frac{t-a}{a+b}} \in [a, a+dt] \right\}$. So
\begin{align*}
    \mathbb{E}(N_{a,b}(dt)) = \frac{1}{a+b} f\left(\frac{a}{a+b}\right) dt  = \frac{dt}{\pi \sqrt{ab}}.
\end{align*}
A similar argument shows that
\begin{align*}
    \mathbb{E}(N_{a,b}(ds)N_{a,b}(dt)) & = \mathbb{E}(N_{a,b}(ds)) \cdot \frac{1}{a+b} q_{\frac{t-s}{a+b}} \left(\frac{a}{a+b}, \frac{a}{a+b} \right) dt \\
                            & = \frac{1}{\pi \sqrt{ab}} h_{a,b}(t-s) dsdt.
\end{align*}
\end{proof}
 
\begin{proof}[Proof of Theorem \ref{Leust}]
Note that $\mathcal{M}_{a,b} - a : = (T^{a,b}_i-a; i \geq 1)$ is a renewal process with stationary delay. 
By Lemma \ref{Gapmeanlem}, $h_{a,b}(\cdot)$ is the {\em renewal function} of the point process $\mathcal{M}_{a,b} - a$. The formula \eqref{Gaplaw} follows from Daley and Vere-Jones \cite[Example 5.4(b)]{DVJ}. 
By renewal theory, the law of $T_{1}^{a,b} - a$ is obtained first by size-biasing the inter-arrival time distribution \eqref{Gaplaw}, and then by stick-breaking uniformly at random, see Thorisson \cite{Th1995}. This gives the formula \eqref{T1law}.
\end{proof}
%---------------------------------------------------------------
\subsection{Construction of $(1,1)$-minima}
\label{s43}
We consider the case $a=b=1$ by studying the law of Brownian fragments between $(1,1)$-minima. Let 
$T_1, T_2, \cdots$ with $0<T_1<T_2<\cdots$ be times of $(1,1)$-minima set of Brownian motion. 
Now we give a path construction for $T_1, T_2, \cdots$, from which the renewal property of $\mathcal{M}_{1,1}$ is clear.
In particular, Theorem \ref{minimabetlaw} is a corollary of this construction.

\vskip 8pt
\fbox{{\bf Construction of $T_1$}}
Let $J$ be the first descending ladder time of $B$, from which starts an excursion above the minimum of length exceeding $1$. The Laplace transform of $J$ is given by \eqref{Laplace}. By Theorem \ref{exmeander}, $(B - \underline{B})[J,J+1]$ is a Brownian meander of length $1$. 

If $J \geq 1$, then $T_1 = J$. If not, we start afresh Brownian motion at the stopping time $J+1$; that is $B^{1}: = (B_{J+1+t} - B_{J+1}; t \geq 0)$. Let $J_1$ be constructed as $J$ for $B^1$. Thus, $J_1 \in \mathcal{LE}$, and $(B^1 - \underline{B}^1)[J_1,J_1+1]$ is a Brownian meander of length $1$. Now we look backward a unit from $J_1$ to see whether $J_1 \in \mathcal{RE}$ or not. If $J_{1} \in \mathcal{RE}$, then $T_1 = J_1$. If not, we start afresh Brownian motion at $J_1+1$ and proceed as before until a $(1,1)$-minima is found.

\vskip 8 pt
\fbox{{\bf Construction of $T_{i+1}$ given $T_i$}} By induction, $(B_{T_i+t} - B_{T_i}; 0 \leq t \leq 1)$ is a Brownian meander of length $1$. Now it suffices to start afresh Brownian motion at $T_{i}+1$, and proceed as in the construction of $T_1$. 

\vskip 8 pt
\fbox{\bf Evaluation of the geometric rate} Recall that $\Delta$ is distributed as $T_{i+1} - T_i$. Let 
\begin{equation}
\label{Ng}
N: = \inf\{i \geq 1: J_i \in \mathcal{RE}\}.
\end{equation}
It is easy to see that $N$ is geometrically distributed on $\{1,2,\cdots\}$ with parameter $\mathbb{P}(J_1 \in \mathcal{RE})$. Note that $J_i$ depends on the event $\{N = i\}$, but is independent of the event $\{N \geq i\}$. In fact, $N$ is a stopping time of a sequence of i.i.d. path fragments, each starting with a meander and continuing with an independent Brownian motion until time $J_i$. By Wald's identity,
$$\mathbb{E}\Delta = \mathbb{E}N \cdot (1 + \mathbb{E}J_1) = \frac{1 + \mathbb{E}J_1}{\mathbb{P}(J_1 \in \mathcal{RE})}.$$
Now by \eqref{Gapmean}, we get
\begin{equation}
\label{qval}
 \mathbb{P}(J_1 \in \mathcal{RE}) = \frac{2}{\pi}.
\end{equation}

In view of the dependence of $J_i$ and the event $\{N=i\}$, the evaluation of the geometric rate in the distribution of $N$ is quite indirect. Here is a more direct approach.

Consider the construction of $J_1$ as $J$ for a copy of Brownian motion preceded by an independent meander of length $1$.
It is straightforward that
\begin{equation}
\label{strai}
\mathbb{P}(J_1 \in \mathcal{RE}~\mbox{and}~J_1 \geq  1) =  P(J_1 \geq  1 ) =  1 -\frac{2}{\pi},
\end{equation}
where the second equality is obtained by integrating \eqref{firstlaw} over $[0,1]$.
The evaluation of $\mathbb{P}(J_1 \in \mathcal{RE}~\mbox{and}~J_1 < 1)$ is more tricky, which relies on the following lemma. 
\begin{lemma}
Let $(L^{br}_t; 0 \leq t \leq 1)$ be the local time process of a Brownian bridge of length $1$ at level $0$. Then
\begin{equation}
\label{striking}
\mathbb{P}(L_t^{br}> x) = e^{-\frac{x^2}{2}} \mathbb{P}\left(|B_1| > x \sqrt{\frac{1-t}{t}} \right)  \quad \mbox{for}~t \in [0,1], x>0.
\end{equation}
\end{lemma}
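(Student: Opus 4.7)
The plan is to combine the standard absolute-continuity relation between the Brownian bridge and Brownian motion on $[0,t]$ with L\'evy's identity $(L_t,|B_t|) \stackrel{(d)}{=} (S_t, S_t - B_t)$, where $B$ is Brownian motion, $L_t$ is its local time at $0$, and $S_t:=\sup_{s \le t} B_s$. For $t \in (0,1)$ this gives the representation
\begin{equation*}
\mathbb{P}(L_t^{br}>x) \;=\; \mathbb{E}\!\left[\frac{p_{1-t}(B_t,0)}{p_1(0,0)}\,\mathbf{1}_{\{L_t>x\}}\right],
\end{equation*}
where $p_s(y,z)=(2\pi s)^{-1/2}\exp(-(y-z)^2/(2s))$. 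The boundary cases $t=0$ and $t=1$ will then follow by inspection (with the classical fact that $L_1^{br}$ is Rayleigh distributed).

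Next I would use L\'evy's identity together with the reflection principle to write down the explicit joint density
\begin{equation*}
\mathbb{P}(L_t\in da,\,|B_t|\in db) \;=\; \frac{2(a+b)}{\sqrt{2\pi t^3}}\, e^{-(a+b)^2/(2t)}\,da\,db \qquad (a,b>0).
\end{equation*}
Plugging this into the expectation above converts the problem into a concrete double integral over $\{a>x,\,b>0\}$. Substituting $u=a+b$ lets me perform the $u$-integration in closed form since $u\,e^{-u^2/(2t)}$ is an exact derivative. The double integral then collapses to
\begin{equation*}
\mathbb{P}(L_t^{br}>x) \;=\; \sqrt{\frac{2}{\pi t(1-t)}}\int_0^\infty \exp\!\left[-\frac{b^2}{2(1-t)}-\frac{(x+b)^2}{2t}\right]db.
\end{equation*}

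Finally I would complete the square in $b$. Using the identity $\tfrac{1}{2(1-t)}+\tfrac{1}{2t}=\tfrac{1}{2t(1-t)}$, the exponent rewrites as $-\bigl(b+(1-t)x\bigr)^2/(2t(1-t)) - x^2/2$. The factor $e^{-x^2/2}$ pulls out cleanly, and the linear change of variable $v=(b+(1-t)x)/\sqrt{t(1-t)}$ turns the remaining tail integral into $\sqrt{2\pi}\,\mathbb{P}\bigl(B_1>x\sqrt{(1-t)/t}\bigr)$. The prefactor $\sqrt{2/(\pi t(1-t))}\cdot\sqrt{t(1-t)}=\sqrt{2/\pi}$ then combines with this tail to produce $\mathbb{P}(|B_1|>x\sqrt{(1-t)/t})$, giving the claimed identity.

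The only delicate step is the last one: one must check that the $t$-dependent prefactors cancel exactly, leaving an $x^2/2$ that is independent of $t$. This cancellation is forced by the reciprocal-sum identity above, which makes the variance of the shifted Gaussian equal $t(1-t)$; the rest of the argument is bookkeeping and has no conceptual obstacle.
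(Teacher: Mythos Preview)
Your argument is correct and complete; the computation of the double integral via the substitution $u=a+b$ and the completion of the square goes through exactly as you describe.

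The paper's proof is shorter because it imports a ready-made intermediate result: it quotes from \cite{Pitcal} the conditional formula
\[
\mathbb{P}(L_t^{br}>x \mid B_t^{br}=y)=\exp\!\left(-\frac{(|y|+x)^2-y^2}{2t}\right),
\]
and then integrates it against the $\mathcal{N}(0,t(1-t))$ density of $B_t^{br}$. Your absolute-continuity step together with the joint law of $(L_t,|B_t|)$ is precisely a derivation of this conditional formula from L\'evy's identity, so the two approaches compute the same double integral, only organised differently: the paper conditions on $B_t^{br}$ and cites the conditional tail, whereas you change measure to Brownian motion and integrate the joint density directly. Your route is more self-contained (no external reference needed), while the paper's route is a two-line proof once the cited formula is granted.
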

\begin{proof}
 It can be read from Pitman \cite[(3)]{Pitcal} that for $t \in [0,1]$, $x>0$ and $y \in \mathbb{R}$,
\begin{equation}
\label{Pitbg}
\mathbb{P}(L_t^{br} >x| B_t^{br} \in dy) = \exp\left(-\frac{1}{2}\left[ \left(\frac{|y|}{t} + \frac{x}{t} \right)^2 - \frac{y^2}{t} \right] \right).
\end{equation}
By integrating \eqref{Pitbg} with respect to the normal density of $B_t^{br}$ with mean $0$ and variance $t(1-t)$, we obtain \eqref{striking}.
\end{proof}
Let $- \xi$ be the level of the minimum of the free Brownian part of the path at time $J_1$ so that
$J_1 = \sigma_\xi$, where $\sigma$ is $\frac{1}{2}$-stable surbordinator with jumps of size larger than $1$ deleted. Recall from Section \ref{s22} that $\xi$ is exponentially distributed with parameter $\sqrt{2 /\pi}$. By letting $T_x: = \inf\{t>0: B_t = x\}$, we obtain for $0<t<1$,
\begin{align}
\label{jointL1}
\mathbb{P}(\xi \in dx,J_1 \in dt) & = \sqrt{2/\pi} dx \mathbb{P}(T_x \in dt) \notag\\
                                        & = \frac{x}{\pi t^{3/2}} e^{-\frac{x^2}{2t}}dtdx.
\end{align}
By time-reversing the Biane-Yor construction \cite{BY} of Brownian meander minus its future minimum process (see also Bertoin and Pitman \cite[Theorem 3.1]{BP}), we get
\begin{align}
\label{BYcal}
\mathbb{P}(J_1 \notin \mathcal{RE}~\mbox{and}~J_1<1) & = \int_{0}^{\infty} \int_{0}^1  \mathbb{P}(L_{1-t}^{br}>x) \mathbb{P}(J_1 \in dt,\xi \in dx) \notag\\
              & = 1 -\frac{2}{\pi},
\end{align}
where the last equality is obtained by plugging in \eqref{striking} and \eqref{jointL1}. Now \eqref{qval} follows readily from \eqref{strai} and \eqref{BYcal}.

\begin{proof}[Proof of Theorem \ref{minimabetlaw}]
For any random variable $X$, let $\Phi_X(\lambda)$ be the Laplace transform of $X$. The identity \eqref{001} is clear from the preceding construction. It implies that 
\begin{align*}
\mathbb{E}T_1 & = \mathbb{E}J+ \mathbb{P}(J<1) \mathbb{E}\Delta \\
              & = 1 + \frac{2}{\pi} \cdot \pi = 3,
\end{align*}
where the second equality follows from \eqref{meanfirst}, \eqref{firstlaw}, and \eqref{Gapmean}. In addition,
\begin{equation}
\label{001Lap}
\Phi_{T_1}(\lambda) = \mathbb{E}(e^{-\lambda J} 1_{\{J \geq 1\}}) + \Phi_{\Delta}(\lambda) \mathbb{E}(e^{-\lambda J} 1_{\{J<1\}}).
\end{equation}
By integrating with respect to \eqref{firstlaw}, we get
\begin{equation}
\label{001Lap2}
\mathbb{E}(e^{-\lambda J} 1_{\{J <1\}}) = \frac{\erf(\sqrt{\lambda})}{\sqrt{\pi \lambda}}.
\end{equation}
By injecting \eqref{001Lap2} into \eqref{001Lap}, we obtain
\begin{equation}
\label{001Lap3}
\Phi_{T_1}(\lambda) = \Phi_{J}(\lambda) -  \frac{\erf(\sqrt{\lambda})}{\sqrt{\pi \lambda}}(1 -\Phi_{\Delta}(\lambda) ).
\end{equation}
Recall from \eqref{stadelay} that $T_1 - 1$ is the stationary delay for a renewal process with inter-arrival time distributed according to $\Delta$. By renewal theory,
\begin{equation}
\mathbb{P}(T_1 - 1 \in dt)/dt = \frac{1}{\pi} \mathbb{P}(\Delta > t),
\end{equation}
which implies that
\begin{equation}
\label{001Lap4}
\Phi_{T_1}(\lambda) = \frac{e^{-\lambda}}{\pi \lambda}(1 - \Phi_{\Delta}(\lambda)).
\end{equation}
Combining \eqref{001Lap3} and \eqref{001Lap4} yields
\begin{equation}
\label{001Lap5}
\Phi_{T_1}(\lambda) =  e^{-\lambda}(\Phi_{J}(\lambda))^2, 
\end{equation}
and
\begin{equation}
\label{001Lap6}
\Phi_{\Delta}(\lambda) = 1 - \pi \lambda (\Phi_{J}(\lambda))^2,
\end{equation}
Now the identity \eqref{002} follows readily from \eqref{001Lap5}. By plugging the formula \eqref{Laplace} for $\Phi_{J}(\lambda)$ into \eqref{001Lap5} and \eqref{001Lap6}, we get \eqref{Laplace2} and \eqref{Laplace3}.

Let $H$ be distributed as $B_{T_{i+1}} - B_{T_i}$, $i \geq 1$. Recall from \eqref{meanderRay} that a Brownian meander evaluated at time $1$ has Rayleigh distribution with parameter $1$.
It is clear from the above construction that
\begin{equation}
\label{interpath}
H \stackrel{(d)}{=} \sum_{i=1}^N (R_i - \xi_i),
\end{equation}
where $(R_i)_{i \geq 1}$ are i.i.d. Rayleigh distributed with parameter $1$, and $(\xi_i)_{i \geq 1}$ are i.i.d. exponentially distributed with rate $\sqrt{2/\pi}$, independent of $(R_i)_{i \geq 1}$. 
By Wald's identities,
\begin{equation*}
\mathbb{E}H = \mathbb{E}N \cdot (\mathbb{E}R_1 - \mathbb{E}\xi_1) = \frac{\pi}{2} \left( \frac{\pi}{2} - \frac{\pi}{2} \right) = 0,
\end{equation*}
and 
\begin{equation*}
\Var H = \mathbb{E}N \cdot (\Var R_1 + \Var \xi_1) = \frac{\pi}{2} \left(\frac{4-\pi}{2} + \frac{\pi}{2} \right) = \pi.
\end{equation*}
where $\mathbb{E}R_1$ and $\Var R_1$ are given by \eqref{RayEV}. Moreover,
\begin{equation}
\label{1path}
B_{T_{1}}  \stackrel{(d)}{=} -\xi + 1_{\{J<1\}} H,
\end{equation}
with $(\xi, J)$ independent of $H$, and the joint distribution of $(\xi, J)$ given by \eqref{jointL1}.
So
\begin{equation*}
\mathbb{E}B_{T_1} = -\mathbb{E}\xi+ \mathbb{P}(J<1) \cdot \mathbb{E}H=  -\sqrt{\frac{\pi}{2}} + \frac{2}{\pi} \cdot 0 = -\sqrt{\frac{\pi}{2}},
\end{equation*}
and 
\begin{equation*}
    \mathbb{E}B_{T_1}^2 = \mathbb{E}\xi^2  - 2 \mathbb{E}(\xi 1_{\{J<1\}}) \cdot \mathbb{E}H + \mathbb{P}(J<1) \cdot \mathbb{E}H^2 = \pi + 2.
\end{equation*}
\end{proof}

\begin{remark}
\em{By Leuridan's formula \eqref{Gaplaw}, the Laplace transform of $\Delta$ is given by
\begin{equation}
\label{alternat}
\Phi_{\Delta}(\lambda) =  - \sum_{n=1}^{\infty} (-\Psi(\lambda))^n \quad \mbox{for}~\lambda>0~\mbox{such that}~\Psi(\lambda)<1,
\end{equation}
where
\begin{equation} 
\Psi (\lambda) = \frac{2 e^{-\lambda}}{\pi} \int_0^1 \frac{e^{-\lambda t} \sqrt{t}}{t+1}dt + \frac{e^{-2 \lambda}}{\pi \lambda}.
\end{equation}
Since $\Psi(\lambda) \rightarrow 0$ as $\lambda \rightarrow \infty$, $\Psi(\lambda) < 1$ for all sufficiently large $\lambda$. For such large $\lambda$, the expression \eqref{alternat} simplifies to 
\begin{equation}
\label{alternat2}
\Phi_{\Delta}(\lambda) = \frac{\Psi(\lambda)}{1 + \Psi(\lambda)}.
\end{equation}
By analytic continuation, the formula \eqref{alternat2} holds for all $\lambda>0$.
The equality between \eqref{alternat2} and \eqref{Laplace3} reduces to the following identity
\begin{equation}
\label{ident}
\int_0^1 \frac{e^{-\lambda t} \sqrt{t}}{t+1} dt = \frac{\pi e^{\lambda}}{2}\left[ (\erf(\sqrt{\lambda}))^2 - 1 \right] + \sqrt{\frac{\pi}{\lambda}} \erf(\sqrt{\lambda}),
\end{equation}
which can be verified analytically. 
}
\end{remark}

To conclude this part, we give another identity in law similar to \eqref{001}.
\begin{proposition}
Let $U$ be uniform on $[0,1]$, independent of $J$ and $\Delta$. Then we have the following identity in law
\begin{equation}
\label{another}
T_1 - 1 \stackrel{(d)}{=} J + 1_{\{J \le U^2\}} \Delta.
\end{equation}
\end{proposition}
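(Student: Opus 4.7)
My plan is to prove \eqref{another} by showing that both sides share the common Laplace transform $(\Phi_J(\lambda))^2$. The left-hand side is immediate from Theorem \ref{minimabetlaw}: by the identity in law \eqref{002}, $T_1 - 1$ is distributed as $J + J'$ for an independent copy $J'$ of $J$, so $\Phi_{T_1 - 1}(\lambda) = (\Phi_J(\lambda))^2$.

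For the right-hand side, I will condition on $(J, U)$ and use that $\Delta$ is independent of $(J, U)$ to write
\begin{equation*}
\Phi_{J + 1_{\{J \le U^2\}} \Delta}(\lambda)
= \mathbb{E}\bigl[e^{-\lambda J}\bigl(1_{\{J > U^2\}} + \Phi_\Delta(\lambda)\, 1_{\{J \le U^2\}}\bigr)\bigr]
= \Phi_J(\lambda) - \bigl(1 - \Phi_\Delta(\lambda)\bigr)\, \mathbb{E}\bigl[e^{-\lambda J} 1_{\{J \le U^2\}}\bigr].
\end{equation*}
The key computation is the expectation on the right. Integrating over $U$ first replaces $1_{\{J \le U^2\}}$ by $(1 - \sqrt{J})^+$, which vanishes on $\{J > 1\}$; then the explicit density $f_J(t) = 1/(\pi\sqrt{t})$ on $(0,1]$ from \eqref{firstlaw} gives
\begin{equation*}
\mathbb{E}\bigl[e^{-\lambda J} 1_{\{J \le U^2\}}\bigr]
= \int_0^1 \frac{e^{-\lambda t}(1 - \sqrt{t})}{\pi\sqrt{t}}\, dt
= \frac{\erf(\sqrt{\lambda})}{\sqrt{\pi\lambda}} - \frac{1 - e^{-\lambda}}{\pi\lambda}
= \frac{1}{\pi\lambda}\Bigl(\frac{1}{\Phi_J(\lambda)} - 1\Bigr),
\end{equation*}
where the last equality is just the rearranged form of \eqref{Laplace}, namely $1/\Phi_J(\lambda) = \sqrt{\pi\lambda}\,\erf(\sqrt{\lambda}) + e^{-\lambda}$.

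Substituting this back, together with the identity $1 - \Phi_\Delta(\lambda) = \pi\lambda(\Phi_J(\lambda))^2$ from \eqref{Laplace3}, the right-hand side collapses to
\begin{equation*}
\Phi_J(\lambda) - \pi\lambda(\Phi_J(\lambda))^2 \cdot \frac{1 - \Phi_J(\lambda)}{\pi\lambda\, \Phi_J(\lambda)}
= \Phi_J(\lambda) - \Phi_J(\lambda)\bigl(1 - \Phi_J(\lambda)\bigr)
= (\Phi_J(\lambda))^2,
\end{equation*}
matching the left-hand side. The main obstacle I anticipate is purely computational: spotting that the explicit integral over $(0,1)$ simplifies to exactly the shape $(1/\Phi_J(\lambda) - 1)/(\pi\lambda)$, which is the precise form needed for the cancellation to go through. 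I do not see a transparent bijective or coupling proof; like the companion identity \eqref{002}, the result seems to be a genuine Laplace-transform algebraic coincidence. A quick moment check supports the identity, since $\mathbb{P}(J \le U^2) = 1/\pi$ yields $\mathbb{E}[J + 1_{\{J \le U^2\}}\Delta] = 1 + (1/\pi)\pi = 2 = \mathbb{E}(T_1 - 1)$.
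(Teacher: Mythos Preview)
Your proof is correct, but it takes a genuinely different route from the paper's. You verify the identity analytically by computing both Laplace transforms and showing they agree at $(\Phi_J(\lambda))^2$, leaning on the already-established results \eqref{002} and \eqref{Laplace3}. The paper instead gives a direct probabilistic argument: working with two-sided Brownian motion so that $\widetilde{T}_1:=T_1-1$ is the stationary delay, it observes that $J$ is always a forward $(1,1)$-left-end, and asks whether it is also a backward right-end. If $J=u\ge 1$ this is automatic since $B_u=\min_{[0,u]}B_s$; if $J=u<1$ the backward meander of length $u$ must extend to length $1$ across time $0$, which by excursion theory happens with probability ${\bf n}(\zeta>1)/{\bf n}(\zeta>u)=\sqrt{u}$. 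On failure the renewal structure gives $\widetilde{T}_1=u+\Delta$. The uniform variable $U$ then enters only as an external randomization device reproducing this conditional law, since $\mathbb{P}(J\le U^2\mid J=u)=(1-\sqrt{u})^+$.

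So your remark that you ``do not see a transparent bijective or coupling proof'' is too pessimistic: unlike \eqref{002}, this identity does have a clean probabilistic explanation, and that is exactly what the paper supplies. Your approach has the virtue of being self-contained once Theorem~\ref{minimabetlaw} is in hand and of making the algebraic link to \eqref{Laplace} and \eqref{Laplace3} explicit; the paper's approach explains \emph{why} the term $1_{\{J\le U^2\}}$ arises and, as the paper notes afterward, can be read in reverse to give an independent derivation of $\Phi_{T_1}$ and $\Phi_\Delta$ rather than consuming them as input.
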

\begin{proof}
Note that $\widetilde{T}_1: = T_1 - 1$ is the stationary delay for a renewal process with inter-arrival time distributed according to $\Delta$. If $J = u > 1$ then $\widetilde{T}_1 = J$, whereas if $J = u < 1$ then
$\widetilde{T}_1 = u$ with probability $\sqrt{u}$, and $\widetilde{T}_1 = u + \Delta$ with probability $1 - \sqrt{u}$.
This is because a meander of length $1$ to the right of time $u$ creates a $(1,1)$-minimum for a two-sided Brownian motion at time $u$ if and only if the meander of length $u$ looking backwards from time $u$ to time $0$ becomes a meander of length $1$ when running further backwards to time $u-1$.
By Brownian excursion theory, the probability that a meander of length $u$ followed by an independent Brownian fragment of length $1-u$ creates a meander of length $1$ is given by
$$\frac{{\bf n}(\zeta>1)}{{\bf n}(\zeta>u)} = \frac{\Lambda(1,\infty)}{\Lambda(u,\infty)} = \sqrt{\frac{2}{\pi}}\Bigg/\sqrt{\frac{2}{\pi u}} = \sqrt{u},$$
where $\Lambda(dx)$ is the L\'evy measure of a $\frac{1}{2}$-stable subordinator defined by \eqref{halfss}. The identity \eqref{another} follows from the above analysis, where $U \sim \mbox{Uniform}[0,1]$ serves as a device to replicate the conditional distribution of $\widetilde{T}_1$ given $J$.
\end{proof}

By conditioning on $J$, the identity \eqref{another} yields a Laplace transform relation, which can be used to provide an alternate derivation of the Laplace transforms of $T_1$ and of $\Delta$.
Though not obviously equivalent, each of the two relations of \eqref{001} and \eqref{another} can be derived from the other after substituting in the explicit formula \eqref{Laplace} for $\Phi_{J}(\lambda)$, and using the simple density of $J$ on $[0,1]$.
However, neither relation seems to offer much insight into their remarkable implication \eqref{002}.
%-----------------------------------
\subsection{A path decomposition between $(1,1)$-minima}
\label{s41}
Let
\begin{equation*}
\mathcal{LE}_b:=\{t \geq 0: B_t < B_s~\mbox{for all}~s \in [t,t+b]\},
\end{equation*}
be the set of left ends of forward meanders of length $b$, and
\begin{equation*}
\mathcal{RE}_a:=\{t \geq a: B_t < B_s~\mbox{for all}~s \in [t-a,t]\},
\end{equation*}
be the set of right ends of backward meanders of length $a$. 
Observe that for $i \geq 1$, $T_i^{a,b} \in \mathcal{LE}_b \cap \mathcal{RE}_a$, and $T_{i+1}^{a,b} = \inf\{t>T_i^{a,b}: t \in \mathcal{LE}_b \cap \mathcal{RE}_a\}$. The following lemma shows that between any $T_i^{a,b}$ and $T_{i+1}^{a,b}$, left ends come before right ends.
\begin{lemma} 
\label{lbefr}
For each $i \geq 1$, let $s \in (T_i^{a,b},T_{i+1}^{a,b}) \cap \mathcal{LE}_b$ and $t \in (T_i^{a,b},T_{i+1}^{a,b}) \cap \mathcal{RE}_a$. Then a.s. $s<t$.
\end{lemma}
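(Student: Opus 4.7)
The plan is to argue by contradiction: suppose that $s \geq t$ for some $s \in (T_i^{a,b}, T_{i+1}^{a,b}) \cap \mathcal{LE}_b$ and $t \in (T_i^{a,b}, T_{i+1}^{a,b}) \cap \mathcal{RE}_a$, and produce an $(a,b)$-minimum strictly inside $(T_i^{a,b}, T_{i+1}^{a,b})$, contradicting the fact that $T_i^{a,b}$ and $T_{i+1}^{a,b}$ are consecutive in $\mathcal{M}_{a,b}$.

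To produce the offending $(a,b)$-minimum, let $r$ be the (a.s. unique) argmin of $B$ on the interval $[t,s]$. I will show that $B_r = \inf_{u \in [r-a, r+b]} B_u$, so that $r \in \mathcal{M}_{a,b}$. Since $r \in [t,s]$ and $s \geq t$, one has the inclusion
\begin{equation*}
[r-a, r+b] \;\subseteq\; [t-a, t] \,\cup\, [t, s] \,\cup\, [s, s+b].
\end{equation*}
On the middle piece, $B_u \geq B_r$ by the definition of $r$. On $[t-a, t]$, the assumption $t \in \mathcal{RE}_a$ gives $B_u \geq B_t$, and $B_t \geq B_r$ again by the minimality of $B_r$ on $[t,s]$. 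Symmetrically, on $[s, s+b]$ the assumption $s \in \mathcal{LE}_b$ together with $B_s \geq B_r$ yields $B_u \geq B_s \geq B_r$. Combining the three estimates gives the claim.

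Since $r \in [t, s] \subset (T_i^{a,b}, T_{i+1}^{a,b})$, this puts an $(a,b)$-minimum strictly between two consecutive elements of $\mathcal{M}_{a,b}$, a contradiction. The ``a.s.'' in the statement is needed only to guarantee that the minimum of $B$ on the compact interval $[t,s]$ is attained at a unique point, so that $r$ automatically satisfies the ``$\sup$'' convention in the definition \eqref{abminima} of $\mathcal{M}_{a,b}$; this is a standard a.s.\ property of Brownian motion. The only real work is the interval bookkeeping, and this hinges on the innocuous observation that $r \in [t,s]$ and $s \geq t$ force the three-piece cover of $[r-a, r+b]$ displayed above.
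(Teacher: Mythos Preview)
Your proof is correct and follows exactly the same strategy as the paper's: assume $s\ge t$, take $r=\argmin_{u\in[t,s]}B_u$, and show that $r$ is an $(a,b)$-minimum in $(T_i^{a,b},T_{i+1}^{a,b})$, contradicting consecutiveness. The only difference is that you spell out explicitly the interval cover $[r-a,r+b]\subseteq[t-a,t]\cup[t,s]\cup[s,s+b]$ which the paper dismisses with ``It is clear that $r\in\mathcal{LE}_b\cap\mathcal{RE}_a$''.
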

\begin{proof}
Suppose by contradiction that there exist $s \in (T_i^{a,b},T_{i+1}^{a,b}) \cap \mathcal{LE}_b$ and $t \in (T_i^{a,b},T_{i+1}^{a,b}) \cap \mathcal{RE}_a$ such that $s \geq t$. Let $r:=\argmin_{u \in [t,s]} B_u$ be the time at which $B$ attains its a.s. unique minimum between $t$ and $s$. It is clear that $r \in \mathcal{LE}_b \cap \mathcal{RE}_a$. Thus, $r \geq T_{i+1}^{a,b}$ by definition of $T_{i+1}^{a,b}$. This is impossible since $r \leq s <T_{i+1}^{a,b}$.
\end{proof}
For $i \geq 1$, let
\begin{equation}
\label{rdef}
D_i^{a,b}: = \inf \{ t>T_i^{a,b}: t \in \mathcal{RE}_a\},
\end{equation}
be the first right end between $T_i^{a,b}$ and $T_{i+1}^{a,b}$, and 
\begin{equation}
\label{ldef}
G_i^{a,b}:=\sup\{t<D_i^{a,b}: t \in \mathcal{LE}_b\}
\end{equation}
be the last left end between $T_i^{a,b}$ and $T_{i+1}^{a,b}$.
Observe that there are neither left ends nor right ends between $G_i^{a,b}$ and $D_i^{a,b}$. By Lemma \ref{lbefr}, the next left end after right ends between $T_{i}^{a,b}$ and $T_{i+1}^{a,b}$ is necessarily a right end; thus is $T_{i+1}^{a,b}$. 
\begin{corollary} 
For each $i \geq 1$, $T_{i+1}^{a,b} = \inf\{t>D_{i}^{a,b}: t \in \mathcal{LE}_b\}$ a.s.
\end{corollary}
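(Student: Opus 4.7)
The plan is to deduce the corollary directly from Lemma \ref{lbefr} together with the definitions of $T_{i+1}^{a,b}$, $D_i^{a,b}$, and $G_i^{a,b}$. The statement splits naturally into an upper and a lower bound for the infimum, and both are short given the preceding lemma.

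First I would check the inequality $D_i^{a,b} < T_{i+1}^{a,b}$ a.s. Since $T_{i+1}^{a,b} \in \mathcal{LE}_b \cap \mathcal{RE}_a$, there is a right end strictly greater than $T_i^{a,b}$ and at most $T_{i+1}^{a,b}$, so by the definition \eqref{rdef} we at least have $D_i^{a,b} \leq T_{i+1}^{a,b}$. The strict inequality follows from Brownian excursion theory at the local minimum $T_{i+1}^{a,b}$: approaching $T_{i+1}^{a,b}$ from the left, one finds infinitely many times $t$ at which the minimum of $B$ over $[t-a,t]$ is attained at $t$, so there exist points of $\mathcal{RE}_a$ strictly between $T_i^{a,b}$ and $T_{i+1}^{a,b}$. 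Combined with $T_{i+1}^{a,b}\in \mathcal{LE}_b$, this immediately yields
\begin{equation*}
\inf\{t>D_i^{a,b}:t\in\mathcal{LE}_b\}\;\leq\;T_{i+1}^{a,b}.
\end{equation*}

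For the reverse inequality I would argue by contradiction: suppose there exists $s\in\mathcal{LE}_b$ with $D_i^{a,b}<s<T_{i+1}^{a,b}$. Then, since $T_i^{a,b}<D_i^{a,b}<s<T_{i+1}^{a,b}$, the point $s$ lies in $(T_i^{a,b},T_{i+1}^{a,b})\cap\mathcal{LE}_b$, while by construction $D_i^{a,b}\in(T_i^{a,b},T_{i+1}^{a,b})\cap\mathcal{RE}_a$. Applying Lemma \ref{lbefr} to this pair $(s,D_i^{a,b})$ forces $s<D_i^{a,b}$ a.s., contradicting $s>D_i^{a,b}$. Hence no such $s$ exists, and
\begin{equation*}
\inf\{t>D_i^{a,b}:t\in\mathcal{LE}_b\}\;\geq\;T_{i+1}^{a,b}.
\end{equation*}

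The only genuinely delicate point, and the main obstacle, is the strict inequality $D_i^{a,b}<T_{i+1}^{a,b}$; this must be argued from the fine structure of Brownian paths near an $(a,b)$-minimum. Once that is in hand the corollary is immediate, being essentially a restatement of the content of Lemma \ref{lbefr} in terms of the first left end past $D_i^{a,b}$.
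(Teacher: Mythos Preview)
Your argument is correct and follows the same route as the paper, which dispatches the corollary in a single sentence by invoking Lemma \ref{lbefr}: once one knows that all left ends in $(T_i^{a,b},T_{i+1}^{a,b})$ precede all right ends, no $s\in\mathcal{LE}_b$ can lie in $(D_i^{a,b},T_{i+1}^{a,b})$, so the first left end past $D_i^{a,b}$ is $T_{i+1}^{a,b}$ itself. Your write-up has the added merit of isolating and justifying the strict inequality $D_i^{a,b}<T_{i+1}^{a,b}$ (via the accumulation of right ends just to the left of the $(a,b)$-minimum), a point the paper leaves implicit; your use of $D_i^{a,b}\in(T_i^{a,b},T_{i+1}^{a,b})\cap\mathcal{RE}_a$ in the contradiction step also quietly uses that $\mathcal{RE}_a$ is closed and does not accumulate at $T_i^{a,b}$ from the right, both of which hold a.s.\ for Brownian paths.
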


From now on, we consider the particular case of $a=b=1$. To simplify notations, write $T_i, \mathcal{LE}, \mathcal{RE}, D_i$ and $G_i$ for $T_i^{1,1}, \mathcal{LE}_1, \mathcal{RE}_1, D_i^{1,1}$ and $G_i^{1,1}$.
The following result characterizes the path fragment $B[G_i,D_i]$.

\begin{proposition}
\label{corodecomp}
Almost surely, for each $i \geq 1$,
\begin{itemize}
\item
$B_{G_i} = B_{D_i}$ and $D_i-G_i > 1$.
\item
$B[G_i, D_i] : = (B_t-B_{G_i}; G_i \leq t \leq D_i)$ consists of two excursions of lengths smaller than $1$.
\end{itemize}
\end{proposition}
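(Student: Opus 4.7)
My proof strategy splits into two main steps corresponding to the two bulleted claims, set $\delta := D_i - G_i$ for brevity.

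\textbf{Step 1: $\delta > 1$.} Suppose for contradiction that $\delta \le 1$. Then $D_i \in (G_i, G_i+1]$, so the forward-meander property $G_i \in \mathcal{LE}$ forces $B_{G_i} < B_{D_i}$. Simultaneously, $G_i \in [D_i-1, D_i)$, so the backward-meander property $D_i \in \mathcal{RE}$ forces $B_{D_i} < B_{G_i}$. These conflict, establishing $\delta > 1$.

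\textbf{Step 2: $B_{G_i} = B_{D_i}$.} By time-reversal symmetry (Proposition \ref{timereversal}) it suffices to rule out $B_{G_i} < B_{D_i}$. Assuming this, define
\[
u^* := \sup\{s \in [G_i, D_i) : B_s \le B_{D_i}\},
\]
which is non-empty since it contains $G_i$. Continuity of $B$ gives $B_{u^*} \le B_{D_i}$, and the backward-meander property $B_s > B_{D_i}$ on $[D_i-1, D_i)$ forces $u^* < D_i - 1$; in particular $u^* + 1 < D_i$. For any $s \in (u^*, u^*+1] \subset (u^*, D_i)$, the definition of $u^*$ combined with continuity gives $B_s > B_{D_i} \ge B_{u^*}$ strictly. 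Hence $u^* \in \mathcal{LE} \cap (G_i, D_i)$, contradicting the absence of left ends in $(G_i, D_i)$.

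\textbf{Step 3: Two-excursion decomposition with lengths $<1$.} I would combine Steps 1 and 2 with the a.s.\ upper bound $\delta < 2$, which follows from the argmin-process dynamics established in Section \ref{s3}: between $G_i$ and $D_i$ the process $\alpha$ undergoes exactly one up-jump from the boundary $0$ (at $G_i$) and one up-jump to the boundary $1$ (at $D_i - 1$), with unit-speed downward drift in between, constraining $\delta$ to the interval $(1,2)$. With $\delta \in (1,2)$, the two meander windows $[G_i, G_i+1]$ and $[D_i-1, D_i]$ cover $[G_i, D_i]$ and overlap in $[D_i-1, G_i+1]$ of length $2-\delta \in (0,1)$. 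The path $B_t - B_{G_i}$ thereby decomposes into two overlapping Brownian meander fragments, and the two ``excursion'' pieces of the statement can be identified with the non-overlap portions $[G_i, D_i-1]$ and $[G_i+1, D_i]$, each of length $\delta - 1 \in (0,1)$.

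The main obstacle is Step 3. A sup argument analogous to Step 2 shows that $B_t - B_{G_i} > 0$ strictly on $(G_i, D_i)$, so formally there is a single positive excursion of length $\delta$, not two. Reconciling the statement of the proposition with this observation requires interpreting ``two excursions'' as the two overlapping length-$1$ Brownian-meander pieces forced by $G_i \in \mathcal{LE}$ and $D_i \in \mathcal{RE}$, or equivalently as their non-overlap portions each of length $\delta - 1 < 1$; verifying the meander laws on these pieces, using the strong Markov property at the stopping times $G_i + 1$ and $D_i - 1$, would complete the argument.
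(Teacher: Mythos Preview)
Your Steps 1 and 2 are essentially sound and in fact a bit cleaner than the paper's route: the paper proves $D_i-G_i>1$ only at the end (using that a reflected bridge of length $1$ does not embed in Brownian motion), whereas your direct clash between the forward- and backward-meander inequalities is immediate. One caveat in Step~2: the appeal to Proposition~\ref{timereversal} is a distributional statement about the process $(\alpha_t)$, not a pathwise symmetry for a fixed $i$, so it does not literally reduce the case $B_{G_i}>B_{D_i}$ to the case you treated. The paper handles both inequalities by symmetric but separate ``midlevel'' arguments (introducing $D'$ and $G'$), and your $u^*$-argument admits an obvious mirror version; you should state that explicitly rather than invoke time reversal.

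Step 3 is where the real gap lies. The phrase ``two excursions'' in the proposition is meant literally: the path $t\mapsto B_t-B_{G_i}$ on $[G_i,D_i]$ is nonnegative and touches zero exactly once in the interior, say at $m\in(G_i,D_i)$, so that $[G_i,m]$ and $[m,D_i]$ are two genuine excursions above level $B_{G_i}$, each of length strictly less than $1$. The paper obtains this in three strokes: (i) if $B_M<B_{G_i}$ at the minimizer $M$ on $[G_i,D_i]$, then $M\in\mathcal{RE}$, contradicting the minimality of $D_i$, so $B\ge B_{G_i}$ on $[G_i,D_i]$; (ii) any sub-excursion of length $>1$ would have its endpoints in $\mathcal{LE}\cap(G_i,D_i)$ or $\mathcal{RE}\cap(G_i,D_i)$, which is forbidden; (iii) two or more interior zeros would produce two distinct local minima of $B$ at the same level, which is a.s.\ impossible. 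Together with $D_i-G_i>1$ this forces exactly one interior zero and hence exactly two excursions, each shorter than $1$.

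Your proposal instead asserts that a sup argument ``analogous to Step~2'' yields $B_t-B_{G_i}>0$ strictly on $(G_i,D_i)$, i.e.\ a single excursion, and then tries to rescue the statement by reinterpreting ``two excursions'' as the overlapping meander windows $[G_i,G_i+1]$ and $[D_i-1,D_i]$, or as the non-overlap segments $[G_i,D_i-1]$ and $[G_i+1,D_i]$. Neither reinterpretation matches the proposition: those segments are not excursions of $B-B_{G_i}$ at all (their endpoints are not zeros of $B-B_{G_i}$), and the proposition is making a statement about the zero set of $B-B_{G_i}$, not about meander windows. Your sup argument, pushed carefully, does show that if $m^*$ is the \emph{last} interior zero then $m^*\notin\mathcal{LE}$ forces $m^*+1\ge D_i$; this is precisely the mechanism by which the second excursion $[m^*,D_i]$ has length $\le 1$, not a proof that $m^*$ cannot exist. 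To complete Step~3 you need the paper's ingredients (i)--(iii), in particular the ``no two local minima at the same level'' fact and the length bound on sub-excursions, rather than an attempt to redefine what the two excursions are.
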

\begin{proof}
Suppose by contradiction that $B_{D_i}<B_{G_i}$. Let $D':=\inf\{t>G_i+1: B_t=(B_{D_i}+B_{G_i})/2\}$, and observe that $D' \in \mathcal{RE}$. By path continuity, $D'<D_i$, which contradicts the definition of $D_i$. Similarly, by considering $G'=\sup\{t<D_i-1: B_t=(B_{D_i}+B_{G_i})/2\}$, we exclude the possibility of $B_{D_i}>B_{G_i}$. 

Now we argue by contradiction that there exists $u \in (G_i,D_i)$ such that $B_u < B_{G_i} = B_{D_i}$. Let $M:=\argmin_{u \in [G_i,D_i]} B_u$ so that $M \in (G_i,D_i)$ and $B_M < B_{G_i} = B_{D_i}$. By definition of $G_i$, we have $M-G_i>1$. This implies that $M \in \mathcal{RE}$, which contradicts the definition of $D_i$. Hence, $B_t \geq B_{G_i} = B_{D_i}$ for all $t \in [G_i,D_i]$, or equivalently $(B_t; G_i \leq t \leq D_i)$ is composed of excursions above the level $B_{G_i} = B_{D_i}$. 

If there exists an excursion interval $[u,v] \subset [G_i,D_i]$ with $v-u >1$, then by path continuity, $[u,v]$ contains at least a left end and a right end. This leads to a contradiction. Further, if there exist $G_i<u<v<D_i$ such that $B_{G_i} = B_u =B_v =B_{D_i}$, then $u$ and $v$ are two local minima at the same level. This is impossible, since a.s. the levels of  local minima in Brownian motion are all different, see Kallenberg \cite[Lemma 13.15]{Kallenberg}. Thus, $(B_u-B_{G_i}; G_i \leq t \leq D_i)$ is composed of at most two excursions of lengths no larger than $1$.

Finally, observe that for all $t \in (G_i,G_i+1]$, $B_t > B_{G_i}$ and thereby $t \notin \mathcal{RE}$. This implies that $D_i - G_i \geq 1$. If $D_i - G_i = 1$, then there exists a reflected bridge of length $1$ in Brownian motion by a space-time shift. But this is excluded by Pitman and Tang \cite[Theorem 4]{PTpattern}.
\end{proof}

According to Proposition \ref{corodecomp}, we get the decomposition \eqref{DecompGDT}
such that 
\begin{itemize}
\item 
$[T_{i}, T_{i+1}] \cap \mathcal{LE} = [T_i, G_i] \cap \mathcal{LE}$, i.e. left ends of forward meanders of unit length are contained in $[T_i, G_i]$;
\item
$[T_{i}, T_{i+1}] \cap \mathcal{RE} = [D_i, T_{i+1}] \cap \mathcal{RE}$, i.e. right ends of backward meanders of unit length are contained in $[D_i, T_{i+1}]$;
\item
$(G_i, D_i) \cap \mathcal{LE} \cap \mathcal{RE} = \emptyset$, i.e. $(G_i, D_i)$ contains neither left ends of forward meanders nor right ends of backward meanders.
\end{itemize}

\begin{proof}[Proof of Theorem \ref{ABC}]
Observe that $T_i$ is the $i^{th}$ time that the argmin process $(\alpha_t; t \geq 0)$ reaches $0$ by a continuous passage from $1$.
It is obvious that $D_i$ is a stopping time relative to $(\mathcal{G}_t)_{t \geq 0}$, the filtrations of the argmin process $\alpha$.  So $T_{i+1} - D_i$ is independent of $(G_i - T_{i}, D_i - G_i)$. Further by time reversal of $\alpha$ (Proposition \ref{timereversal}), we see that $G_i - T_{i}$, $D_i - G_i$ and $T_{i+1} - D_i$ are mutually independent, and $G_i - T_{i} \stackrel{(d)}{=} T_{i+1} - D_i$.

By Lemma \ref{lct}, $(1 - \alpha_{D_i+t}; 0 \leq t \leq T_{i+1} - D_i)$ has the same distribution as the age process of excursions above the past-minimum of Brownian motion until the age reaches $1$. As seen in Section \ref{s22}, $T_{i+1} - D_i \stackrel{(d)}{=} J$.
So $\mathbb{E}(T_{i+1} - D_i) = 1$ by \eqref{meanfirst}.

Recall the definitions of $\mu^{\uparrow 1}(x)$, $\Pi^{0 \uparrow}(dx)$ and $s(x,y)$ from \eqref{jrt1}, \eqref{jrt0} and \eqref{interh}.
From the L\'evy system of the argmin process $(\alpha_t; t \geq 0)$, we have
\begin{align*}
\mathbb{P}(D_i - G_i -1 \in dt) / dt & = c \int_t^1 \Pi^{0 \uparrow}(dx) s(x,x-t) \mu^{\uparrow 1}(x-t) dx \\
                                  & = c \sqrt{\frac{2}{\pi}} \frac{1-t}{(1+t)^2 \sqrt{t}},
\end{align*}
with some constant $c>0$. Further, $\int_0^1 \mathbb{P}(D_i - G_i -1 \in dt) = 1$ leads to $c = \sqrt{\pi /2}$. From this follows \eqref{lawDG}. Thus,
$$\mathbb{E}(D_i -G_i) = \int_1^2 t \cdot \frac{2-t}{t^2 \sqrt{t-1}} dt = \pi-2.$$
Alternatively,
$\mathbb{E}(D_i -G_i) = \mathbb{E}(T_{i+1}-T_i) - \mathbb{E}(G_i -T_i) - \mathbb{E}(T_{i+1} - D_i) = \pi -2$ by \eqref{Gapmean}, and the fact $\mathbb{E}(G_i - T_i) = \mathbb{E}(T_{i+1} -D_i) = 1$. 
\end{proof}
\begin{remark}
\em{ The path decomposition of Theorem \ref{ABC} provides an alternative way to compute the Laplace transform of the inter-arrival time $\Delta$. In fact,
\begin{equation}
\label{pdagree}
\Phi_{\Delta}(\lambda) = e^{-\lambda} (\Phi_{J}(\lambda))^2 \int_0^1 \frac{e^{-\lambda t}(1-t)}{(t+1)^2 \sqrt{t}} dt.
\end{equation}
The equality between \eqref{pdagree} and \eqref{Laplace3} reduces to the following identity
\begin{equation}
\label{pdagree2}
\int_0^1 \frac{e^{-\lambda t}(1-t)}{\sqrt{t}(1+t)^2} dt = \pi \lambda e^{\lambda} \left[(\erf(\sqrt{\lambda})^2 -1)\right]+ 2 \sqrt{\pi \lambda} \erf(\sqrt{\lambda})   + e^{-\lambda},
\end{equation}
which can also be verified analytically.
}
\end{remark}

Recall from Section \ref{s34} that $(l^1_t; t \geq 0)$ is the local times of $\alpha$ at level $1$, and $(l^0_t; t \geq 0)$ is the local times of $\alpha$ at level $0$. As a consequence of Theorem \ref{ABC}, we have
\begin{corollary}
\label{lctrate}
\begin{equation}
\label{lctrateeq}
\mathbb{E}l^1_t/t = \mathbb{E} l^0_t/t = \frac{1}{\sqrt{2 \pi}} \quad \mbox{for all}~ t>0.
\end{equation}
\end{corollary}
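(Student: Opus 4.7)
The plan is to compute the common value $c = \mathbb{E}l^0_1 = \mathbb{E}l^1_1$ by combining the renewal structure of $\mathcal{M}_{1,1}$ with the L\'evy system of $\alpha$. Stationarity of $\alpha$ together with the definition of $l^0$ via the time-reversal of Proposition \ref{timereversal} already forces $\mathbb{E}l^0_t = \mathbb{E}l^1_t = ct$ for every $t > 0$, so it suffices to show $c = 1/\sqrt{2\pi}$.

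The crucial observation is that each $G_i$ is in bijection with a specific L\'evy-system triple event: a jump of $\alpha$ from $0$ to some $x \in (0, 1)$, followed by a continuous unit-speed descent of duration $t := D_i - G_i - 1 \in (0, 1)$, followed by a jump from $x - t$ to $1$. Indeed, once $\alpha$ reaches $1$ it can leave only via continuous decrease (possibly interrupted by further jumps back up to $1$), so by Proposition \ref{argminBMsp}\eqref{jumptypb} it cannot revisit $0$ before the final unit-length descent defining $T_{i+1}$. Hence among all jumps from $0$ in $(T_i, T_{i+1})$, exactly one -- namely the jump at $G_i$ -- is directly followed by a jump to $1$; every other jump from $0$ returns to $0$ via continuous descent before any visit to $1$.

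Applying the L\'evy system identity \eqref{Levysystemdef} with the additive functional $C_s = s + l^0_s$ and the kernel $\Pi$ of Theorem \ref{argminls}, combined with the strong Markov property and the first-jump-to-$1$ density $s(x, x - t)\,\mu^{\uparrow 1}(x - t)$ from Lemmas \ref{jr1} and \ref{interh}, the mean rate per unit real time of these triple events, with $(\alpha_{G_i}, D_i - G_i - 1) \in dx \times dt$, equals $c\, \Pi^{0\uparrow}(dx)\, s(x, x - t)\, \mu^{\uparrow 1}(x - t)\, dt$. Since $G_i$-events also occur at rate $1/\mathbb{E}\Delta = 1/\pi$ per unit real time by \eqref{Gapmean}, integrating over $x \in (t, 1)$ yields
\[
f_{D_i - G_i - 1}(t) = \pi c \int_t^1 \Pi^{0\uparrow}(dx)\, s(x, x - t)\, \mu^{\uparrow 1}(x - t).
\]
This is exactly the formula obtained in the proof of Theorem \ref{ABC}, in which the prefactor was pinned down to $\sqrt{\pi/2}$ by the normalization $\int_0^1 f_{D_i - G_i - 1}(t)\, dt = 1$. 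Matching prefactors gives $\pi c = \sqrt{\pi/2}$, hence $c = 1/\sqrt{2\pi}$. The main delicate point of this plan is justifying the bijection between $G_i$-events and L\'evy-system triples, which rests on the path-structural fact that after any jump of $\alpha$ to $1$, the process cannot revisit $0$ before completing the unit-length continuous descent defining $T_{i+1}$.
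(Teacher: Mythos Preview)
Your argument is circular. The explicit formula \eqref{jrt0} for $\Pi^{0\uparrow}$ that you invoke through Theorem~\ref{argminls} was obtained in the proof of Lemma~\ref{jr0} by writing the mean jump rate from $0$ per unit real time as $\Pi^{0\uparrow}(dy)\,\mathbb{E}l^0_1$ and then substituting $\mathbb{E}l^0_1 = 1/\sqrt{2\pi}$ --- precisely the constant you are trying to determine. What is fixed independently of $c$ is only the \emph{product} $c\,\Pi^{0\uparrow}(dy) = dy/(2\pi\sqrt{y^3(1-y)})$, namely the rate of jumps from $0$ per unit real time coming from time reversal of the $x\to 1$ jumps. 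Hence your triple-event rate $c\,\Pi^{0\uparrow}(dx)\,s(x,x-t)\,\mu^{\uparrow 1}(x-t)$ does not actually carry $c$ as a free unknown, and identifying the prefactor $\pi c$ with the $\sqrt{\pi/2}$ found in Theorem~\ref{ABC} simply returns the value already built into \eqref{jrt0}. (In the proof of Theorem~\ref{ABC} the symbol $c$ there is a generic proportionality constant pinned down by normalization; it equals $\pi\,\mathbb{E}l^0_1$ only if one already knows that \eqref{jrt0} is the L\'evy measure for the chosen normalization of $l^0$.)

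The paper's proof sidesteps this by never touching $\Pi^{0\uparrow}$. By stationarity and the renewal structure, $\mathbb{E}l^1_t/t = \mathbb{E}l^1_{[T_i,T_{i+1}]}/\mathbb{E}\Delta$. The local time at level $1$ accrues only on $[D_i,T_{i+1}]$; Lemma~\ref{lct} identifies $(1-\alpha_{D_i+s};\,0\le s\le T_{i+1}-D_i)$ with the age process of a $\tfrac12$-stable subordinator up to its first passage to $1$, so $l^1_{[D_i,T_{i+1}]}$ has the law of the first level at which an excursion of length exceeding $1$ appears, which is exponential with rate $\sqrt{2/\pi}$. Dividing its mean $\sqrt{\pi/2}$ by $\mathbb{E}\Delta=\pi$ from \eqref{Gapmean} gives $1/\sqrt{2\pi}$. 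If you want to salvage your route, you would need an independent derivation of $\Pi^{0\uparrow}$ (or of the total $\Pi^{0\uparrow}$-mass of excursions reaching $1$) that does not already presuppose $\mathbb{E}l^0_1$.
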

\begin{proof}
By stationarity of the argmin process $(\alpha_t; t \geq 0)$,
\begin{equation*}
\mathbb{E}l^1_t / t = \mathbb{E}(l^1_{[T_i, T_{i+1}]}) / \mathbb{E}(T_{i+1} - T_i),
\end{equation*}
where $l^1_{[T_i, T_{i+1}]}$ is the local times of $\alpha$ at level $1$ between $[T_i, T_{i+1}]$. Note that $l^1_{[T_i, T_{i+1}]} = l^1_{[D_i, T_{i+1}]}$. By Lemma \ref{lct} and L\'evy's theorem, $l^1_{[D_i, T_{i+1}]}$ has the same distribution as the first level above which occurs an excursion of length exceeding $1$. As seen in Section \ref{s22}, the latter is exponentially distributed with rate $\sqrt{2 / \pi}$. Thus, $\mathbb{E}(l^1_{[T_i, T_{i+1}]}) = \sqrt{\pi /2}$. Moreover, $\mathbb{E}(T_{i+1} - T_{i}) = \pi$ by \eqref{Gapmean}. From these follows \eqref{lctrateeq}.
\end{proof}
%-------------------------------------------------------------------------------------------------
\section{The argmin process of random walks and L\'evy processes}
\label{s5}
\subsection{The argmin process of random walks}
In this part, we prove Theorem \ref{discreteth}. Recall the definition of the argmin chain $(A_N(n); n \geq 0)$ from \eqref{argminchain}. Fix $N  \geq 1$. Let $(\overset{\rightarrow}{X}_N(n); n \geq 0)$ be the moving window process of length $N$, defined by
$$\overset{\rightarrow}{X}_N(n):=(X_{n+1}, \ldots, X_{n+N}) \quad \mbox{for}~n \geq 0,$$
with associated partial sums $\overset{\rightarrow}{S^X_N}(n):=(0, X_{n+1}, X_{n+1}+X_{n+2}, \ldots, \sum_{i=1}^N X_{n+i})$. Similarly, let $(\overset{\leftarrow}{X}_N(n); n \geq 0)$ be the reversed moving window process of length $N$, defined by
$$\overset{\leftarrow}{X}_N(n):=(-X_{n}, \ldots, -X_{n-N+1}) \quad \mbox{for}~n \geq N,$$
with associated partial sums $\overset{\leftarrow}{S^X_N}(n):=(0, -X_{n}, -X_{n}-X_{n-1}, \ldots, -\sum_{i=1}^N X_{n+1-i})$. 
Note that $n+A_N(n)$ is the last time at which the minimum of $(S_k; k \geq 0)$ on $[n+1,n+N]$ is attained. So $A_N(n)$ is a function of $\overset{\rightarrow}{S^X_N}(n)$ or $\overset{\leftarrow}{S^X_N}(n+N)$. 
The following path decomposition is due to Denisov.

\begin{theorem} [Denisov's decomposition for random walks, \cite{Denisov}] 
\label{DenisovRW}
Let $S_n:= \sum_{i=1}^n X_i$, where $X_i$ are independent random variables. For $N \geq 1$, let 
$$A_N:=\sup\left\{0 \leq i \leq N: S_i=\min_{1 \leq k \leq N} S_k\right\}$$
be the last time at which $(S_k; k \geq 0)$ attains its minimum on $[0,N]$. For each positive integer $a$ with $0 \le a \le N$, given the event $\{A_N = a\}$, 
the random walk is decomposed into two conditionally independent pieces:
\begin{enumerate}[(a).]
\item
$(S_{a-k}-S_{a}; 0 \leq k \leq a)$ has the same distribution as $\overset{\leftarrow \quad}{S^X_{a}}(a)$ conditioned to stay non-negative;
\item
$(S_{a+k}-S_{a}; 0 \leq k \leq N - a)$ has the same distribution as $\overset{\rightarrow \quad \quad}{S^X_{N - a}} (a)$ conditioned to stay positive.
\end{enumerate}
\end{theorem}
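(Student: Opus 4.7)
The plan is to verify the decomposition by a direct independence argument: factor the conditioning event $\{A_N = a\}$ into two events that live in disjoint sigma-algebras of increments, and then read off the conditional laws of the two resulting path fragments.

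First, fix $a \in \{0, 1, \ldots, N\}$ and write $\{A_N = a\} = L_a \cap R_a$, where
\begin{equation*}
L_a := \{S_k \geq S_a \text{ for } 1 \leq k \leq a\}, \qquad R_a := \{S_k > S_a \text{ for } a < k \leq N\}.
\end{equation*}
The non-strict inequality in $L_a$ together with the strict inequality in $R_a$ encodes the sup convention in the definition of $A_N$, namely that among all indices in $[0,N]$ attaining the minimum of $(S_k)_{1 \leq k \leq N}$ the latest is chosen. Crucially, $L_a$ is measurable with respect to $\sigma(X_1, \ldots, X_a)$ alone, while $R_a$ is measurable with respect to $\sigma(X_{a+1}, \ldots, X_N)$ alone.

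Second, the pre-path $(S_{a-k} - S_a)_{0 \leq k \leq a}$ is likewise a measurable function of $(X_1, \ldots, X_a)$, and the post-path $(S_{a+k} - S_a)_{0 \leq k \leq N-a}$ is a measurable function of $(X_{a+1}, \ldots, X_N)$. Since the two blocks of increments are independent by hypothesis and the conditioning event $L_a \cap R_a$ splits along the same boundary, the product-measure factorization gives that the pre- and post-paths are conditionally independent given $\{A_N = a\}$, and that each piece is conditionally distributed as its unconditional law restricted to the relevant event. By inspection, the post-path coincides with $\overset{\rightarrow}{S^X_{N-a}}(a)$ verbatim, and $R_a$ is exactly the event that this walk is strictly positive at times $1, \ldots, N-a$; this establishes part (b). Similarly, a direct time-reversal shows that the pre-path equals $\overset{\leftarrow}{S^X_a}(a)$ verbatim, and $L_a$ translates into the event that this reversed walk is non-negative at each intermediate time, which establishes part (a). The endpoint cases $a = 0$ and $a = N$, in which one of the two blocks is empty, are handled by direct inspection.

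The proof is short and contains no serious technical obstacle; the one point demanding care is the asymmetry between the \emph{non-negative} conditioning on the reversed pre-path and the \emph{strictly positive} conditioning on the forward post-path, which must be aligned consistently with the sup convention used to define $A_N$. Once this bookkeeping is carried out cleanly, the decomposition reduces to the independence of the two complementary blocks of increments and requires no further computation.
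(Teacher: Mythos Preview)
The paper does not supply its own proof of this statement: Theorem~\ref{DenisovRW} is quoted from Denisov \cite{Denisov} and used as a black box in the analysis of the argmin chain, so there is nothing in the paper to compare against. Your argument is correct and is essentially the standard proof of Denisov's decomposition: the factorization $\{A_N=a\}=L_a\cap R_a$ with $L_a\in\sigma(X_1,\ldots,X_a)$ and $R_a\in\sigma(X_{a+1},\ldots,X_N)$, together with the fact that the pre- and post-paths are measurable with respect to these same disjoint blocks of increments, immediately yields both the conditional independence and the identification of the two conditional laws; your handling of the strict/non-strict asymmetry matching the $\sup$ convention is the only delicate point and you treat it correctly.
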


By Denisov's decomposition for random walks, it is easy to adapt the argument of Proposition \ref{Markov} to show that $(A_N(n); n \geq 0)$ is a time-homogeneous Markov chain on $\{0,1,\cdots, N\}$. 

Now we compute the invariant distribution $\Pi_N$, and the transition matrix $P_N$ of  the argmin chain $(A_N(n); n \geq 0)$ on $\{0,1, \ldots, N\}$. To proceed further, we need the following result regarding the law of ladder epochs, originally due to Sparre Andersen \cite{SA}, Spitzer \cite{Spitzer} and Baxter \cite{Baxter}. It can be read from Feller \cite[Chapter XII$.7$]{Fellervol2}.
\begin{theorem} \cite{SA, Fellervol2}
\label{GF}
\begin{enumerate}
\item
Let $\tau_n:=\mathbb{P}(S_1 \geq 0, \ldots, S_{n-1} \geq 0, S_n<0)$ and $\tau(s): = \sum_{n=0}^{\infty} \tau_n s^n$. Then for $|s|<1$,
$$\log \frac{1}{1-\tau(s)} = \sum_{n=1}^{\infty} \frac{s^n}{n} \mathbb{P}(S_n<0).$$
\item 
Let $p_n:= \mathbb{P}(S_1 \geq 0, \ldots, S_n \geq 0)$ and $p(s): = \sum_{n=0}^{\infty} p_n s^n$. Then for $|s|<1$,
$$p(s) = \exp\left(\sum_{n=1}^{\infty} \frac{s^n}{n} \mathbb{P}(S_n \geq 0)\right).$$
\item
Let $\widetilde{p}_n:= \mathbb{P}(S_1 > 0, \ldots, S_n > 0)$ and $\widetilde{p}(s): = \sum_{n=0}^{\infty} \widetilde{p}_n s^n$. Then for $|s|<1$,
$$\widetilde{p}(s) = \exp\left(\sum_{n=1}^{\infty} \frac{s^n}{n} \mathbb{P}(S_n > 0)\right).$$
\end{enumerate}
\end{theorem}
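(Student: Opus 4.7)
The plan is to prove Part (1) by a cyclic-rotation combinatorial argument (the Sparre Andersen cycle lemma), from which Parts (2) and (3) follow by elementary generating-function manipulations together with the identity $\mathbb{P}(S_n \ge 0) + \mathbb{P}(S_n < 0) = 1$.

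For Part (1), I would introduce the strict descending ladder epoch $T := \inf\{n \ge 1 : S_n < 0\}$ and its iterates $T_0 := 0 < T_1 < T_2 < \cdots$; the strong Markov property at each $T_k$ gives $\mathbb{E}[s^{T_k}] = \tau(s)^k$ (with $\tau$ interpreted as a possibly defective distribution on $\{1,2,\ldots\}$). The heart of the argument is the identity
\begin{equation*}
\mathbb{P}(S_n < 0) \;=\; n \sum_{k \ge 1} \frac{\mathbb{P}(T_k = n)}{k}, \qquad n \ge 1,
\end{equation*}
which I would prove path-wise via the cycle lemma: for fixed $n$ and $\sigma \in \{0,1,\ldots,n-1\}$, define the rotated walk by $S_j^\sigma := X_{\sigma+1} + \cdots + X_{\sigma+j}$ with indices modulo $n$ (so that $S_n^\sigma = S_n$), and let $T_k^\sigma$ be its $k$-th strict descending ladder epoch. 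A direct combinatorial verification yields the deterministic identity
\begin{equation*}
\sum_{\sigma=0}^{n-1}\sum_{k \ge 1} \frac{\mathbf{1}\{T_k^\sigma = n\}}{k} \;=\; \mathbf{1}\{S_n < 0\},
\end{equation*}
using the bijection between strict descending ladder epochs $r$ of the original walk and rotations $\sigma_r := r \bmod n$ for which $n$ is a (necessarily last) ladder epoch of the $\sigma_r$-rotated walk. Taking expectations and exploiting exchangeability of $(X_1,\ldots,X_n)$ gives $\mathbb{E}[\mathbf{1}\{T_k^\sigma = n\}] = \mathbb{P}(T_k = n)$ for each $\sigma$, which yields the displayed identity. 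Multiplying by $s^n/n$, summing over $n \ge 1$, and using $-\log(1-x) = \sum_{k\ge 1}x^k/k$ then gives
\begin{equation*}
\sum_{n \ge 1}\frac{s^n}{n}\mathbb{P}(S_n < 0) \;=\; \sum_{k \ge 1}\frac{\tau(s)^k}{k} \;=\; \log\frac{1}{1-\tau(s)},
\end{equation*}
which is Part (1).

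For Part (2), the tail relation $p_n = \mathbb{P}(T > n)$ combined with a standard tail-generating-function calculation yields $p(s) = (1-\tau(s))/(1-s)$. Combining this with $-\log(1-s) = \sum_{n \ge 1} s^n/n$ and Part (1) gives
\begin{equation*}
\sum_{n \ge 1}\frac{s^n}{n}\mathbb{P}(S_n \ge 0) \;=\; -\log(1-s) + \log(1-\tau(s)) \;=\; \log p(s),
\end{equation*}
and exponentiating delivers Part (2). For Part (3), I would introduce the weak descending ladder epoch $T^- := \inf\{n \ge 1 : S_n \le 0\}$ with generating function $\tau^-(s) := \mathbb{E}[s^{T^-}]$; a parallel cycle-lemma argument (strict inequalities replaced by weak) gives $\sum_n (s^n/n)\mathbb{P}(S_n \le 0) = \log(1/(1-\tau^-(s)))$, and since $\tilde{p}_n = \mathbb{P}(T^- > n)$ one has $\tilde{p}(s) = (1-\tau^-(s))/(1-s)$, after which the same manipulation as for Part (2) produces Part (3).

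The main obstacle is the deterministic cycle identity itself. The quantity $\kappa^\sigma$ (number of strict descending ladder epochs of the rotated walk in $\{1,\ldots,n\}$) can genuinely depend on $\sigma$, so the weights $1/\kappa^\sigma$ in the various contributing rotations differ; establishing that they nevertheless sum to exactly $1$ on the event $\{S_n < 0\}$ requires a careful bijective argument relating the original ladder epochs to the $\sigma_r$-walks. A further subtlety is that the strict-versus-weak distinction must be tracked throughout, so that the parallel cycle lemma used in Part (3) can be applied verbatim after replacing $<$ by $\le$. Once these combinatorial points are settled, every remaining step in the proof is formal.
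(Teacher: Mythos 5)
First, a point of reference: the paper does not prove Theorem \ref{GF} at all --- it is quoted as classical, with pointers to Sparre Andersen and to Feller, Vol.~II, Ch.~XII.7 --- so there is no in-paper argument to compare yours against. Your overall architecture (prove $\mathbb{P}(S_n<0)=n\sum_{k\ge1}\mathbb{P}(T_k=n)/k$ by a cyclic-rotation argument, sum against $s^n/n$ to get Part (1), then deduce Parts (2) and (3) from the tail relations $p_n=\mathbb{P}(T>n)$ and $\widetilde p_n=\mathbb{P}(T^{-}>n)$) is the classical Sparre Andersen route and is sound; the reductions of (2) and (3) to (1) and its weak-inequality analogue are routine and correct as you describe them.

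The gap is in the one step you yourself identify as the crux, and the hint you give for closing it is wrong. The contributing rotations are \emph{not} in bijection with the strict descending ladder epochs of the original walk via $r\mapsto r\bmod n$. Take $n=4$ and $(x_1,\dots,x_4)=(-1,-5,2,1)$, so $S=(-1,-6,-4,-3)$ and $S_4<0$. The original walk has two strict descending ladder epochs, $r=1$ and $r=2$; but for $\sigma=1$ (increments $(-5,2,1,-1)$, partial sums $(-5,-3,-2,-3)$) the time $n=4$ is \emph{not} a ladder epoch since $S^{1}_4=-3>-5=S^{1}_1$, and the only contributing rotation is $\sigma=2$ (partial sums $(2,3,2,-3)$, with $T^{2}_1=4$, weight $1$). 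So neither your index set nor the picture of ``differing weights that nevertheless sum to $1$'' is correct. The actual mechanism is this: extend periodically by $x_{j+n}:=x_j$, so $S_{j+n}=S_j+S_n$, and call $j$ a \emph{cyclic} ladder epoch if $S_j<S_i$ for all $i\in[j-n,j)$. When $S_n<0$ this set is nonempty and $n$-periodic; let $L\ge1$ be its cardinality per period. One then checks (a) $n$ is a strict descending ladder epoch of the $\sigma$-rotated walk iff $\sigma$ is a cyclic ladder epoch mod $n$, and (b) for such $\sigma$ the ladder epochs of the rotated walk in $\{1,\dots,n\}$ are exactly the cyclic ladder epochs in $(\sigma,\sigma+n]$ (the point being that $\sigma$ cyclic forces $S_m\ge S_\sigma>S_{\sigma+j}$ for $m\in[\sigma+j-n,\sigma)$, which supplies the missing part of the length-$n$ window). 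Hence every contributing rotation has $\kappa^{\sigma}=L$ and $T^{\sigma}_L=n$, and the sum is $L\cdot(1/L)=1$: the weights are all equal, not merely conspiring to sum to $1$. With this replacement (and its verbatim weak-inequality version for Part (3)), your proof goes through.
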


In the sequel, let $T_{-}:=\inf\{n \geq 1; S_n<0\}$ and $\widetilde{T}_{-}:=\inf\{n \geq 1; S_n \leq 0\}$ so that $p_n = \mathbb{P}(T_{-} > n)$ and $\widetilde{p}_n = \mathbb{P}(\widetilde{T}_{-} > n)$. 

\begin{proof}[Proof of Theorem \ref{discreteth}]
Observe that the distribution of the argmin of sums on $\{0,1,\cdots,N\}$ is the stationary distribution of the argmin chain. Following Feller \cite[Chapter XII.8]{Feller}, this is the discrete arcsine law
\begin{equation*}
\Pi_N(k) = p_k \widetilde{p}_{N-k} \quad \mbox{for}~0 \leq k \leq N.
\end{equation*}
Let $t_i:=\mathbb{P}(T_{-}=i) = p_{i-1} - p_i$ and $\widetilde{t}_i:=\mathbb{P}(\widetilde{T}_{-}=i) = \widetilde{p}_{i-1} - \widetilde{p}_i$ for $i >0$. Now we calculate the transition probabilities of the argmin chain. We distinguish two cases.
\vskip 6 pt
{\bf Case $1$.} The argmin chain starts at $0 < i \leq N$: $A_N(0) = i$. This implies that for all $k \in [1, i-1]$, $S_k \geq S_i$, and for all $k \in [i+1, N]$, $S_k > S_i$.
\begin{itemize}
\item 
If $S_{N+1}>S_i$, then the last time at which $(S_n)_{1 \leq n \leq N+1}$ attains its minimum is $i$, meaning that $A_{N}(1) = i-1$.
\item
If $S_{N+1} = S_i$, then the last time at which $(S_n)_{1 \leq n \leq N+1}$ attains its minimum is $N+1$, meaning that $A_N(1) = N$.
\end{itemize}
If we look forward from time $i$, $N+1$ is the first time at which the chain enters $(-\infty, 0]$. Consequently, for $0 < i \leq N$,
\begin{equation}
P_N(i,N) = \frac{\widetilde{t}_{N+1-i}}{\widetilde{p}_{N-i}} \quad \mbox{and} \quad P_N(i,i-1) = 1 - P_N(i,N),
\end{equation}
which leads to \eqref{trans1}.
\vskip 6 pt
{\bf Case $2$.} The argmin chain starts at $i=0$: $A_N(0) = 0$. For $0 \leq j <N$, let $j+1$ be the last time at which the minimum on $[1,N]$ is attained. 
\begin{itemize}
\item 
If $S_{N+1}>S_{j+1}$, then the last time at which $(S_n)_{1 \leq n \leq N+1}$ attains its minimum is $j+1$, meaning that $A_N(1)=j$.
\item
If $S_{N+1} = S_{j+1}$, then the last time at which $(S_n)_{1 \leq n \leq N+1}$ attains its minimum is $N+1$, meaning that $A_N(1)=N$.
\end{itemize}
If we look backward from time $j+1$, the origin is the first time at which the reversed walk enters $(-\infty, 0)$. So for $0 \leq j <N$,
\begin{equation}
P_{N}(0,j) = \frac{t_{j+1} \widetilde{p}_{N-j}}{\widetilde{p}_N},
\end{equation}
which yields \eqref{trans2}. The above formula fails for $j=N$, but $P_N(0, N) = 1 -\sum_{j=0}^{N-1}P_N(0,j)$. 
\end{proof}
%------------------------------------------------------------------------------------------
\fbox{\bf $F$ is continuous and $\mathbb{P}(S_n>0) = \theta \in (0,1)$} From Theorem \ref{GF}, we deduce the well known facts that
\begin{equation*}
\log p(s) = \theta \sum_{n=1}^{\infty} \frac{s^n}{n} \Longrightarrow p(s) = (1-s)^{-\theta} = 1+ \sum_{n=1}^{\infty} \frac{(\theta)_{n \uparrow}}{n!} s^n,
\end{equation*}
where $(\theta)_{n \uparrow}: = \prod_{i=0}^{n-1} (\theta+i)$ is the {\em Pochhammer symbol}. This implies that
\begin{equation}
\label{pcts}
p_n = \widetilde{p}_n = \frac{(\theta)_{n \uparrow}}{n!} \quad \mbox{for all}~n>0.
\end{equation}
By injecting \eqref{pcts} into \eqref{stationaryPi}, \eqref{trans1} and \eqref{trans2}, we get \eqref{disarcsine}, \eqref{eq5354} and \eqref{eq55}. The formula \eqref{eq56} is obtained by the following lemma.

\begin{lemma}
\label{lem11}
\begin{equation*}
P_{N}(0,N) = \frac{2(1-\theta)}{N+1} - \frac{(1 - 2 \theta)(2 \theta)_{N \uparrow}}{(N+1)(\theta)_{N \uparrow}}.
\end{equation*}
\end{lemma}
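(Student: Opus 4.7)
The plan is to reduce the lemma to a Chu--Vandermonde convolution of Pochhammer symbols. Starting from $P_N(0,N) = 1 - \sum_{j=0}^{N-1}P_N(0,j)$ and the closed form \eqref{eq55}, the task reduces to evaluating
\[
\mathcal{S}_N \;:=\; \sum_{j=0}^{N-1}\frac{1}{j+1}\binom{N}{j}\frac{(\theta)_{j\uparrow}(\theta)_{N-j\uparrow}}{(\theta)_{N\uparrow}},
\]
after which $P_N(0,N) = 1 - (1-\theta)\,\mathcal{S}_N$ gives the claim.

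First I would use the elementary identity $\tfrac{1}{j+1}\binom{N}{j} = \tfrac{1}{N+1}\binom{N+1}{j+1}$ and the reindexing $k=j+1$ to obtain
\[
(N+1)(\theta)_{N\uparrow}\,\mathcal{S}_N \;=\; \sum_{k=1}^{N}\binom{N+1}{k}(\theta)_{k-1\uparrow}(\theta)_{N+1-k\uparrow}.
\]
The right-hand side does not quite fit the Chu--Vandermonde template $\sum_{k}\binom{n}{k}(a)_{k\uparrow}(b)_{n-k\uparrow} = (a+b)_{n\uparrow}$ because the left Pochhammer factor is $(\theta)_{k-1\uparrow}$ rather than $(\theta-1)_{k\uparrow}$. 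The key algebraic observation is the shift identity
\[
(\theta-1)_{k\uparrow} \;=\; (\theta-1)(\theta)_{k-1\uparrow} \qquad (k\geq 1),
\]
which realigns the two Pochhammer patterns. Applying Chu--Vandermonde with $(a,b,n)=(\theta-1,\theta,N+1)$, separating the $k=0$ term $(\theta)_{N+1\uparrow}$, and dividing by $\theta-1$ yields
\[
\sum_{k=1}^{N+1}\binom{N+1}{k}(\theta)_{k-1\uparrow}(\theta)_{N+1-k\uparrow} \;=\; \frac{(2\theta-1)_{N+1\uparrow}-(\theta)_{N+1\uparrow}}{\theta-1}.
\]
Subtracting the $k=N+1$ boundary term, which equals $(\theta)_{N\uparrow}$, produces a closed form for $\mathcal{S}_N$.

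Finally I would simplify by using the factorizations $(2\theta-1)_{N+1\uparrow} = (2\theta-1)(2\theta)_{N\uparrow}$ and $(\theta)_{N+1\uparrow} = (\theta+N)(\theta)_{N\uparrow}$. Substituting into $P_N(0,N) = 1 - (1-\theta)\mathcal{S}_N$ and collecting terms, the constant piece $\tfrac{2(1-\theta)}{N+1}$ and the correction $-\tfrac{(1-2\theta)(2\theta)_{N\uparrow}}{(N+1)(\theta)_{N\uparrow}}$ emerge as in the statement. The argument is entirely symbolic; the only conceptual step is the shift trick realigning $(\theta)_{k-1\uparrow}$ with $(\theta-1)_{k\uparrow}$, and the main obstacle is the careful bookkeeping of the boundary terms at $k=0$ and $k=N+1$ when passing from the full Chu--Vandermonde sum to the required partial sum over $1\leq k\leq N$.
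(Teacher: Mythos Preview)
Your argument is correct. The Chu--Vandermonde application with $(a,b,n)=(\theta-1,\theta,N+1)$ and the shift $(\theta-1)_{k\uparrow}=(\theta-1)(\theta)_{k-1\uparrow}$ is valid for $\theta\in(0,1)$ (so the division by $\theta-1$ is harmless), and the boundary bookkeeping at $k=0$ and $k=N+1$ checks out; carrying it through gives exactly $\frac{2(1-\theta)}{N+1}-\frac{(1-2\theta)(2\theta)_{N\uparrow}}{(N+1)(\theta)_{N\uparrow}}$.

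The paper's proof is the same convolution identity in generating-function clothing, but organized differently. Rather than starting from the simplified form \eqref{eq55}, it returns to the raw transition probability $P_N(0,j)=(p_j-p_{j+1})p_{N-j}/p_N$ and evaluates $\sum_{j} p_j p_{N-j}$ and $\sum_{j} p_{j+1} p_{N-j}$ separately by reading off coefficients in $(1-s)^{-2\theta}=\bigl(\sum p_n s^n\bigr)^2$. This is Chu--Vandermonde with $a=b=\theta$ at degrees $N$ and $N+1$, so no parameter shift is needed and the $\tfrac{1}{j+1}$ factor never appears. Your route requires the extra shift trick to absorb $\tfrac{1}{j+1}$, but it has the merit of working directly from the closed form \eqref{eq55} without unpacking it back into $p_j$'s; the paper's route is slightly shorter because it sidesteps that factor entirely.
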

\begin{proof}
Note that $P_N(0,N) = 1 - \sum_{j=0}^{N-1} P_N(0,j)$. Thus , it suffices to show that
\begin{equation*}
\sum_{j=0}^{N-1} p_j p_{N-j} - \sum_{j=0}^{N-1} p_{j+1} p_{N-j} = \frac{1}{(N+1) !} \Bigg[ (N-2 \theta-1)(\theta)_{N \uparrow} + (1-2 \theta) (2 \theta)_{N \uparrow}\Bigg].
\end{equation*}
Furthermore, for $|s|<1$,
\begin{equation*}
(1-s)^{-2\theta} = \left(\sum_{j=0}^{\infty}p_j s^j \right)^2 = \sum_{N=0}^{\infty} \left(\sum_{j=0}^N p_j p_{N-j} \right)s^j.
\end{equation*}
By identifying the coefficients on both sides, we get
\begin{equation*}
\sum_{j=0}^N p_j p_{N-j} = \frac{(2 \theta)_{N \uparrow}}{N !} \quad \mbox{and} \quad \sum_{j=0}^{N+1} p_jp_{N+1-j} = \frac{(2 \theta)_{N+1 \uparrow}}{(N+1) !},
\end{equation*}
%Therefore,
%\begin{equation*}
%\sum_{j=0}^{N-1} p_j p_{N-j} - \sum_{j=0}^{N-1} p_{j+1} p_{N-j} = \left[ \frac{(2 \theta)_{N \uparrow}}{N !} - \frac{(\theta)_{N \uparrow}}{N !}\right] - \left[\frac{(2 \theta)_{N+1 \uparrow}}{(N+1) !} - \frac{(\theta)_{N+1 \uparrow}}{(N+1) !}\right],
%\end{equation*}
which leads to the desired result.
\end{proof}

When $F$ is symmetric and continuous, the above results can be simplified. In this case, $\mathbb{P}(S_n \geq 0) = \mathbb{P}(S_n>0) = \frac{1}{2}$.
\begin{corollary}
Assume that $F$ is symmetric and continuous. Then the stationary distribution of the argmin chain $(A_N(n); n \geq 0)$ is given by
\begin{equation}
\Pi_N(k) = \binom{2k}{k} \binom{2N-2k}{N-k} 2^{-2N} \quad \mbox{for}~0 \leq k \leq N.
\end{equation}
In addition, the transition probabilities are
\begin{equation}
P_N(i,N) = \frac{1}{2(N+1-i)} \quad \mbox{and} \quad P_N(i,i-1) = \frac{2N +1 -2i}{2(N+1-i)} \quad \mbox{for}~0<i \leq N;
\end{equation}
\begin{equation}
P_N(0,j) = \frac{\binom{N}{j}^2}{2(j+1)\binom{2N}{2j}} \quad \mbox{for}~0 \leq j <N \quad \mbox{and} \quad P_N(0,N) = \frac{1}{N+1}.
\end{equation}
\end{corollary}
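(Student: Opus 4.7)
The plan is to derive this corollary as a direct specialization of Theorem \ref{discreteth}(1) to $\theta = 1/2$. First, I would argue that the symmetry and continuity hypotheses on $F$ force the positivity parameter to equal $1/2$: symmetry gives $\mathbb{P}(S_n > 0) = \mathbb{P}(S_n < 0)$ for every $n \geq 1$ (via the distributional identity $(X_i) \stackrel{(d)}{=} (-X_i)$), and continuity of $F$ implies $\mathbb{P}(S_n = 0) = 0$, so $\theta := \mathbb{P}(S_n > 0) = 1/2$. Hence the full strength of Theorem \ref{discreteth}(1) applies with this choice.

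Next I would carry out the routine Pochhammer-to-binomial conversion. Using
\[
\left(\tfrac{1}{2}\right)_{n\uparrow} = \frac{1 \cdot 3 \cdots (2n-1)}{2^n} = \frac{(2n)!}{4^n \, n!},
\]
we obtain $p_n = \widetilde{p}_n = (1/2)_{n\uparrow}/n! = \binom{2n}{n} 2^{-2n}$. Substituting into \eqref{disarcsine} gives the stated formula for $\Pi_N(k)$, while \eqref{eq5354} with $\theta = 1/2$ produces $P_N(i,N) = 1/[2(N+1-i)]$ and $P_N(i,i-1) = (2N+1-2i)/[2(N+1-i)]$.

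For $P_N(0,j)$ with $0 \leq j < N$, I would plug into \eqref{eq55} the identities above, obtaining
\[
P_N(0,j) = \frac{1/2}{j+1}\binom{N}{j} \cdot \frac{\frac{(2j)!}{4^j j!} \cdot \frac{(2N-2j)!}{4^{N-j}(N-j)!}}{\frac{(2N)!}{4^N N!}} = \frac{\binom{N}{j}^2}{2(j+1)\binom{2N}{2j}},
\]
after combining factorials. Finally, the formula $P_N(0,N) = 1/(N+1)$ is immediate from \eqref{eq56}: since $1 - 2\theta = 0$ at $\theta = 1/2$, the second term vanishes and only $2(1-\theta)/(N+1) = 1/(N+1)$ survives.

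There is no real obstacle here — every step is a mechanical substitution into the already-proven Theorem \ref{discreteth}(1), and the only non-trivial simplification is the Pochhammer identity at $1/2$, which is classical. The appeal of stating the corollary separately is purely cosmetic: the symmetric case yields the clean central-binomial-coefficient arcsine weights $\binom{2k}{k}\binom{2N-2k}{N-k} 2^{-2N}$, familiar from Feller's treatment of the simple random walk in \cite[Chapter XII.8]{Feller}.
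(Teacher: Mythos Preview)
Your proposal is correct and matches the paper's own approach exactly: the paper simply remarks that when $F$ is symmetric and continuous one has $\mathbb{P}(S_n \geq 0) = \mathbb{P}(S_n > 0) = \tfrac{1}{2}$, and then states the corollary as the $\theta = 1/2$ specialization of Theorem~\ref{discreteth}(1). You have merely spelled out the Pochhammer-to-binomial substitutions that the paper leaves implicit.
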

%-------------------------------------------------------------------------------
\fbox{\bf Simple symmetric random walks} In \cite[Chapter III.3]{Feller}, Feller found for a simple symmetric walk,
\begin{equation}
\label{ptil2}
\widetilde{p}_{2n} = \widetilde{p}_{2n+1} = \frac{(\frac{1}{2})_{n \uparrow}}{2 \cdot n!} \quad \mbox{for all}~n \geq 1,
\end{equation}
and 
\begin{equation}
\label{p2}
p_{2n-1} = p_{2n} = \frac{(\frac{1}{2})_{n \uparrow}}{ n!} \quad \mbox{for all}~n \geq 1.
\end{equation}

By injecting \eqref{ptil2} and \eqref{p2} into \eqref{stationaryPi}, \eqref{trans1} and \eqref{trans2}, we get \eqref{eq513}, \eqref{eq514} and \eqref{eq515}. The formula \eqref{eq516} is obtained by the following lemma.
\begin{lemma}
\label{lem12}
\begin{equation*}
P_N(0,N) = \left\{ \begin{array}{ccl}
         \frac{1}{N+1} & \mbox{if}~ N ~\mbox{is odd}; \\ [8 pt]
         \frac{2}{N+2} & \mbox{if} ~N ~\mbox{is even}.
                \end{array}\right.
\end{equation*}
\end{lemma}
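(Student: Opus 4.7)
The plan is to combine \eqref{eq515} with the complementarity relation
$P_N(0,N) = 1 - \sum_{j=0}^{N-1} P_N(0,j)$
and reduce the resulting finite sum to a classical Catalan convolution. Since the odd-indexed terms in \eqref{eq515} vanish, writing $M := \lfloor N/2 \rfloor$ and substituting $j = 2k$ gives
\begin{equation*}
\sum_{j=0}^{N-1} P_N(0,j) = \frac{1}{2 \binom{2M}{M}} \sum_{k=0}^{K} C_k \binom{2M-2k}{M-k},
\end{equation*}
where $C_k = \frac{1}{k+1}\binom{2k}{k}$ is the $k$-th Catalan number, $K = M$ when $N = 2M+1$ is odd, and $K = M-1$ when $N = 2M$ is even. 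Here I have used the identity $\binom{2k}{k}/(2k+2) = C_k/2$.

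The core step is the convolution identity
\begin{equation}
\label{catconv}
\sum_{k=0}^{M} C_k \binom{2M-2k}{M-k} = \frac{1}{2} \binom{2M+2}{M+1},
\end{equation}
which I would derive by multiplying the generating functions $\sum_k C_k x^k = (1-\sqrt{1-4x})/(2x)$ and $\sum_k \binom{2k}{k} x^k = 1/\sqrt{1-4x}$ to obtain
\begin{equation*}
\frac{1}{2x}\left(\frac{1}{\sqrt{1-4x}} - 1\right) = \frac{1}{2}\sum_{k \geq 0} \binom{2k+2}{k+1} x^k,
\end{equation*}
and then reading off the coefficient of $x^M$.

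In the odd case $N = 2M+1$, identity \eqref{catconv} applies directly with $K = M$. Using $\binom{2M+2}{M+1} = \frac{2(2M+1)}{M+1}\binom{2M}{M}$ gives $\sum_{j=0}^{N-1}P_N(0,j) = \frac{2M+1}{2(M+1)} = \frac{N}{N+1}$, so $P_N(0,N) = \frac{1}{N+1}$. In the even case $N = 2M$, I remove the $k=M$ term from \eqref{catconv}, namely $C_M = \binom{2M}{M}/(M+1)$, to get $\sum_{k=0}^{M-1} C_k \binom{2M-2k}{M-k} = \frac{2M}{M+1}\binom{2M}{M}$, yielding $\sum_{j=0}^{N-1}P_N(0,j) = \frac{M}{M+1} = \frac{N}{N+2}$ and therefore $P_N(0,N) = \frac{2}{N+2}$.

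The only step requiring any care is recognizing the Catalan structure in \eqref{eq515} and verifying the generating-function identity \eqref{catconv}; once this is in place the rest of the proof is elementary algebra, with the two parities handled by inclusion or exclusion of the $k=M$ Catalan term.
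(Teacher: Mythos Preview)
Your proof is correct. Both you and the paper begin with the complementarity $P_N(0,N)=1-\sum_{j=0}^{N-1}P_N(0,j)$, but then proceed differently. The paper works from the general formula \eqref{trans2}, $P_N(0,j)=(p_j-p_{j+1})\widetilde{p}_{N-j}/\widetilde{p}_N$, and appeals to the Sparre Andersen identity $p(s)\widetilde{p}(s)=1/(1-s)$ from Theorem~\ref{GF}, which yields $\sum_{j=0}^{N}p_j\widetilde{p}_{N-j}=1$ for every $N$; a telescoping argument then leaves a parity-dependent boundary term that gives the result. You instead start from the already-specialized SSRW formula \eqref{eq515}, recognize the Catalan structure, and evaluate the Cauchy product of $\sum_k C_k x^k$ and $\sum_k\binom{2k}{k}x^k$ explicitly. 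Your argument is more combinatorial and entirely self-contained, bypassing the fluctuation-theory input; the paper's is terser and runs in parallel with their proof of Lemma~\ref{lem11}, but leans on Theorem~\ref{GF}. Since for the SSRW the sequences $p_n,\widetilde{p}_n$ are precisely rescaled central binomial coefficients, the two generating-function identities are specializations of one another, so the proofs are close cousins rather than genuinely independent arguments.
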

\begin{proof}
Note that $P_N(0,N) = 1 -\sum_{j=0}^{N-1} P_N(0,j)$. Thus, it suffices to show that
\begin{equation*}
\sum_{j=0}^{N-1} p_j p_{N-j} - \sum_{j=0}^{N-1} p_{j+1}p_{N-j} = 
\left\{ \begin{array}{ccl}
         \frac{N}{N+1}\widetilde{p}_N & \mbox{if}~ N ~\mbox{is odd}; \\ 
         \frac{N}{N+2}\widetilde{p}_N & \mbox{if} ~N ~\mbox{is even}.
                \end{array}\right.
\end{equation*}
Furthermore, for $s<1$,
\begin{equation*}
\frac{1}{1-s} = \left(\sum_{j=0}^{\infty}p_j s^j \right) \left(\sum_{j=0}^{\infty}\widetilde{p}_j s^j \right) = \sum_{N=0}^{\infty} \left(\sum_{j=0}^N p_j \widetilde{p}_{N-j} \right)s^j.
\end{equation*}
By identifying the coefficients on both sides, we get
\begin{equation*}
\sum_{j=0}^N p_j \widetilde{p}_{N-j} = \sum_{j=0}^{N+1} p_j \widetilde{p}_{N+1-j} = 1,
\end{equation*}
%Therefore,
%\begin{align*}
%\sum_{j=0}^{N-1} p_j p_{N-j} - \sum_{j=0}^{N-1} p_{j+1}p_{N-j} = (1-p_N) - (1-p_{N+1}-\widetilde{p}_{N+1}),
%\end{align*}
which leads to the desired result.
\end{proof}
%-------------------------------------------------------------------------------------------------
\subsection{The argmin process of L\'evy processes}
We consider the argmin process $(\alpha^X_t; t \geq 0)$ of a L\'evy process $(X_t; t \geq 0)$. 
According to the L\'evy-Khintchine formula, the characteristic exponent of $(X_t; t \geq 0)$ is given by
$$\Psi_X (\theta) : = ia \theta + \frac{\sigma^2}{2} \theta^2 + \int_{\mathbb{R}} (1-e^{i\theta x} + i \theta x 1_{\{|x|<1\}}) \Pi(dx),$$ 
where $a \in \mathbb{R}$, $\sigma \geq 0$, and  $\Pi(\cdot)$ is the L\'evy measure satisfying $\int_{\mathbb{R}} \min(1,x^2) \Pi(dx) < \infty$. The L\'evy process $X$ is a compound Poisson process if and only if 
$\sigma = 0$ and $\Pi(\mathbb{R}) < \infty$. 
In this case, the process $X$ has the following representation:
\begin{equation}
\label{CPP}
X_t = ct + \sum_{i=1}^{N_t} Y_i \quad \mbox{for all}~t>0,
\end{equation}
where 
$c = -a - \int_{|x|<1} x \Pi(dx)$,
$(N_t; t \geq 0)$ is a Poisson process with rate $\lambda$, and $(Y_i; i \geq 1)$ are independent and identically distributed random variables with cumulative distribution function $F$, independent of $N$ and satisfying $\lambda F(dx) = \Pi(dx)$.  
See Bertoin \cite{Bertoin} and Sato \cite{Sato} for further development on L\'evy processes.

Assume that $X$ is not a compound Poisson process with drift, which is equivalent to
\begin{itemize}
\item[(CD).]
For all $t>0$, $X_t$ has a continuous distribution; that is for all $x \in \mathbb{R}$,
$\mathbb{P}(X_t = x) = 0$.
\end{itemize}
See Sato \cite[Theorem 27.4]{Sato}.
For $A \in \mathcal{B}(\mathbb{R})$, let 
$T_A : = \inf\{t>0: X_t \in A\}$
be the hitting time of $A$ by $(X_t; t \geq 0)$. 
Recall that $0$ is regular for the set $A$ if $\mathbb{P}(T_A = 0) = 1$.
According to Blumenthal's zero-one law, $0$ is regular for at least one of the half-lines $(-\infty,0)$ and $(0,\infty)$. There are three subcases: 
\begin{itemize}
\item[(RB).]
$0$ is regular for both half-lines $(-\infty,0)$ and $(0,\infty)$;
\item[(R$+$).]
$0$ is regular for the positive half-line $(0,\infty)$ but not for the negative half-line $(-\infty,0)$;
\item[(R$-$).]
$0$ is regular for the negative half-line $(-\infty,0)$ but not for the positive half-line $(0,\infty)$.
\end{itemize}

Millar \cite{Millar78} proved that almost surely $(X_t; 0 \leq t \leq 1)$ achieves its minimum at a unique time $A \in [0,1]$, and
\begin{itemize}
\item
under the assumption (RB), $X_{A-} = X_A = \inf_{t \in [0,1]}X_t$ almost surely;  
\item
under the assumption (R$+$), $X_{A-} > X_A = \inf_{t \in [0,1]}X_t$ almost surely;
\item
under the assumption (R$-$), $X_A > X_{A-} = \inf_{t \in [0,1]}X_t$ almost surely.
\end{itemize}
The following result is a simple consequence of Millar \cite[Proposition 4.2]{Millar78}.

\begin{theorem} \cite{Millar78}
\label{M78}
Assume that $(X_t; 0 \leq t \leq 1)$ is not a compound Poisson process with drift. Let $A$ be the a.s. unique time such that
$
\inf_{t \in [0,1]} X_t = \min (X_{A-}, X_A).
$
Given $A$, the L\'evy path is decomposed into two conditionally independent pieces:
$$\left(X_{(A-t)-}-\inf_{u \in [0,1]}X_u; 0 \leq t \leq A\right) \quad \mbox{and} \quad \left(X_{A+t}-\inf_{u \in [0,1]}X_u; 0 \leq t \leq 1-A\right).$$
\end{theorem}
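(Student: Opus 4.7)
The plan is to transfer Denisov's random walk decomposition (Theorem \ref{DenisovRW}) to the L\'evy setting by dyadic sampling and a weak-limit argument. This is essentially the strategy Denisov originally used to pass from random walks to Brownian motion, and it adapts to L\'evy processes once the uniqueness of the argmin is in hand.

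I would begin by establishing that, under the hypothesis that $X$ is not a compound Poisson process with drift, there almost surely exists a unique time $A \in [0,1]$ at which the infimum is attained in the sense $\inf_{t \in [0,1]} X_t = \min(X_{A-}, X_A)$. Condition (CD) (continuous one-dimensional marginals) combined with Blumenthal's zero-one law splits the analysis into the three regularity regimes (RB), (R$+$), (R$-$); in each regime a short argument based on the strong Markov property rules out ties for the infimum and identifies which of $X_{A-}$ or $X_A$ equals it, as recorded just before the theorem statement. Next, sample the path dyadically: set $S^{(n)}_k := X_{k/n}$ for $0 \leq k \leq n$, and let $A^{(n)} := n^{-1} \sup\{k : S^{(n)}_k = \min_{0 \leq j \leq n} S^{(n)}_j\}$. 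Since each $X_{k/n}$ has a continuous distribution, ties in the discrete sample are excluded almost surely, so Theorem \ref{DenisovRW} applies: conditionally on $A^{(n)}$, the pre-$A^{(n)}$ sample read in reverse is distributed as the walk conditioned to stay nonnegative of length $nA^{(n)}$, the post-$A^{(n)}$ sample as the walk conditioned to stay strictly positive of length $n(1-A^{(n)})$, and the two pieces are independent.

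I would then pass to the limit $n \to \infty$. Two convergence statements are needed: (i) $A^{(n)} \to A$ in probability, using the uniqueness from the first step together with the right-continuity of paths; and (ii) joint convergence in the Skorokhod topology of the time-reversed left fragment and the forward right fragment of the sampled path to the two pieces of $X$ straddling $A$. Once (i)--(ii) are combined with a suitable continuity of the conditional laws of the fragments as functions of $A^{(n)}$, conditional independence passes to the limit and yields the decomposition asserted in the theorem.

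The hardest step will be a clean treatment of regime (R$+$), where the infimum is attained by a downward jump at $A$ and $X_{A-} > X_A$. Here the discrete argmin $\lfloor nA^{(n)}\rfloor$ must land on the correct side of the jump, and one must simultaneously show that the pre-$A$ sampled path, read backward from $A^{(n)}$, converges to the time-reversed left fragment of $X$ without interference from the jump. The analogous subtlety appears under (R$-$). This is what distinguishes Millar's theorem from Denisov's theorem for Brownian motion, and it could alternatively be handled through the excursion theory of the reflected process $X - \underline{X}$ at the excursion straddling time $1$, which cleanly isolates the excursion containing the infimum from the remainder of the path and identifies the two conditional laws as time-reversed and forward versions of $X$ conditioned to stay nonnegative.
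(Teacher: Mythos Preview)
The paper does not prove this theorem itself: it is stated with attribution to Millar and introduced as ``a simple consequence of Millar \cite[Proposition 4.2]{Millar78}.'' Millar's own argument is not a discretization; it goes through the general theory of splitting times for Markov processes (last-exit decompositions in the spirit of Getoor--Sharpe and Millar), applied at the time the process attains its overall minimum. So there is no in-paper proof to match your sketch against, only the cited one, and that cited proof takes a different route from yours.

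Your discretization outline has a genuine gap at the limiting step. Conditional independence does not pass through weak limits: if $(U_n,V_n)$ are conditionally independent given $W_n$ for each $n$ and $(U_n,V_n,W_n)\Rightarrow(U,V,W)$, one cannot conclude that $U$ and $V$ are conditionally independent given $W$. What is required is convergence of the \emph{conditional} laws $\mathcal{L}(U_n\mid W_n)$ and $\mathcal{L}(V_n\mid W_n)$, not merely of the joint law, and this is exactly the phrase ``a suitable continuity of the conditional laws of the fragments as functions of $A^{(n)}$'' that you assert but do not establish. For the post-minimum fragment, that continuity amounts to knowing that the conditional law of $(X_{A+t}-\inf X;\,0\le t\le 1-A)$ given $A$ depends on $1-A$ in a weakly continuous way---which is essentially the content of the theorem you are trying to prove. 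The discretization therefore does not reduce the difficulty; it relocates it. The Skorokhod-continuity issues you flag in regimes (R$+$) and (R$-$), where the splitting occurs at a jump, compound the problem: the map that cuts a c\`adl\`ag path at a moving random time is not continuous in the $J_1$ topology at jump times.

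If you want a self-contained argument that avoids Millar's splitting-time machinery, the excursion-theoretic route you mention at the end is the one that actually closes: decompose $X-\underline{X}$ via It\^o's excursion theory, isolate the excursion straddling time $1$, and use the strong Markov property of the excursion point process. This is the approach of Greenwood--Pitman \cite{GreenPit} and, for L\'evy meanders specifically, Chaumont--Doney \cite{CD10} and Uribe Bravo \cite{UB14}. That line gives the conditional laws of the two fragments directly and makes the conditional independence transparent, without any limiting procedure.
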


In \cite{Millar78}, Millar provided the law of the post-$A$ process $\left(X_{A+t}-\inf_{t \in [0,1]}X_t; 0 \leq t \leq 1-A\right)$ but he did not mention the law of the pre-$A$ process $\left(X_{(A-t)-}-\inf_{t \in [0,1]}X_t; 0 \leq t \leq A\right)$. Relying on Chaumont-Doney's construction \cite{CD10} of L\'evy meanders, Uribe Bravo \cite{UB14} proved that if $(X_t; 0 \leq t \leq 1)$ is not a compound Poisson process with drift and satisfies the assumption (RB), then
\begin{itemize}
\item
$\left(X_{(A-t)-}-\inf_{u \in [0,1]}X_u; 0 \leq t \leq A\right)$ is a L\'evy meander of length $A$;
\item
$(X_{A+t}-\inf_{u \in [0,1]}X_u; 0 \leq t \leq 1-A)$ is a L\'evy meander of length $1-A$.
\end{itemize}
This result generalizes Denisov's decomposition to L\'evy processes with continuous distribution. 
Since a compound Poisson process is a continuous random walk, with Denisov's decomposition for random walks, it is easy to extend Theorem \ref{M78} to:
\begin{corollary} 
\label{haha}
Let $(X_t; 0 \leq t \leq 1)$ be a real-valued L\'evy process. Let 
$$A : = \sup\left\{0 \leq s \leq 1: X_{s} = \inf_{u \in [0,1]} X_{u}\right\}$$
be the last time at which $X$ achieves its minimum on $[0,1]$. Given $A$, the path of $X$ is decomposed into two conditionally independent pieces:
$$\left(X_{(A-t)-}-\inf_{u \in [0,1]}X_u; 0 \leq t \leq A\right) \quad \mbox{and} \quad \left(X_{A+t}-\inf_{u \in [0,1]}X_u; 0 \leq t \leq 1-A\right).$$
\end{corollary}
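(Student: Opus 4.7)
The plan is to reduce Corollary \ref{haha} to Theorem \ref{M78} (supplemented by the Uribe Bravo \cite{UB14} identification of the two pieces as L\'evy meanders), which covers every L\'evy process satisfying condition (CD). By Sato \cite[Theorem 27.4]{Sato}, (CD) fails only for compound Poisson processes with drift, so the remaining case is
$$X_t = ct + \sum_{i=1}^{N_t} Y_i,$$
with $c \in \mathbb{R}$, $(N_t)$ a rate-$\lambda$ Poisson process, and $(Y_i)$ i.i.d.\ independent of $N$.

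For this case I would condition on $\{N_1 = n\}$ and on the ordered jump times $0 < \tau_1 < \cdots < \tau_n < 1$. Conditional on this data, the jumps $(Y_1, \ldots, Y_n)$ remain i.i.d., and the path of $X$ on $[0,1]$ is obtained from the embedded random walk $S_i := X_{\tau_i}$, $0 \le i \le n$, by deterministic piecewise-linear interpolation of slope $c$. For $c \ge 0$ the path is non-decreasing between jumps, so $\inf_{u \in [0,1]} X_u = \min_{0 \le i \le n} S_i$ and the last-argmin time $A$ equals $\tau_{i^\star}$ with $i^\star := \sup\{i : S_i = \min_j S_j\}$; the case $c < 0$ is symmetric after time-reversing the walk. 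In either case $A$ is a measurable function of the jump times and the embedded walk.

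Denisov's decomposition for random walks (Theorem \ref{DenisovRW}) then yields that, given $i^\star = a$, the pre- and post-$a$ walk fragments are independent with the explicit laws stated there. Because linear interpolation between successive jump times is deterministic on each fragment (given the $\tau_i$'s and $c$), this conditional independence lifts to conditional independence of
$$\left(X_{(A-t)-} - \inf_u X_u;\ 0 \le t \le A\right) \quad \mbox{and} \quad \left(X_{A+t} - \inf_u X_u;\ 0 \le t \le 1-A\right)$$
given $A$, first under the conditioning on $(N_1, \tau_1, \ldots, \tau_n)$ and then unconditionally after integrating out that conditioning.

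I expect the main technical wrinkle to be the sign subcases: when $c < 0$ the infimum on $[0,1]$ can be approached only as a left limit at a jump, so the set $\{s : X_s = \inf_u X_u\}$ need not be immediately identifiable with the walk minimum. A short argument — replacing $X_s$ by $X_{s-}$ on that exceptional event, and checking that the pre-/post-$A$ decomposition is insensitive to this change — should be enough to align the $\sup$-definition of $A$ in the statement with the walk-minimum description used in the reduction. Once these boundary cases are catalogued, the reduction to Theorem \ref{DenisovRW} produces no further obstruction.
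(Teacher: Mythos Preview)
Your reduction matches the paper's one-sentence proof exactly: Theorem~\ref{M78} handles every L\'evy process except compound Poisson with drift, and the remaining case is dispatched via Denisov's decomposition for random walks (Theorem~\ref{DenisovRW}).

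One point deserves more care than you give it. The step ``then unconditionally after integrating out that conditioning'' is not automatic: conditional independence of the two fragments given $(A, N_1, \tau)$ does not in general imply conditional independence given $A$ alone, because $(N_1,\tau)$ is itself a measurable function of the very path fragments you are trying to decouple (trivially, any $P,Q$ are conditionally independent given $(P,Q)$). What makes the step valid here is the extra structure you have not invoked: the jump-time data splits as $\tau\cap[0,A]$ and $\tau\cap(A,1]$, the Denisov conditional law of each fragment depends only on its own half of that split, and the two halves are conditionally independent given $A$ by the Poisson property. A cleaner alternative is to skip the conditioning on jump times altogether and argue directly from independent increments that the event $\{A\in da\}$ factors as a pre-$a$ condition times a post-$a$ condition --- this is essentially Millar's own argument, and it goes through for compound Poisson with drift once the ``last argmin'' convention resolves ties. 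Your catalogue of $c$-sign wrinkles is otherwise on target.
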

With Corollary \ref{haha}, it is easy to adapt the argument of Proposition \ref{Markov} to prove that $(\alpha^X_t; t \geq 0)$ is a time-homogeneous Markov process. 

Now we turn to the stable L\'evy process. Let $(X_t; t \geq 0)$ be a stable L\'evy process with parameters $(\alpha,\beta)$, and neither $X$ nor $-X$ is a subordinator. It is well known that $0$ is regular for the reflected process $X- \underline{X}$. So It\^o's excursion theory can be applied to the process $X- \underline{X}$, see Sharpe \cite{Sharpe} for background on excursion theory of Markov processes. 

Let ${\bf n}(d\epsilon)$ be the It\^o measure of excursions of $X-\underline{X}$ away from $0$. Monrad and Silverstein \cite{MS79} computed the law of lifetime $\zeta$ of excursions under ${\bf n}$:
\begin{equation}
{\bf n}(\zeta > t) = c \frac{t^{\rho-1}}{\Gamma (\rho)}  \quad \mbox{and} \quad {\bf n}(\zeta \in dt) = c(1-\rho) \frac{t^{\rho-2}}{\Gamma(\rho)}
\end{equation}
for some contant $c>0$. Following Remark \ref{jremark}, we have:
\begin{proposition}
Let $(X_t; t \geq 0)$ be a stable L\'evy process with parameters $(\alpha,\beta)$, and neither $X$ nor $-X$ is a subordinator. Then the jump rate of the argmin process $\alpha^X$ per unit time from $x \in (0,1)$ to $1$ is given by
\begin{equation}
\mu^{\uparrow 1}(x) = \frac{1 - \rho}{1-x} \quad \mbox{for}~0 < x <1.
\end{equation}
\end{proposition}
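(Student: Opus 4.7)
The plan is to mimic the excursion-theoretic derivation of the Brownian jump rate given in Remark \ref{jremark}, with It\^o's measure ${\bf n}$ for the reflected process $X - \underline{X}$ in place of the $\tfrac{1}{2}$-stable L\'evy measure $\Lambda$. Since neither $X$ nor $-X$ is a subordinator, $0$ is regular for the reflected process, so excursion theory is available in standard form, and Corollary \ref{haha} supplies the analogue of Denisov's decomposition needed to control the conditional law of the post-$\alpha_0$ path.

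The first step is to argue that conditioning on $\alpha_0 = x \in (0,1)$ forces time $0$ to lie in the descending ladder set of the past-minimum process of $X$, with an excursion of $X - \underline{X}$ above the running infimum currently in progress and having already achieved age $1-x$. This is because the right-hand piece of length $1-x$ in Corollary \ref{haha} is the initial segment of a L\'evy meander obtained via a space-time shift, which by the Chaumont--Doney/Uribe Bravo construction is precisely the residual fragment of an excursion of lifetime strictly exceeding $\widetilde{x} := 1-x$.

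Granted this identification, the argmin process jumps from $x$ to $1$ within $(0,dt]$ precisely when the excursion in progress terminates within $dt$. So the jump rate is the hazard rate of $\zeta$ under ${\bf n}$ evaluated at $\widetilde{x}$:
\begin{equation*}
\mu^{\uparrow 1}(x) \, dt \;=\; \frac{{\bf n}(\zeta \in \widetilde{x} + dt)}{{\bf n}(\zeta > \widetilde{x})}.
\end{equation*}
Plugging in the Monrad--Silverstein formulas quoted immediately above the proposition, the constant $c$ and the $\Gamma(\rho)$ factor cancel, and one reads off
\begin{equation*}
\mu^{\uparrow 1}(x) \;=\; \frac{c(1-\rho)\,\widetilde{x}^{\rho-2}/\Gamma(\rho)}{c\,\widetilde{x}^{\rho-1}/\Gamma(\rho)} \;=\; \frac{1-\rho}{1-x}.
\end{equation*}

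The only nontrivial point is the rigorous identification of the excursion in progress at time $0$ given $\alpha_0 = x$; the rest is a one-line hazard-rate computation. In the Brownian setting this identification is transparent because of the explicit meander law together with the regenerative structure at ladder times, while for stable L\'evy processes it rests jointly on Corollary \ref{haha} and on the L\'evy meander construction of Chaumont--Doney as sharpened by Uribe Bravo. I therefore expect this to be the main obstacle; once it is in place, the Markov property of ${\bf n}$ conditioned on $\{\zeta > \widetilde{x}\}$ makes the rest automatic.
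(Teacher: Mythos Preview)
Your proposal is correct and follows exactly the approach the paper takes: the paper simply says ``Following Remark \ref{jremark}, we have'' and states the proposition, so the entire proof is the hazard-rate computation you wrote down, with the Monrad--Silverstein tail ${\bf n}(\zeta > t) = c\,t^{\rho-1}/\Gamma(\rho)$ replacing the Brownian $\Lambda(\widetilde{x},\infty)$. One minor wording issue: it is not time $0$ but the argmin time $x$ (equivalently, time $0$ of the post-$\alpha_0$ process $(X_{x+s}-X_x;\,s\ge 0)$) that is the ladder time from which the excursion of age $\widetilde{x}=1-x$ is in progress; your subsequent computation already uses this correctly.
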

Finally, by doing similar calculations as in Section \ref{s35}, we obtain the Feller transition semigroup \eqref{Qtrans} for $\alpha^X$. 
%%%%%%%%%%%%%%%%%%%%%%%%%%%%%%%%%%%%%%%%%%%%%%%%%%%%%%%%%%%%%%%%%%%
%%                                                               %%
%% Use the two commands below for producing your bibliography    %%
%% with bibtex, then comment again the commands and include the  %%
%% content of the .bbl file in this file below the commands.     %%
%%                                                               %%
%%%%%%%%%%%%%%%%%%%%%%%%%%%%%%%%%%%%%%%%%%%%%%%%%%%%%%%%%%%%%%%%%%%

%\bibliographystyle{amsplain}
%\bibliography{yourbibfilename}

% add below the content of your .bbl file produced by bibtex.

%%%%%%%%%%%%%%%%%%%%%%%%%%%%%%%%%%%%%%%%%%%%%%%%%%%%%%%%%%%%%%%%%%%
%%                                                               %%
%% You may add acknowledgments (optional).                       %%
%%                                                               %%
%%%%%%%%%%%%%%%%%%%%%%%%%%%%%%%%%%%%%%%%%%%%%%%%%%%%%%%%%%%%%%%%%%%

\ACKNO{We thank two anonymous referees for their careful reading and valuable suggestions.}

%%%%%%%%%%%%%%%%%%%%%%%%%%%%%%%%%%%%%%%%%%%%%%%%%%%%%%%%%%%%%%%%%%%
%%                                                               %%
%% You have reached the end of your document.                    %%
%%                                                               %%
%%%%%%%%%%%%%%%%%%%%%%%%%%%%%%%%%%%%%%%%%%%%%%%%%%%%%%%%%%%%%%%%%%%

\end{document}